\newtheorem{theorem}{Theorem}[section]
\newtheorem{proposition}[theorem]{Proposition}
\newtheorem{corollary}[theorem]{Corollary}
\newtheorem{lemma}[theorem]{Lemma}
\theoremstyle{definition}
\newtheorem{definition}[theorem]{Definition}
\newtheorem{example}[theorem]{Example}
\newtheorem{remark}[theorem]{Remark}
\newtheorem{notation}[theorem]{Notation}
\theoremstyle{problem}
\newcommand{\Aut}{\mathrm{Aut}}
\newcommand{\Opp}{\mathrm{Opp}}
\newcommand{\RR}{\mathbf{R}}
\newcommand{\ZZ}{\mathbf{Z}}
\newcommand{\QQ}{\mathbf{Q}}
\newcommand{\Ch}{\mathrm{Ch}}
\newcommand{\St}{\mathrm{St}}
\newcommand{\Stab}{\mathrm{Stab}}
\newcommand{\Fix}{\mathrm{Fix}}
\newcommand{\bd}{\partial}
\newcommand{\diag}{\operatorname{diag}}
\newcommand{\id}{\operatorname{id}}
\newcommand{\SO}{\operatorname{SO}}
\newcommand{\SL}{\operatorname{SL}}
\newcommand{\dist}{\operatorname{dist}}
\def\og{\leavevmode\raise.3ex\hbox{$\scriptscriptstyle\langle\!\langle$~}}
\def\fg{\leavevmode\raise.3ex\hbox{~$\!\scriptscriptstyle\,\rangle\!\rangle$}}
\title{A unified proof of the Howe--Moore property}
\author[1]{Corina Ciobotaru\thanks{Supported  by the FRIA; corina.ciobotaru@uclouvain.be}}
\date{First draft: March 2, 2014; Accepted: May 20, 2014}
\begin{document}

\maketitle

\begin{abstract}
We provide a unified proof of all known examples of locally compact 
groups that enjoy the Howe--Moore property, namely, 
the vanishing at infinity of all matrix coefficients 
of the group unitary representations that are without 
non-zero invariant vectors. These examples are: 
connected, non-compact, simple real Lie groups with finite center,  
isotropic simple algebraic groups over non Archimedean 
local fields and closed, topologically simple subgroups 
of $\Aut(T)$ that act $2$--transitively on the boundary 
$\partial T$, where $T$ is a bi-regular tree with valence 
$\geq 3$ at every vertex.  
\end{abstract}

\renewcommand{\thefootnote}{}
\footnotetext{\textit{MSC classification: 22D10, 20E42.} }
\footnotetext{\textit{Keywords:} Unitary representations, groups acting on Euclidean buildings, the Howe--Moore property.}
\newcounter{qcounter}

\section{Introduction}

For a locally compact group $G$, the Howe--Moore property asserts that all matrix coefficients of its unitary representations, that are without non-zero $G$--invariant vectors, vanish at infinity. This property was first established by Howe and Moore~\cite{HM79} and Zimmer~\cite{Zim84}, around 1977, for connected, non-compact, simple real Lie groups that are with finite center. It plays an important role in the Mostow rigidity theorem, as well as in various other rigidity results. The original proof of the Howe--Moore property uses the geometry of semi-simple real Lie groups. Nowadays the proof is more algebraic (see for example Bekka and Mayer~\cite{BM} or Morris~\cite[Theorem~10.14]{WM} for the case of $\SL(2,\mathbb{R})$). 

In the same article, Howe and Moore also treat the case of isotropic simple algebraic groups over non Archimedean local fields (see Howe and Moore~\cite[Theorem~5.1]{HM79} and Definition~\ref{def::alg_groups}). By Bruhat and Tits~\cite{BT72}, to such a group one associates a locally finite thick Euclidean building $\Delta$ where the group acts continuously and properly, by type-preserving automorphisms and strongly transitively (for strong transitivity see Definition~\ref{def::strong_trans_action}). For other totally disconnected analogs of semi-simple real Lie groups, namely, closed, strongly transitive subgroups of $\Aut(\Delta)$, where $\Delta$ is a locally finite thick Euclidean building, the study of the Howe--Moore property was initiated by Lubotzky and Mozes~\cite{LM92} in the particular case of $\Delta$ being a $d$--regular tree $T_d$, with $d \geq 3$. Using the geometry of horocycles and horoballs, they prove that the index-two subgroup of $\Aut(T_{d})$, which preserves the $2$--coloring of the tree $T_{d}$, enjoys this property. A more general result, in the context of $d$--regular trees, was obtained by Burger and Mozes~\cite{BM00a}. There they show that every closed, topologically simple subgroup of $\Aut(T_{d})$ which is $2$--transitive on the boundary has the Howe--Moore property. We mention that the latter class of subgroups of $\Aut(T_d)$ contains also non linear examples. For completeness, we add the known fact that in the case of a thick tree $T$, the strongly transitive action of a subgroup of $\Aut(T)$ on $T$ is equivalent to the $2$--transitive action of that group on the boundary of $T$ (see for example Caprace and Ciobotaru~\cite[Theorem~1.1 and Corollary~3.6]{CaCi}); in particular, the latter mentioned equivalence implies that $T$ is a bi-regular tree.

\medskip
Regarding all examples presented above, this article proposes to give a unified proof of the Howe--Moore property:

\begin{theorem}(See Theorem~\ref{thm::H-M-buildings} and Section~\ref{subsec::real_Lie_groups}) 
\label{thm::H-M-buildings_2}
Let $G$ be a connected, non-compact, simple real Lie group, with finite center, or an isotropic simple algebraic group over a non Archimedean local field, or a closed, topologically simple subgroup of $\Aut(T)$ that acts $2$--transitively on the boundary $\partial T$, where $T$ is a bi-regular tree with valence $ \geq 3$ at every vertex. Then $G$ admits the Howe--Moore property.
\end{theorem}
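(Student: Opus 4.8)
The plan is to treat the three families uniformly through one geometric object. In each case $G$ acts continuously, properly and \emph{strongly transitively} by type-preserving automorphisms on a locally finite thick Euclidean building $\Delta$: the Bruhat--Tits building for the non-Archimedean algebraic groups, and the bi-regular tree $T=\Delta$ itself for the $\Aut(T)$ case, where by \cite{CaCi} strong transitivity on $T$ is exactly $2$-transitivity on $\partial T$. (The real Lie case is formally the same argument run on the symmetric space through the Cartan decomposition, and is dealt with separately in Section~\ref{subsec::real_Lie_groups}.) Strong transitivity furnishes a Cartan decomposition $G=KA^{+}K$, with $K=\Stab_G(x_0)$ the compact stabilizer of a special vertex $x_0$ and $A^{+}$ the sub-semigroup of a fixed apartment translating $x_0$ into a fixed sector; in particular a sequence $g_n$ leaves every compact subset of $G$ if and only if its $A^{+}$-parts do.

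First I would reduce the statement. Let $\pi$ be a unitary representation of $G$ on $\mathcal H$ with no nonzero invariant vector. By polarization it suffices to prove that each diagonal coefficient $g\mapsto\langle\pi(g)v,v\rangle$ lies in $C_0(G)$; and given $g_n\to\infty$, writing $g_n=k_na_nk_n'$ and passing to a subsequence with $k_n\to k$, $k_n'\to k'$ (compactness of $K$) and $a_n\to\infty$ in $A^{+}$, the problem becomes: show that $\langle\pi(a_n)x,y\rangle\to 0$ for all $x,y\in\mathcal H$ and every sequence $a_n\to\infty$ in $A^{+}$. The engine is the contraction (``Mautner'') lemma: if $\pi(a_n)x\rightharpoonup x_\infty$ weakly along a subsequence, then $x_\infty$ is fixed by the contraction group $\{g\in G:\ a_n^{-1}ga_n\to e\}$; since $a_n\to\infty$ forces some root $\alpha$ to have $\alpha(a_n)\to\infty$, this contraction group contains the corresponding nontrivial, non-compact horospherical subgroup (unipotent radical of a parabolic, resp. the pointwise fixator of a horoball in the tree case). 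Thus every weak limit point of $\pi(a_n)x$ lies in $\mathcal H^{U}$ for some nontrivial horospherical subgroup $U$. Everything therefore reduces to the rigidity statement, which is the heart of the matter: \emph{if $\mathcal H^{U}\neq 0$ for a nontrivial horospherical subgroup $U\leq G$, then $\mathcal H^G\neq 0$.} Granting it, absence of $G$-invariant vectors yields $\mathcal H^{U}=0$ for every such $U$, hence $\pi(a_n)x\rightharpoonup 0$ and $\langle\pi(a_n)x,y\rangle\to 0$.

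To prove the rigidity statement I would argue in two stages. When $\Delta$ has rank $\geq 2$ one propagates along the connected relative root system: a nonzero $U_\alpha$-fixed vector produces, via the rank-one case applied to $\langle U_\alpha,U_{-\alpha}\rangle$, a nonzero $\langle U_\alpha,U_{-\alpha}\rangle$-fixed vector $w$; then for each root $\beta$ non-orthogonal to $\alpha$ one picks a torus element $t$ with $\beta(t)>1$, so $w$ (being $t$-fixed) is $U_\beta$-fixed by the contraction lemma; iterating along a path in the root system, $w$ is fixed by every root subgroup, hence by the subgroup they generate, which is $G$ up to compact and central factors. This leaves the rank-one groups — $\mathrm{SL}_2$ over a local field and the closed $2$-transitive subgroups of $\Aut(T)$ — as the crux. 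There $\mathcal H^{U^{+}}$ is invariant under the minimal parabolic $B^{-}=N_G(U^{+})$, and $\pi|_{B^{-}}$ on $\mathcal H^{U^{+}}$ factors through $B^{-}/U^{+}$, an extension of the torus $A$ by a compact group; if this representation has a nonzero $A$-fixed vector $w$, then $w$ is fixed by $U^{+}$ and $A$, hence by the contraction lemma applied to $a^{-n}$ also by $U^{-}$, hence by $G$ since $G=\overline{\langle U^{+},U^{-},A\rangle}$ (this last identity is precisely where topological simplicity together with strong transitivity, equivalently $2$-transitivity on $\partial T$, is used, via the $BN$-pair). Otherwise one manufactures such a vector as a nonzero weak limit of translates $\pi(a^{-n})w$ with $w\in\mathcal H^{U^{+}}$, which is automatically $U^{+}$-fixed and, by the contraction lemma, also $U^{-}$-fixed, hence $G$-invariant.

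The main obstacle I expect is exactly this last point in the rank-one case: guaranteeing that the weak limit just produced is nonzero — the recurrent ``no escape of mass'' difficulty. I would handle it by arranging the subsequences so that the weak-operator limit of $\pi(a^{n})$ (respectively $\pi(a^{-n})$) is a nonzero operator commuting with $\pi(A)$ and behaving like a projection, or equivalently by passing to the spectral measure of the single hyperbolic element $a$ and separating off its atomic part. The remaining points are of routine nature: the higher-rank bookkeeping in the root-propagation step, and, in the tree case, identifying the relevant horospherical subgroups as pointwise fixators of horoballs and checking that $U^{+}$, $U^{-}$ and $A$ topologically generate $G$; both follow once the building-theoretic dictionary is in place.
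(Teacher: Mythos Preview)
Your reduction to sequences in $A^{+}$ and your use of Mautner's lemma match the paper, but the argument diverges precisely at the step you yourself flag as the obstacle, and the fix you propose does not close the gap.

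The difficulty is this: having produced a nonzero $w\in\mathcal H^{U^{+}}$, you look at weak limits of $\pi(a^{-n})w$ and want one of them to be nonzero. Your spectral-measure suggestion would succeed only if $\pi(a)\vert_{\mathcal H^{U^{+}}}$ had an eigenvector (an atom in the spectral measure), but nothing you have arranged rules out a purely continuous spectrum on $\mathcal H^{U^{+}}$. Likewise, ``arranging the weak-operator limit to behave like a projection'' is a hope, not an argument: a weak-operator limit of unitaries can perfectly well vanish on the subspace you care about. So the implication $\mathcal H^{U^{+}}\neq 0\Rightarrow\mathcal H^{G}\neq 0$ is left unproved.

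The paper sidesteps this implication entirely. It works directly with the weak \emph{operator} limit $E$ of $\{\pi(a_n)\}$, and the decisive observation (Lemma~\ref{vN_normal}) is that because the $a_n$ lie in the \emph{abelian} semigroup $A^{+}$, the unitaries $\pi(a_n)$ pairwise commute, and any weak limit of pairwise commuting unitaries is \emph{normal}. One then checks $\pi(u_{+})E=E$ for $u_{+}\in U^{+}_{\alpha}$ and $E\pi(u_{-})=E$ for $u_{-}\in U^{-}_{\alpha}$, so vectors in $E(\mathcal H)$ are $U^{+}_{\alpha}$-fixed and vectors in $E^{*}(\mathcal H)$ are $U^{-}_{\alpha}$-fixed. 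Normality gives $E^{*}E=EE^{*}\neq 0$ (since $E\neq 0$), hence $E(\mathcal H)\cap E^{*}(\mathcal H)\neq\{0\}$, and any nonzero vector there is $\langle U^{+}_{\alpha},U^{-}_{\alpha}\rangle$-invariant in one stroke. This is exactly the mechanism that prevents escape of mass, and it is the idea missing from your sketch.

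A secondary remark: your higher-rank step via root propagation and reduction to rank one is more elaborate than needed. The paper shows directly (Corollary~\ref{coro::gen_parab_subgroups}, via the geometric Levi decomposition) that $\overline{\langle U^{+}_{a},U^{-}_{a}\rangle}$ is \emph{normal} in $G$ for every hyperbolic $a$; simplicity of $G$ then forces $G=\overline{\langle U^{+}_{a},U^{-}_{a}\rangle}$ at once, with no rank-one input required.
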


The main ingredients of the unified proof, are: 
\begin{list}{\roman{qcounter})~}{\usecounter{qcounter}}
\item
the \textbf{$K_1A^{+}K_2$ decomposition} of these groups, where $K_1,K_2$ are compact subsets and $A^{+}$ is the `maximal abelian' sub semi-group part (see Section \ref{sec::unif_examples});
\item 
the \textbf{`contraction'} subgroups $U^{\pm}_{\alpha}$ (see Definition~\ref{def::contr_groups}) with respect to hyperbolic elements, combined with basic facts regarding normal operators (see Sections \ref{subsec::Mautner}, \ref{subset::normal_op_weak_lim}). 
\end{list} More precisely, the above mentioned ingredients are assembled to obtain the following criterion, used to verify the Howe--Moore property:

\begin{theorem}(See Theorem~\ref{Crit_H_M})
\label{thm::Crit_H_M}
Let $G$ be a second countable, locally compact group having the following properties:
\begin{list}{\roman{qcounter})~}{\usecounter{qcounter}}
\item
$G$ admits a decomposition $K_1A^{+}K_2$, where $K_1,K_2 \subset G$ are compact subsets and $A^{+}$ is an abelian sub semi-group of $G$;
\item 
every sequence $\alpha=\{g_{n}\}_{n>0} \subset A^{+}$, with $g_{n} \rightarrow \infty$,  admits a subsequence $\beta=\{g_{n_k}\}_{n_k>0}$ such that  $G= \overline{ \left\langle U^{+}_{\beta},U^{-}_{\beta}\right\rangle }$.
\end{list}

Then $G$ has  the Howe--Moore property. 
\end{theorem}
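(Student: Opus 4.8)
The plan is to run the classical Howe--Moore argument --- weak operator limits of unitary operators combined with a Mautner--type phenomenon --- using hypothesis (i) to pass from $G$ to $A^{+}$, and hypothesis (ii) together with normality of the relevant weak limit to conclude. Throughout, fix a unitary representation $\pi$ of $G$ on a Hilbert space $\mathcal H$ with no non-zero $G$--invariant vectors, and fix $v,w\in\mathcal H$; we must show the matrix coefficient $g\mapsto\langle\pi(g)v,w\rangle$ vanishes at infinity. Replacing $\mathcal H$ by the closed $G$--invariant subspace spanned by the orbits of $v$ and $w$ --- which is separable since $G$ is second countable, and still has no non-zero invariant vectors --- I may assume $\mathcal H$ is separable.

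The first reduction is to $A^{+}$. Given $g_{n}\to\infty$ in $G$, write $g_{n}=k_{n}a_{n}k_{n}'$ with $k_{n}\in K_{1}$, $k_{n}'\in K_{2}$, $a_{n}\in A^{+}$; after passing to a subsequence, $k_{n}\to k$ and $k_{n}'\to k'$, and then $a_{n}\to\infty$ (else $g_{n}$ would have a bounded subsequence). Since $\|\pi(a_{n})\|=1$ and $\pi(k_{n}')v\to\pi(k')v$, $\pi(k_{n}^{-1})w\to\pi(k^{-1})w$ strongly, one gets $\langle\pi(g_{n})v,w\rangle-\langle\pi(a_{n})\pi(k')v,\pi(k^{-1})w\rangle\to0$. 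By the standard sub-subsequence argument it therefore suffices to prove: for all $x,y\in\mathcal H$ and every $\{a_{n}\}\subset A^{+}$ with $a_{n}\to\infty$, one has $\langle\pi(a_{n})x,y\rangle\to0$.

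Suppose not: there exist $x,y$, a sequence $\{a_{n}\}\subset A^{+}$ with $a_{n}\to\infty$, and $\varepsilon>0$ with $|\langle\pi(a_{n})x,y\rangle|\geq\varepsilon$ along a subsequence. Apply hypothesis (ii) to this subsequence to extract $\beta=\{h_{i}\}$ with $G=\overline{\langle U^{+}_{\beta},U^{-}_{\beta}\rangle}$; since the contraction subgroups only enlarge when $\beta$ is replaced by a subsequence, I may further pass to a subsequence (using separability, hence sequential compactness of the unit ball of $B(\mathcal H)$ in the weak operator topology) so that $\pi(h_{i})\to T$ weakly, with $\|T\|\leq1$ and $T\neq0$. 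I would then record three facts about $T$. \textbf{(a)} $T$ is normal: the $\pi(h_{i})$ pairwise commute because $A^{+}$ is abelian, so $T$ lies in the abelian von Neumann algebra they generate; hence $\ker T=\ker T^{*}$ and $\overline{\im T}=\overline{\im T^{*}}=(\ker T)^{\perp}$. \textbf{(b)} For $u\in U^{+}_{\beta}$ (say, $h_{i}uh_{i}^{-1}\to e$), the identity $\pi(h_{i})\pi(u)=\pi(h_{i}uh_{i}^{-1})\pi(h_{i})$ together with $\pi(h_{i}uh_{i}^{-1})\to I$ and $\pi(h_{i}u^{-1}h_{i}^{-1})\to I$ strongly (both $u,u^{-1}\in U^{+}_{\beta}$) yields, in the weak limit, $T\pi(u)=T$; symmetrically, using $\pi(h_{i})^{*}\to T^{*}$ weakly and the analogous relation for $u\in U^{-}_{\beta}$, one gets $T^{*}\pi(u)=T^{*}$. \textbf{(c)} From $T\pi(u)=T$ we get $\im(\pi(u)-I)\subseteq\ker T$ for $u\in U^{+}_{\beta}$, hence $\overline{\im T^{*}}=(\ker T)^{\perp}\subseteq\mathcal H^{U^{+}_{\beta}}$, the subspace of $U^{+}_{\beta}$--fixed vectors; similarly $\overline{\im T}\subseteq\mathcal H^{U^{-}_{\beta}}$.

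Combining (a) and (c), $\overline{\im T}=\overline{\im T^{*}}\subseteq\mathcal H^{U^{+}_{\beta}}\cap\mathcal H^{U^{-}_{\beta}}=\mathcal H^{\langle U^{+}_{\beta},U^{-}_{\beta}\rangle}=\mathcal H^{\overline{\langle U^{+}_{\beta},U^{-}_{\beta}\rangle}}=\mathcal H^{G}=\{0\}$, where the middle equalities use that a vector stabilizer is a closed subgroup and hypothesis (ii), and the last uses that $\pi$ has no non-zero invariant vectors. Thus $T=0$, contradicting $T\neq0$. The main obstacle is exactly step (a), and it is where hypothesis (i) forcing $A^{+}$ to be \emph{abelian} is essential: without commutativity of the $\pi(h_{i})$ the weak limit $T$ need not be normal, and then one only obtains $T=P^{-}TP^{+}$ for the orthogonal projections $P^{\pm}$ onto $\mathcal H^{U^{\pm}_{\beta}}$, which does not force $T=0$. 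A secondary point is the bookkeeping of nested subsequences, which is harmless because $U^{\pm}_{\beta}\subseteq U^{\pm}_{\beta'}$ for every subsequence $\beta'$ of $\beta$.
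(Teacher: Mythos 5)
Your proposal is correct and follows essentially the same route as the paper's proof: reduce to a separable Hilbert space, use the $K_1A^{+}K_2$ decomposition to pass to a sequence in $A^{+}$, take a weak operator limit $T$ of the $\pi(h_i)$ which is normal by commutativity, establish the Mautner-type identities $T\pi(u)=T$ and $T^{*}\pi(u)=T^{*}$, and use normality plus hypothesis (ii) to force a contradiction with the absence of invariant vectors. The only differences are cosmetic (ending with $\overline{\im T}\subseteq\mathcal H^{G}=\{0\}$ rather than exhibiting a non-zero invariant vector in $E(\mathcal H)\cap E^{*}(\mathcal H)$, and a swapped $\pm$ labeling convention), so nothing further is needed.
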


The idea behind this criterion is inspired by a proof given for $\SL (2, \mathbb{R})$ by Dave Witte Morris~\cite[Theorem~10.14]{WM}.

To the best of our knowledge, there are no other known examples of locally compact or even discrete groups enjoying the Howe--Moore property, beside those covered by the unified proof given in this article. Some necessary algebraic conditions for a locally compact group admitting the Howe--Moore property are described in Cluckers--de Cornulier--Louvet--Tessera--Valette~\cite{CCL+}. Those conditions are quite elementary, but probably far from being sufficient. Even for the two classes of totally disconnected locally compact groups of Theorem~\ref{thm::H-M-buildings_2}, we still do not know to what extent the Howe--Moore property relates with the $KA^{+}K$ decomposition and with the strong transitivity on the corresponding Euclidean building; in this direction, only partial results have been obtained so far.

\section{Preliminaries}

\subsection{Basic definitions}

In what follows all Hilbert spaces are assumed to be complex, not necessarily separable, and their inner product is denoted by $\left\langle \cdot ,  \cdot \right\rangle $. Let $B(\mathcal{H})$ be the space of all bounded linear operators $ F: \mathcal{H} \rightarrow \mathcal{H}$. 

Recall that the closed unit ball of a Hilbert space  $\mathcal{H}$ is compact with respect to the weak topology and moreover, by the Eberlein--\v Smulian Theorem, compactness is equivalent to sequential compactness. Hence, any sequence of vectors $\{v_n\}_{n\geq 0} \subset \mathcal{H}$, with their norms being bounded above, contains a \textbf{weakly convergent} subsequence $ \{v_{n_{k}}\}_{k>0}$ in $\mathcal{H}$. Namely, there exists a vector $v \in \mathcal{H}$ such that for any $w \in \mathcal{H}$ one has that $\lim\limits_{n_k \rightarrow \infty} \left\langle v_{n_{k}} , w \right\rangle =\left\langle v , w \right\rangle$.

In the same spirit, if $\mathcal{H}$ is a separable Hilbert space, it is well-known that the closed unit ball in $B(\mathcal{H})$ is compact and metrizable with respect to the weak operator topology. Thus, any sequence of operators $\{F_n\}_{n\geq 0} \subset B(\mathcal{H})$, with their operator norms being bounded above, contains a \textbf{weakly convergent} subsequence $ \{F_{n_{k}}\}_{k>0}$ in $B(\mathcal{H})$. Namely, there exists $F$ in $B(\mathcal{H})$ such that for all $v,w \in \mathcal{H}$ one has that $\lim\limits_{n_{k} \rightarrow \infty} \left\langle F_{n_{k}}v , w \right\rangle =\left\langle Fv , w \right\rangle$

\begin{definition}
Let $\mathcal{H}$ be a Hilbert space. We say that $E \in B(\mathcal{H})$ is \textbf{normal} if $E^{*}E=EE^{*}$. It is well-known that this is equivalent to $\left\langle EE^{*}v, v \right\rangle =\left\langle E^{*}Ev, v \right\rangle $, for every $v \in \mathcal{H}$. Moreover, $U \in B(\mathcal{H})$ is called \textbf{unitary} if $UU^{*}=U^{*}U=I$ or, equivalently, if $\left\langle U \xi,U \eta \right\rangle =\left\langle \xi, \eta \right\rangle $ for all $\xi, \eta \in \mathcal{H}$ and $U$ is onto. We denote by $\mathcal{U}(\mathcal{H})$ the group of all unitary operators of $\mathcal{H}$.
\end{definition}


\begin{definition}
A \textbf{unitary representation} of a topological group $G$ into a Hilbert space $\mathcal{H}$ is a group homomorphism $\pi : G \rightarrow \mathcal{U}(\mathcal{H})$, which is moreover strongly continuous: the map $g \in G \mapsto \pi(g) v \in \mathcal{H}$ is continuous for every $v \in \mathcal{H}$. Instead of $\pi : G \rightarrow \mathcal{U}(\mathcal{H})$ we often write $(\pi, \mathcal{H})$ for a unitary representation of $G$.

Furthermore, any two vectors $v, w \in \mathcal{H}$ define a continuous function $c_{v, w} : G \rightarrow \mathbb{C}$, given by $c_{v, w}(g): = \left\langle \pi(g)v , w \right\rangle$ and we call  it the associated \textbf{$( v, w)$--matrix coefficient}. Moreover, we say that $c_{v, w}$ \textbf{is} $\mathbf{\mathrm{C}_0}$ if for every $\epsilon >0$, the subset $\{ g \in G \; : \; |c_{v, w}(g)| \geq \epsilon\}$ is compact in $G$.
\end{definition}

\begin{remark}(See Bekka, de la Harpe and Valette~\cite[Proposition~C.4.9]{BHV})
\label{rem::cyclic_rep}
We mention that for a unitary representation $(\pi, \mathcal{H})$ of a topological group $G$, the Hilbert space $\mathcal{H}$ can be decomposed as a direct sum $\mathcal{H}=\bigoplus_{i} \mathcal{H}_{i}$ of mutually orthogonal, closed and $G$--invariant subspaces $\mathcal{H}_{i}$, such that the restriction of $\pi$ to $\mathcal{H}_{i}$ is cyclic for every $i$ (i.e., for every $i$ there exists a non-zero vector $v \in \mathcal{H}_{i}$ such that $\pi(G)v$ is dense in $\mathcal{H}_{i}$).
\end{remark}

Moreover, the following well-known lemma asserts that for `nice' topological groups, only separable Hilbert spaces can be considered when working with unitary representations. For the convenience of the reader, we include its proof.
\begin{lemma}
\label{lem::sep_loc_compact}
Let $G$ be a separable locally compact group. Then for every cyclic unitary representation $(\pi, \mathcal{H})$ of $G$, the corresponding Hilbert space $ \mathcal{H}$ is separable. In particular, all irreducible unitary representations of $G$ are over separable Hilbert spaces.
\end{lemma}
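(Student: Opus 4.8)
The plan is to prove Lemma~\ref{lem::sep_loc_compact} by exhibiting an explicit countable dense subset of $\mathcal{H}$, built from a countable dense subset of $G$ together with the cyclic vector. First I would fix a cyclic vector $v \in \mathcal{H}$, so that $\pi(G)v$ is dense in $\mathcal{H}$, and fix a countable dense subset $D \subseteq G$, which exists by hypothesis. The natural candidate for a countable dense set in $\mathcal{H}$ is the set of all finite $\QQ$-linear (more precisely, $\QQ + i\QQ$-linear) combinations of vectors of the form $\pi(g)v$ with $g \in D$; call this countable set $S$. The goal is then to show $\overline{S} = \mathcal{H}$.

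The argument proceeds in two reduction steps. Since $\pi(G)v$ is dense in $\mathcal{H}$, it suffices to show that every vector $\pi(g)v$, for arbitrary $g \in G$, lies in $\overline{S}$; and since finite $\QQ$-linear combinations are dense in finite $\RR$- or $\CC$-linear combinations (a routine density fact), it suffices to show $\pi(g)v \in \overline{\pi(D)v}$ for every $g \in G$. This last containment is exactly where strong continuity of $\pi$ enters: given $g \in G$, pick a sequence $g_n \in D$ with $g_n \to g$ in $G$, which is possible because $D$ is dense; then by strong continuity the map $h \mapsto \pi(h)v$ is continuous, so $\pi(g_n)v \to \pi(g)v$ in $\mathcal{H}$. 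Hence $\pi(g)v$ is a limit of elements of $\pi(D)v \subseteq S$, so $\pi(g)v \in \overline{S}$. Combining the two reductions gives $\mathcal{H} = \overline{\pi(G)v} \subseteq \overline{S}$, so $\mathcal{H}$ is separable.

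For the final sentence of the statement, I would invoke Remark~\ref{rem::cyclic_rep}: an arbitrary unitary representation decomposes as an orthogonal direct sum of cyclic subrepresentations. An irreducible unitary representation $(\pi,\mathcal{H})$ with $\mathcal{H} \neq 0$ is in particular cyclic — indeed, every non-zero vector is cyclic by irreducibility, so the decomposition is trivial and consists of a single summand — hence the first part applies directly and $\mathcal{H}$ is separable. (Strictly, local compactness is not even needed for this lemma, only separability of $G$ as a topological space; I would state it as in the paper but the proof will make clear that separability of $G$ is what is used.)

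I do not expect a genuine obstacle here: the only mild subtlety is making sure the countable scalar field $\QQ + i\QQ$ is used rather than $\CC$ so that $S$ is actually countable, and being careful that "separable group" is used in the sense of having a countable dense subset (a point of topology, not of measure theory). Everything else is a direct unwinding of the definitions of cyclicity and strong continuity, so the write-up will be short.
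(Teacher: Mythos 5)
Your proof is correct and essentially the same as the paper's: both fix a cyclic vector $v$, take a countable dense subset $Q\subseteq G$, use strong continuity of $g\mapsto \pi(g)v$ to get that $\pi(Q)v$ is dense in $\pi(G)v$, and conclude separability of $\mathcal{H}$ via (rational) linear combinations, with the irreducible case handled by cyclicity. The only minor caveat is your extraction of a sequence $g_n\to g$, which uses first countability --- harmless here since the paper works with second countable groups, but since you claim only separability of $G$ is needed, it is cleaner to argue as the paper implicitly does, via the inclusion $f(\overline{Q})\subseteq\overline{f(Q)}$ for the continuous map $f(g)=\pi(g)v$.
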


\begin{proof}
Let $(\pi, \mathcal{H})$ be a cyclic unitary representation of $G$ and denote by $v$ a cyclic vector of it. Then, we have that $f: G \rightarrow \mathcal{H}$, defined by $f(g):= \pi(g)v$, is a continuous function. Moreover, by the definition of a cyclic vector, we know that the linear span of $\pi(G)v$ is dense in $\mathcal{H}$.

As $G$ is separable, let $Q$ be a countable dense subset of it. Thus $f(Q)$ is dense in $\pi(G)v$ and therefore the linear span of $\pi(Q)v$ is  also dense in $\mathcal{H}$. The conclusion thus follows.
\end{proof}

\begin{remark}
\label{rem::exam_sep_loc_com}
The class of separable locally compact groups contains non-compact, simple real Lie groups with finite center and also closed, non-compact subgroups of $\Aut(\Delta)$, where $\Delta$ is a locally finite thick Euclidean building. In particular, all non-compact, simple $p$--adic Lie groups are separable.
\end{remark}

\subsection{The Howe--Moore property}

\begin{definition}
Let $G$ be a locally compact group and let $G \bigcup \{\infty\}$ be the one point compactification of $G$. We say that $G$ has the \textbf{Howe--Moore property} if the set of all unitary representations of $G$ is the union of the ones having non-zero $G$--invariant vectors and the ones for which all matrix coefficients are $\mathrm{C}_0$. When a matrix coefficient is $\mathrm{C}_0$, we say also that it \textbf{vanishes at $\infty$}. 
\end{definition}

Throughout this article we assume that all locally compact groups are second countable; this is to simplify our notation, by using sequences instead of nets. The second reason is given by the next remark.

\begin{remark}
\label{rem::irr_H-M}
By~\cite[Prop.~2.3]{CCL+}, to prove the Howe--Moore property for a second countable, locally compact group $G$ it is enough to verify the $C_0$ condition only for all irreducible, non-trivial, unitary representations of $G$, which, by Lemma~\ref{lem::sep_loc_compact}, are over separable Hilbert spaces.

\end{remark}

Moreover, the following well-known lemma shows that matrix coefficients not vanishing at $\infty$ give rise to particular matrix coefficients with the same property. 
\begin{lemma}
\label{v=w}
Let $G$ be a second countable, locally compact group and $(\pi, \mathcal{H})$ be a unitary representation of $G$. Suppose there exist two non-zero vectors $v, w \in \mathcal{H}$ such that the $( v, w )$--matrix coefficient does not vanish at $\infty$. Then the $( v, v)$--matrix coefficient does not vanish at $\infty$.
\end{lemma}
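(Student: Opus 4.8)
The plan is to use the assumption that the $(v,w)$--matrix coefficient $c_{v,w}$ does not vanish at infinity to extract a sequence $g_n \to \infty$ in $G$ along which $|\langle \pi(g_n)v, w\rangle|$ stays bounded below by some $\epsilon > 0$, and then to run a weak-compactness argument. First I would recall that ``$c_{v,w}$ does not vanish at $\infty$'' means there is an $\epsilon>0$ such that the set $\{g : |c_{v,w}(g)|\geq \epsilon\}$ is not relatively compact; since $G$ is second countable (hence $\sigma$-compact and metrizable), this set contains a sequence $\{g_n\}$ with $g_n\to\infty$ (i.e. eventually leaving every compact subset) and $|\langle \pi(g_n)v,w\rangle|\geq\epsilon$ for all $n$. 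The vectors $\pi(g_n)v$ all have norm $\|v\|$, so by the Eberlein--\v Smulian fact quoted in the preliminaries we may pass to a subsequence (not renamed) along which $\pi(g_n)v \rightharpoonup v_0$ weakly for some $v_0\in\mathcal H$. Taking the inner product with $w$ gives $|\langle v_0, w\rangle| = \lim |\langle \pi(g_n)v, w\rangle| \geq \epsilon > 0$, so in particular $v_0\neq 0$.

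The key step is then to show that the $(v,v_0)$--matrix coefficient, or a closely related one, fails to vanish at infinity; the cleanest route is to consider $\langle \pi(g_n^{-1} g_m)v, v\rangle$. I would compute, for fixed $m$,
\[
\langle \pi(g_m)v, \pi(g_n)v\rangle \xrightarrow[n\to\infty]{} \langle \pi(g_m)v, v_0\rangle,
\]
using weak convergence $\pi(g_n)v\rightharpoonup v_0$ and unitarity (which allows moving $\pi(g_n)$ to the other side as $\pi(g_n)^*=\pi(g_n^{-1})$, noting $\langle \pi(g_m)v,\pi(g_n)v\rangle = \langle \pi(g_n^{-1}g_m)v, v\rangle$). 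Then letting $m\to\infty$ along the sequence and again using $\pi(g_m)v\rightharpoonup v_0$ gives $\langle \pi(g_m)v, v_0\rangle \to \langle v_0, v_0\rangle = \|v_0\|^2 > 0$. Hence we can choose $m,n$ with $m,n$ large and, crucially, $g_n^{-1}g_m$ escaping to infinity, such that $|\langle \pi(g_n^{-1}g_m)v,v\rangle|\geq \|v_0\|^2/2 > 0$; this exhibits a sequence of group elements tending to infinity along which the $(v,v)$--matrix coefficient is bounded below, i.e. $c_{v,v}$ does not vanish at $\infty$.

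The main obstacle is the bookkeeping needed to guarantee that the elements $g_n^{-1}g_m$ actually tend to infinity in $G$ (rather than clustering in a compact set), while simultaneously $|\langle \pi(g_n^{-1}g_m)v,v\rangle|$ stays bounded below. One must interleave the two limiting processes carefully: fix a growing exhaustion $K_1\subset K_2\subset\cdots$ of $G$ by compact sets, and for each $j$ first choose $m_j$ large enough that $|\langle\pi(g_{m_j})v,v_0\rangle - \|v_0\|^2|$ is small, then choose $n_j$ large enough (depending on $m_j$) that $|\langle \pi(g_{m_j})v,\pi(g_{n_j})v\rangle - \langle \pi(g_{m_j})v,v_0\rangle|$ is small and $g_{n_j}^{-1}g_{m_j}\notin K_j$ — the latter is possible because, with $m_j$ fixed, $g_{n_j}\to\infty$ forces $g_{n_j}^{-1}g_{m_j}\to\infty$. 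With these choices the diagonal sequence $h_j := g_{n_j}^{-1}g_{m_j}$ satisfies $h_j\to\infty$ and $|\langle\pi(h_j)v,v\rangle|\geq \|v_0\|^2/2$ for all large $j$, which is exactly the desired conclusion.
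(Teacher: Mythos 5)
Your proposal is correct. It shares the paper's core mechanism: extract $g_n\to\infty$ with $|\langle\pi(g_n)v,w\rangle|\geq\epsilon$, use weak sequential compactness of the ball to get $\pi(g_n)v\rightharpoonup v_0\neq 0$, and then transfer non-vanishing to $c_{v,v}$ by pairing $\pi(g_n)v$ against a translate $\pi(g)v$. Where you diverge is in how the relevant translate is produced. The paper observes that $v_0$ must lie in the closed $G$--invariant subspace $\overline{\langle\pi(G)v\rangle}$ (via an orthogonal-decomposition argument), so there is a single \emph{fixed} $g\in G$ with $\langle v_0,\pi(g)v\rangle\neq 0$; then $\langle\pi(g^{-1}g_{n_k})v,v\rangle\to\langle v_0,\pi(g)v\rangle\neq 0$ and $g^{-1}g_{n_k}\to\infty$ is automatic. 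You instead test the weak limit against $v_0$ itself, getting $\langle\pi(g_m)v,v_0\rangle\to\|v_0\|^2>0$, and pair sequence elements with each other, which forces the interleaved diagonal choice of $(m_j,n_j)$ to guarantee $g_{n_j}^{-1}g_{m_j}\to\infty$. Both routes are sound; yours avoids the cyclic-subspace observation at the price of the bookkeeping, and in fact that bookkeeping is avoidable: once you know $\langle\pi(g_m)v,v_0\rangle\neq 0$ for some single $m$, you can fix that $m$ and run exactly the paper's endgame with $g=g_m$, since $\langle\pi(g_m^{-1}g_n)v,v\rangle\to\langle v_0,\pi(g_m)v\rangle\neq 0$ and $g_m^{-1}g_n\to\infty$ for fixed $g_m$. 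So your double-limit idea actually yields a slightly more self-contained proof than the paper's, modulo this simplification.
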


\begin{proof}
By hypothesis, there exists a sequence $\{g_n\}_{n\geq 0} \subset G$ and $\epsilon >0$ such that $g_n \to \infty$ and $\vert \left\langle \pi(g_{n})v, w \right\rangle \vert \geq \epsilon$. Thus $w$ is not orthogonal to the Hilbert subspace generated by $\pi(G)v$. Denote this Hilbert subspace by $\overline{ \langle \pi(G)v \rangle }$ and remark this is $G$--invariant.

Moreover, $\{\pi(g_n) v\}_{n\geq 0}$ being bounded in the norm of $\mathcal{H}$, there exist $v_0 \in  \mathcal{H}$ and a subsequence $\{n_k\}_{k\geq 0}$ such that $\{\pi(g_{n_k}) v\}_{k\geq 0}$ weakly converges to $v_0$. Thus $\lim\limits_{n_k \rightarrow \infty} \left\langle \pi(g_{n_k}) v , w'  \right\rangle =\left\langle v_0 , w' \right\rangle$, for every $w' \in  \mathcal{H}$. From here we conclude that $v_0$ is a non-zero vector. 

Furthermore, we claim that $v_0$ is a vector in $\overline{ \langle \pi(G)v \rangle }$. Indeed, suppose the converse that $v_0=w_1+w_2$, with $w_1 \in \overline{ \langle \pi(G)v \rangle }$ and $w_2$ a non-zero vector orthogonal to $\overline{ \langle \pi(G)v \rangle }$. Taking $w'=w_2$, we obtain that $\lim\limits_{n_k \rightarrow \infty} \left\langle \pi(g_{n_k}) v , w_2  \right\rangle =0=\left\langle v_0 , w_2 \right\rangle$, which is impossible. The claim follows and we conclude that there exists $g \in G$ such that $\left\langle v_0 , \pi(g)v \right\rangle \neq 0$. For this $g$ we obtain that $\lim\limits_{n_k \rightarrow \infty} \left\langle \pi(g_{n_k}) v , \pi(g)v \right\rangle =\left\langle v_0 , \pi(g)v \right\rangle$. The conclusion follows by  only remarking that $\vert \left\langle \pi(g^{-1}g_{n_k})v,  v \right\rangle \vert \nrightarrow 0$.
\end{proof}

In addition, the following easy lemma asserts that compact subsets do not count for the Howe--Moore property, given a `polar decomposition' of the locally compact group.

\begin{lemma}
\label{sequn_A}
Let $G$ be a second countable, locally compact group admitting a decomposition $G=K_{1}AK_{2}$, where $K_{1},K_{2}$ are compact subset and $A$ is any subset of $G$. Let $(\pi, \mathcal{H})$ be a unitary representation of $G$. 
If for every sequence $\{a_{n}\}_{n>0} \subset A$, with $\{a_{n}\}_{n>0} \rightarrow \infty $, the corresponding matrix coefficients are $C_0$, then all matrix coefficients of $G$, with respect to $(\pi, \mathcal{H})$, vanish at $\infty$.
\end{lemma}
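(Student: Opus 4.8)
The plan is to argue by contradiction, absorbing the two compact factors of the decomposition into the vectors via strong continuity, so that an arbitrary escaping sequence in $G$ is replaced by an escaping sequence in $A$, to which the hypothesis applies.

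First I would unwind the hypothesis: it asserts that for every sequence $\{a_n\}_{n>0}\subset A$ with $a_n\to\infty$ and every pair $v,w\in\mathcal H$, one has $\langle\pi(a_n)v,w\rangle\to 0$. Now suppose, for contradiction, that some matrix coefficient $c_{v,w}$ is not $C_0$; then there are $\epsilon>0$ and a sequence $g_n\to\infty$ in $G$ with $\vert\langle\pi(g_n)v,w\rangle\vert\ge\epsilon$ for all $n$. Using $G=K_1AK_2$, write $g_n=k_n a_n k_n'$ with $k_n\in K_1$, $a_n\in A$, $k_n'\in K_2$, and pass to a subsequence (keeping the same names) along which $k_n\to k\in K_1$ and $k_n'\to k'\in K_2$; this is possible since $K_1,K_2$ are compact, hence sequentially compact. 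I then claim $a_n\to\infty$: otherwise some subsequence of $\{a_n\}$ would lie in a compact set $C\subseteq G$, and then the corresponding subsequence of $\{g_n\}$ would lie in the compact set $K_1CK_2$ (continuous image of the compact product $K_1\times C\times K_2$), contradicting $g_n\to\infty$.

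Finally I would estimate. By unitarity, $\langle\pi(g_n)v,w\rangle=\langle\pi(a_n)\pi(k_n')v,\pi(k_n^{-1})w\rangle$. Strong continuity gives $\pi(k_n')v\to\pi(k')v=:v'$ and $\pi(k_n^{-1})w\to\pi(k^{-1})w=:w'$ in norm. Writing
\[
\langle\pi(a_n)\pi(k_n')v,\pi(k_n^{-1})w\rangle
=\langle\pi(a_n)v',w'\rangle
+\langle\pi(a_n)(\pi(k_n')v-v'),\pi(k_n^{-1})w\rangle
+\langle\pi(a_n)v',\pi(k_n^{-1})w-w'\rangle,
\]
the first term tends to $0$ by the hypothesis applied to the escaping sequence $\{a_n\}\subset A$, while the remaining two are bounded in modulus by $\|\pi(k_n')v-v'\|\,\|w\|$ and $\|v'\|\,\|\pi(k_n^{-1})w-w'\|$ respectively (the operator norm of each $\pi(a_n)$ being $1$), and hence also tend to $0$. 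Thus $\langle\pi(g_n)v,w\rangle\to 0$, contradicting $\vert\langle\pi(g_n)v,w\rangle\vert\ge\epsilon$; so every matrix coefficient is $C_0$. The only point requiring a moment's care is the claim $a_n\to\infty$, which rests on compactness of $K_1CK_2$; everything else is a routine three-term splitting, so I do not anticipate a genuine obstacle here.
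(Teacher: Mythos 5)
Your proof is correct and follows essentially the same route as the paper's: argue by contradiction, write $g_n=k_na_nk_n'$, use compactness of $K_1,K_2$ plus strong continuity to replace $v,w$ by limit vectors, and kill the error terms by a three-term estimate with $\|\pi(a_n)\|=1$. The only difference is cosmetic (you justify $a_n\to\infty$ explicitly and pass to convergent subsequences of the group elements rather than of the translated vectors), so there is nothing to add.
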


\begin{proof}[Proof]
Suppose there exist a sequence $\{g_{n}\}_{n>0} \subset G $, with $g_{n} \rightarrow \infty $, and two non-zero vectors $v,w \in \mathcal{H}$ such that the corresponding matrix coefficient $c_{v,w}(g_{n})$ does not vanish at $\infty$. As $G=K_{1}AK_{2}$ one has $g_{n}=k_{n}a_{n}h_{n}$, where $k_{n} \in K_1, h_{n} \in K_2$ and $a_{n} \in A$, for every $n>0$. We have thus $a_{n} \rightarrow \infty $. As $K_{1},K_{2}$ are compact sets and by passing to a subsequence, we can assume that $\{ \pi(k_{n})^{-1}w \}_{n>0}$ and $\{ \pi(h_{n})v \}_{n>0}$ converge, in the norm of $\mathcal{H}$, to the vectors $w_{0}$, respectively $v_{0}$. One has:
\begin{equation*}
\begin{split}
\vert \left\langle \pi (k_{n}a_{n}h_{n})v,w \right\rangle \vert &= \vert \left\langle \pi (a_{n}h_{n})v,\pi(k_{n})^{-1}w \right\rangle -\left\langle \pi (a_{n}h_{n})v,w_{0} \right\rangle \\
&+ \left\langle \pi (h_{n})v,\pi(a_{n})^{-1} w_{0} \right\rangle- \left\langle v_{0},\pi(a_{n})^{-1} w_{0} \right\rangle + \left\langle \pi(a_{n}) v_{0}, w_{0} \right\rangle \vert \\
& \leq \Vert \pi(k_{n})^{-1}w-w_0 \Vert \cdot \Vert v \Vert + \Vert (h_{n})v -v_0\Vert \cdot \Vert  w_0\Vert + \vert \left\langle \pi(a_{n}) v_{0}, w_{0} \right\rangle \vert. \\
\end{split}
\end{equation*} As the first two terms tend to zero as $n \rightarrow \infty$, we have that the matrix coefficient $c_{v_{0},w_{0}}(a_{n})$ does not vanish at $\infty$. We obtain a contradiction and the conclusion follows.
\end{proof}

Let us add here some necessary algebraic conditions for a non-compact locally compact group to admit the Howe--Moore property; however, those conditions are far from being sufficient.  

\begin{proposition}(See Prop.~3.2 in~\cite{CCL+})
\label{prop::open_compact}
Let $G$ be a non-compact locally compact group with the Howe--Moore property. Then any proper open subgroup of $G$ is compact and thus of infinite index in $G$.
\end{proposition}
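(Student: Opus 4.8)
The plan is to test the Howe--Moore property against a single explicit unitary representation attached to the proper open subgroup $H$, namely the quasi-regular representation $\pi$ of $G$ on $\ell^{2}(G/H)$. The decisive feature of this representation is that, $G/H$ being discrete (this is where openness of $H$ enters), the matrix coefficient of the unit vector $\delta_{eH}$ is the indicator function $\mathbf 1_{H}$; hence controlling this matrix coefficient via Howe--Moore will directly control the size of $H$.

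First I would record that $\pi$ is a genuine (strongly continuous) unitary representation: the counting measure on $G/H$ is $G$-invariant, and the orbit map $g\mapsto gH$ is locally constant, which yields strong continuity on the dense linear span of the vectors $\delta_{xH}$. Second, I would identify the $G$-invariant vectors: since $G$ acts transitively on $G/H$, they are exactly the constant functions, that is $\CC\cdot\mathbf 1$ when $[G:H]<\infty$ and $\{0\}$ when $[G:H]=\infty$. Let $P_{0}$ be the orthogonal projection onto this invariant part and put $\mathcal H_{1}:=\big(\ell^{2}(G/H)^{G}\big)^{\perp}$, a closed $G$-invariant subspace on which the subrepresentation $\pi|_{\mathcal H_{1}}$ has no non-zero invariant vectors. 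Third, consider $\xi:=\delta_{eH}-P_{0}\delta_{eH}\in\mathcal H_{1}$; since $H$ is proper we have $[G:H]\ge 2$, whence $\xi\neq 0$, and a short computation using $P_{0}\delta_{eH}=\tfrac1{[G:H]}\mathbf 1$ (read with the convention $1/\infty=0$, so that $\xi=\delta_{eH}$ in the infinite-index case) gives
\begin{equation*}
c_{\xi,\xi}(g)=\langle\pi(g)\xi,\xi\rangle=\mathbf 1_{H}(g)-\frac1{[G:H]}.
\end{equation*}
Consequently $|c_{\xi,\xi}(g)|=1-\tfrac1{[G:H]}\ge\tfrac12$ for every $g\in H$, so the set $\{g\in G:|c_{\xi,\xi}(g)|\ge\tfrac12\}$ contains $H$.

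To finish, I would invoke the Howe--Moore property for $\pi|_{\mathcal H_{1}}$: having no non-zero invariant vectors, all of its matrix coefficients are $\mathrm{C}_{0}$, so the set just described is compact; as $H$ is closed in $G$ (open subgroups are closed), $H$ is a closed subset of a compact set, hence compact. Finally $H$ cannot then have finite index, for otherwise $G$ would be a finite union of translates of the compact set $H$ and therefore compact, contrary to hypothesis. I do not anticipate a serious obstacle: the whole argument is built around the choice of test representation, and what remains are routine verifications --- that $\ell^{2}(G/H)$ is a strongly continuous unitary representation, that its invariant vectors are the constants, and the minor bookkeeping guaranteeing $\xi\neq0$ and that the perturbed matrix coefficient stays bounded away from $0$ on $H$.
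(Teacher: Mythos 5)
Your argument is correct, and all the auxiliary verifications you flag (strong continuity via openness of the stabilizers, identification of the invariant vectors with the constants, non-vanishing of $\xi$, the computation $c_{\xi,\xi}=\mathbf 1_{H}-\tfrac1{[G:H]}$) go through; in particular $H$ closed inside the compact set $\{g:|c_{\xi,\xi}(g)|\ge\tfrac12\}$ is compact, and finite index is then excluded by non-compactness of $G$. Note that the paper itself gives no proof of this proposition --- it is quoted from Prop.~3.2 of Cluckers--de Cornulier--Louvet--Tessera--Valette --- and your quasi-regular representation $\ell^{2}(G/H)$ argument is exactly the standard proof given in that reference, so there is nothing to add.
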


\subsection{Mautner's  phenomenon}

\label{subsec::Mautner}
One of the main ideas to verify the Howe--Moore property is to search for `big' subgroups $H \leq G$ admitting at least one non-zero $H$--invariant vector in $\mathcal{H}$. Mautner's phenomenon gives us a method to construct such subgroups. First, let us introduce the following: 

\begin{definition}
\label{def::contr_groups}
Let $G$ be a locally compact group. Let $\alpha=\{g_{n}\}_{n>0}$ be a sequence in $G$. This sequence $\alpha$ defines the set:
\[ 
U^{+}_{\alpha}:= \{ g \in G \; | \; \lim\limits_{n \to \infty} g_{n}^{-1}gg_{n}=e \}. 
\]

Notice that $U^{+}_{\alpha}$ is a subgroup of $G$. It is called \textbf{the contraction group} corresponding to $\alpha$. Because it doesn't need to be closed in general, we denote by $N^{+}_{\alpha}$ the closed subgroup $\overline{U^{+}_{\alpha}}$. In the same way, but using $g_{n}gg_{n}^{-1}$ we define $U^{-}_{\alpha}$ and $N^{-}_{\alpha}$. When $\alpha=\{a^{n}\}_{n>0}$, for some $a \in G$, we simply use the notation $U^{\pm}_{a}:=U^{\pm}_{\alpha}$ and $N^{\pm}_{a}:=N^{\pm}_{\alpha}$.
\end{definition}

 We record the following lemma, known as Mautner's phenomenon (see Bekka--Mayer~\cite[Chp.~III, Thm.~1.4]{BM}): 

\begin{lemma}
\label{Mautner}
Let $G$ be a second countable, locally compact group and $(\pi, \mathcal{H})$ be a unitary representation. Let $\{g_{n}\}_{n>0}$ be a sequence in $G$ and take $v \in \mathcal{H}$ a non-zero vector. Suppose moreover that $\{\pi(g_{n})v\}_{n>0}$ weakly converges to $v_{0}$. 

Then $v_{0}$ is $N^{+}_{\alpha}$--invariant.
\end{lemma}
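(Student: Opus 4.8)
The plan is to show that for any fixed $g \in U^{+}_{\alpha}$ we have $\pi(g)v_0 = v_0$; since $N^{+}_{\alpha} = \overline{U^{+}_{\alpha}}$ and the map $h \mapsto \pi(h)v_0$ is continuous (strong continuity of the representation) while the set of vectors fixed by a given unitary is closed, this suffices. So fix $g \in U^{+}_{\alpha}$, meaning $g_n^{-1} g g_n \to e$ in $G$.

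The key computation is to test $\pi(g)v_0 - v_0$ against an arbitrary vector $w \in \mathcal{H}$ and bound $\langle \pi(g)v_0 - v_0, w\rangle$. First I would write
\begin{equation*}
\langle \pi(g)v_0, w\rangle = \langle v_0, \pi(g^{-1})w\rangle = \lim_{n\to\infty} \langle \pi(g_n)v, \pi(g^{-1})w\rangle = \lim_{n\to\infty}\langle \pi(g)\pi(g_n)v, w\rangle,
\end{equation*}
using the weak convergence $\pi(g_n)v \to v_0$. Now insert the conjugate: $\pi(g)\pi(g_n) = \pi(g_n)\pi(g_n^{-1}g g_n)$, so
\begin{equation*}
\langle \pi(g)\pi(g_n)v, w\rangle = \langle \pi(g_n)\pi(g_n^{-1}g g_n)v, w\rangle = \langle \pi(g_n)v, w\rangle + \langle \pi(g_n)\big(\pi(g_n^{-1}g g_n)v - v\big), w\rangle.
\end{equation*}
The first term converges to $\langle v_0, w\rangle$, again by weak convergence. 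For the error term, since each $\pi(g_n)$ is unitary and hence $1$-Lipschitz, Cauchy--Schwarz gives
\begin{equation*}
\big|\langle \pi(g_n)\big(\pi(g_n^{-1}g g_n)v - v\big), w\rangle\big| \leq \Vert \pi(g_n^{-1}g g_n)v - v\Vert \cdot \Vert w\Vert,
\end{equation*}
and because $g_n^{-1}g g_n \to e$ in $G$ and $h \mapsto \pi(h)v$ is continuous, we get $\pi(g_n^{-1}g g_n)v \to \pi(e)v = v$ in norm, so this term tends to $0$. Combining, $\langle \pi(g)v_0, w\rangle = \langle v_0, w\rangle$ for all $w$, hence $\pi(g)v_0 = v_0$.

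The main obstacle, such as it is, is purely bookkeeping: one must make sure the roles of $g_n$ and $g_n^{-1}$ are used consistently with the definition of $U^{+}_{\alpha}$ (which uses $g_n^{-1} g g_n$), and that the weak convergence is invoked only against fixed vectors (here $\pi(g^{-1})w$ and $w$), never against the moving vectors $\pi(g_n)(\cdots)$, for which one instead needs the norm estimate above. Once $U^{+}_{\alpha}$-invariance of $v_0$ is established, passing to the closure $N^{+}_{\alpha}$ is immediate from strong continuity, completing the proof.
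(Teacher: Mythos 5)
Your proof is correct and follows essentially the same route as the paper: both insert the conjugate via $\pi(gg_n)=\pi(g_n)\pi(g_n^{-1}gg_n)$, test against a fixed vector using the weak convergence, bound the remaining term by $\Vert \pi(g_n^{-1}gg_n)v - v\Vert\,\Vert w\Vert$ via unitarity and Cauchy--Schwarz, and pass to $N^{+}_{\alpha}$ by density and strong continuity. The only cosmetic remark is that the phrase ``the set of vectors fixed by a given unitary is closed'' should read ``the stabilizer $\{h\in G : \pi(h)v_0=v_0\}$ is closed in $G$,'' which is exactly what your appeal to strong continuity gives.
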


\begin{proof}
Let $w \in \mathcal{H}$ be an arbitrary vector and take $g \in U^{+}_{\alpha}$. We have:
\begin{equation*}
\begin{split}
|\left\langle \pi(g)v_{0}-v_{0}, w \right\rangle |&=\lim_{n \rightarrow \infty}|\left\langle \pi(g g_{n})v-\pi(g_{n})v, w \right\rangle |\\
&=\lim_{n \rightarrow \infty}|\left\langle \pi(g_{n}^{-1}g g_{n})v-v, \pi(g_{n}^{-1})w \right\rangle |\\
&\leq \lim_{n \rightarrow \infty}\Vert (\pi(g_{n}^{-1}g g_{n})-Id_{\mathcal{H}})v \Vert \cdot \Vert w\Vert=0.\\
\end{split}
\end{equation*}

Since $U^{+}_{\alpha}$ is dense in $N^{+}_{\alpha}$ and $\pi$ is continuous, it follows that $N^{+}_{\alpha}$ fixes $v_{0}$.
\end{proof}

\subsection{Normal operators as weak limits}
\label{subset::normal_op_weak_lim}

This elementary lemma is a key step used in Section \ref{sec::alg_crit}. 

\begin{lemma}
\label{vN_normal}
Let $\mathcal{H}$ be a separable Hilbert space and consider a sequence $\{ U_{n} \}_{n>0} \subset \mathcal{U}(\mathcal{H})$ of pairwise commuting unitary operators.

Then there exists a subsequence $\{ U_{n_{k}} \}_{n_{k}}$  which weakly converges to an operator $E \in B(\mathcal{H})$. Moreover, any such weak limit is normal.
\end{lemma}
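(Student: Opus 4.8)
The plan is to extract the subsequence from the known weak compactness of the unit ball of $B(\mathcal{H})$ (recalled in the preliminaries, using separability of $\mathcal{H}$), since the $U_n$ are unitaries and hence have operator norm $1$. The real content is that the weak limit $E$ is normal, and the natural route is to compute $\langle E^*Ev, v\rangle$ and $\langle EE^*v, v\rangle$ for an arbitrary $v \in \mathcal{H}$ and show they agree, which by the characterization of normality in the first definition suffices. The key obstacle is that weak operator convergence is \emph{not} jointly continuous under composition, so one cannot naively write $E^*E = \lim U_{n_k}^* U_{n_k} = \lim I = I$ (indeed $E$ need not be unitary). The trick is to avoid multiplying two weakly convergent sequences and instead exploit the commutativity to keep one factor fixed.

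\textbf{Key steps.}
First I would note that for each fixed $m$, weak convergence $U_{n_k} \to E$ implies, by passing to adjoints, that $U_{n_k}^* \to E^*$ weakly as well (since $\langle U_{n_k}^* v, w\rangle = \overline{\langle U_{n_k} w, v\rangle} \to \overline{\langle E w, v\rangle} = \langle E^* v, w\rangle$). Next, fix $v \in \mathcal{H}$ and any index $m$; using that $U_m$ commutes with every $U_{n_k}$ and that $U_m$ is unitary, I would write
\begin{equation*}
\langle E^* U_m v, v\rangle = \lim_{k} \langle U_{n_k}^* U_m v, v\rangle = \lim_k \langle U_m U_{n_k}^* v, v\rangle = \lim_k \langle U_{n_k}^* v, U_m^* v\rangle = \langle E^* v, U_m^* v\rangle = \langle U_m E^* v, v\rangle,
\end{equation*}
so $E^* U_m = U_m E^*$ as operators (the vector $v$ being arbitrary, after polarization in the pairing), i.e. $E^*$ commutes with every $U_m$. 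Taking a further weak limit along $n_k$ in $m$, and again keeping one factor fixed at a time, I would upgrade this to $E^* E = E E^*$: concretely, $\langle E^* E v, w \rangle = \langle E v, E w\rangle = \lim_k \langle U_{n_k} v, E w\rangle = \lim_k \langle E^* U_{n_k} v, w \rangle = \lim_k \langle U_{n_k} E^* v, w\rangle = \langle E E^* v, w\rangle$, where the middle step uses $E^* U_{n_k} = U_{n_k} E^*$ from the previous display and the outer steps use only the definition of the adjoint together with weak convergence $U_{n_k} \to E$.

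\textbf{Remarks on the obstacle.}
The only subtle point, as flagged, is the failure of joint weak continuity of multiplication; the argument circumvents it by never letting \emph{both} factors run to infinity simultaneously — at each stage exactly one of the two operators in a product is a fixed $U_m$ (or a fixed $E^*$), and one passes to the limit in the other. Commutativity of the family $\{U_n\}$ is used exactly once, to slide $U_m$ past $U_{n_k}^*$; note $U_m$ commuting with $U_{n_k}$ gives $U_m$ commuting with $U_{n_k}^* = U_{n_k}^{-1}$ for free. I would close by invoking the equivalence $\langle EE^* v, v\rangle = \langle E^* E v, v\rangle$ for all $v$ $\iff$ $E$ normal from the first definition of the paper, which the computation above establishes.
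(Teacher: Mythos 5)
Your proof is correct and follows essentially the same route as the paper's first proof: weak sequential compactness of the unit ball of $B(\mathcal{H})$ for separable $\mathcal{H}$, the observation that $U_{n_k}^{*} \to E^{*}$ weakly, and commutativity used to slide a fixed unitary past the weak limit one factor at a time, the only difference being that you isolate the intermediate commutation $E^{*}U_m = U_m E^{*}$ where the paper writes a single iterated-limit computation. (The paper also records a second, shorter proof via the commutative von Neumann algebra generated by the $U_n$, which is weakly closed and hence contains $E$, but your argument matches its first, elementary proof.)
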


\begin{proof}[First proof]
As $\{ U_{n} \}_{n>0}$ are unitary operators, they have norm one in $B(\mathcal{H})$. Hence, there is a subsequence $\{ U_{n_{k}} \}_{n_{k}}$  which  weakly converges to an operator $E \in B(\mathcal{H})$. It remains to prove that $E$ is normal. In what follows we replace the subsequence $\{ U_{n_{k}} \}_{n_{k}}$ by the sequence $\{ U_{n} \}_{n>0}$. Notice that, as the sequence $\{ U_{n} \}_{n>0}$ converges weakly to $E$, the sequence  $\{ U_{n}^{*} \}_{n>0}$ converges weakly to $E^{*}$.


By hypothesis, the unitary operators $\{ U_{n} \}_{n>0}$ commute pairwise, thus $U_{n}$ and $U_{k}^{*}$ commute for all $n,k>0$. Therefore for all $v,w \in \mathcal{H}$ one has: 
\begin{equation*}
\begin{split}
\left\langle EE^{*}v, w \right\rangle &= \lim_{n \rightarrow \infty}\left\langle U_{n}(E^{*} v), w \right\rangle = \lim_{n \rightarrow \infty} \left\langle E^{*} v, U_{n}^{*} w \right\rangle \\
&= \lim_{n \rightarrow \infty}( \lim_{k \rightarrow \infty} \left\langle U_{k}^{*} v , U_{n}^{*} w \right\rangle)= \lim_{n \rightarrow \infty}( \lim_{k \rightarrow \infty} \left\langle U_{n} U_{k}^{*} v,  w \right\rangle)\\
&= \lim_{n \rightarrow \infty}( \lim_{k \rightarrow \infty} \left\langle U_{k}^{*} U_{n} v, w \right\rangle)= \lim_{n \rightarrow \infty} \left\langle U_{n} v , E w \right\rangle =\left\langle  E v, E w \right\rangle\\
&= \left\langle E^{*}  E v, w \right\rangle.\\  
\end{split} 
\end{equation*}
\end{proof}

\begin{proof}[Second proof]
Another way to see this is using von Neumann algebras. More precisely, as the unitary operators $\{ U_{n}\}_{n>0}$ commute pairwise, they generate a commutative von Neumann algebra which is closed, by definition, in the weak topology. Thus the weak limit $E$ is also contained in this von Neumann algebra. We conclude that $E$ is normal as operator.
\end{proof}

The pairwise commutativity condition in Lemma~\ref{vN_normal} is essential to obtain a normal weak limit. The following is an example in that sense.
\begin{example}
\label{exa::not_normal}
Consider the Hilbert space $ \mathcal{H}:=\ell^{2}(\mathbb{Z})\bigoplus \ell^{2}(\mathbb{Z})$ and for every $n >0$, let $U_{n}:=\left( \begin{array}{cc} 0 & \lambda_{n} \\ 1 & 0 \end{array} \right) $, where $\lambda_n$ is the unitary operator of translation by $n$ of the Hilbert space $\ell^{2}(\mathbb{Z})$. Notice that the sequence of unitary operators $\{\lambda_n\}_{n>0}$ weakly converges to $0$. Therefore, $\{ U_{n} \}_{n>0}$ is a sequence of unitary operators of $\mathcal{H}$ which do not commute pairwise and whose weak limit is the operator $E=\left( \begin{array}{cc} 0 & 0 \\ 1 & 0 \end{array}  \right)$. Equally, the weak limit of $\{ U_{n}^{*} \}_{n>0}$ is the operator $E^{*}=\left( \begin{array}{cc} 0 & 1 \\ 0& 0 \end{array} \right)$ and one can notice that $EE^{*}=\left( \begin{array}{cc} 0 & 0 \\ 0 & 1 \end{array} \right)$  and $E^{*}E=\left( \begin{array}{cc} 1 & 0 \\ 0 & 0 \end{array} \right)$. 
Therefore, $E$ is not normal.
\end{example}

\section{An algebraic criterion}
\label{sec::alg_crit}

This section proposes a criterion, given by Theorem~\ref{Crit_H_M}, to verify the Howe--Moore property, over separable Hilbert spaces, for locally compact groups admitting a special decomposition. Its proof uses the idea of Dave Witte Morris~\cite[Theorem~10.14]{WM}, which relies on a more elaborate version of Mautner's phenomenon; for further references this idea is recorded in a separate lemma.
\begin{lemma}
\label{lem::gen_G_invariant_vect}
Let $G$ be a second countable, locally compact group and $(\pi, \mathcal{H})$ be a unitary representation of $G$, without $G$--invariant vectors. Assume there exists a sequence $\alpha=\{g_{n}\}_{n>0}$ of pairwise commuting elements of $G$ such that $g_n \to \infty $ and $G= \overline{ \left\langle U^{+}_{\alpha},U^{-}_{\alpha}\right\rangle }$.

Then all matrix coefficients of $(\pi, \mathcal{H})$ corresponding to $\{g_{n}\}_{n>0}$ are $C_0$.
\end{lemma}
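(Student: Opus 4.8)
The plan is to combine Mautner's phenomenon (Lemma~\ref{Mautner}) with the fact that pairwise commuting unitaries have normal weak limits (Lemma~\ref{vN_normal}). Suppose, for contradiction, that some matrix coefficient corresponding to $\{g_n\}_{n>0}$ is not $C_0$. By Lemma~\ref{v=w}, there is a non-zero vector $v \in \mathcal{H}$, a subsequence (which I relabel $\{g_n\}$), and $\epsilon > 0$ with $|\langle \pi(g_n)v, v\rangle| \geq \epsilon$ for all $n$. The operators $\pi(g_n)$ are pairwise commuting unitaries since the $g_n$ commute, so by Lemma~\ref{vN_normal} I may pass to a further subsequence so that $\pi(g_n) \to E$ weakly, with $E$ normal; in particular $\pi(g_n)v \to Ev$ weakly, and since $|\langle Ev, v\rangle| = \lim |\langle \pi(g_n)v, v\rangle| \geq \epsilon$, the vector $Ev$ is non-zero.

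Next I apply Mautner's phenomenon to the weakly convergent sequence $\{\pi(g_n)v\}$: Lemma~\ref{Mautner} gives that $Ev$ is $N^{+}_{\alpha}$-invariant, hence $U^{+}_{\alpha}$-invariant. To get $U^{-}_{\alpha}$-invariance I use normality of $E$. The key computation is that for $g \in U^{-}_{\alpha}$ one has $\pi(g)E = E$; concretely, for any $w \in \mathcal{H}$,
\begin{equation*}
\langle \pi(g)Ev - Ev, w\rangle = \langle E v, \pi(g^{-1})w - w\rangle,
\end{equation*}
and since $\pi(g_n^{-1}) = \pi(g_n)^* \to E^*$ weakly while $g_n g g_n^{-1} \to e$, one checks by the same manipulation as in the proof of Lemma~\ref{vN_normal} that $\langle Ev, \pi(g^{-1})w\rangle = \langle Ev, w\rangle$; here the normality of $E$ (equivalently $\|Ev\| = \|E^*v\|$, or the commutation of $E$ with $E^*$ exploited through the double-limit argument) is exactly what makes the $U^{-}_{\alpha}$ side work, mirroring how $E^*$ plays the role that the genuine inverses $\pi(g_n^{-1})$ played for $U^{+}_{\alpha}$. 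Thus $Ev$ is invariant under both $U^{+}_{\alpha}$ and $U^{-}_{\alpha}$, hence under the subgroup they generate, and by strong continuity of $\pi$ under its closure $\overline{\langle U^{+}_{\alpha}, U^{-}_{\alpha}\rangle} = G$.

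This produces a non-zero $G$-invariant vector $Ev$, contradicting the hypothesis that $(\pi, \mathcal{H})$ has no $G$-invariant vectors. Therefore every matrix coefficient corresponding to $\{g_n\}_{n>0}$ is $C_0$.

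The step I expect to be the main obstacle is establishing the $U^{-}_{\alpha}$-invariance of $Ev$: Mautner's phenomenon as stated (Lemma~\ref{Mautner}) only directly yields $N^{+}_{\alpha}$-invariance of a weak limit, so for the "$-$" side I cannot simply quote it. The resolution is to run the Mautner-type estimate not on $\pi(g_n)v$ but using the limit operator $E$ together with the adjoint relation $\pi(g_n)^* \to E^*$ weakly; the normality of $E$ (guaranteed by Lemma~\ref{vN_normal}) is what lets the roles of $g_n$ and $g_n^{-1}$ be interchanged in the limit. Care is needed to make the interchange of the two limits rigorous — exactly the double-limit bookkeeping carried out in the first proof of Lemma~\ref{vN_normal} — and to track which subsequence is being used after each extraction, but no genuinely new idea beyond those two lemmas is required.
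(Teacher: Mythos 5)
Your proposal follows the paper's own route in all essentials: pass to a weak limit $E$ of $\{\pi(g_{n})\}$, use pairwise commutativity to get normality of $E$ (Lemma~\ref{vN_normal}), obtain invariance under $U^{+}_{\alpha}$ and $U^{-}_{\alpha}$ by Mautner-type estimates, and contradict the absence of invariant vectors. The only real divergence is the endgame: the paper proves the two-sided relation $\pi(u_{+})E\pi(u_{-})=E$ and then uses $EE^{*}=E^{*}E\neq 0$ to find a non-zero vector in $E(\mathcal{H})\cap E^{*}(\mathcal{H})$, while you want $Ev$ itself to be invariant. Your variant is legitimate: for normal $E$ one has $\Vert E\phi\Vert=\Vert E^{*}\phi\Vert$, hence $\ker E=\ker E^{*}$ and $\overline{E(\mathcal{H})}=\overline{E^{*}(\mathcal{H})}$, so $U^{-}_{\alpha}$-invariance of the range of $E^{*}$ does transfer to $Ev$.

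Two points need repair, though. First, Lemma~\ref{v=w} does not give what you quote: its conclusion is non-vanishing of the $(v,v)$-coefficient along $\{g^{-1}g_{n_k}\}$ for some fixed $g$, not along a subsequence of the original sequence, and replacing $\{g_{n}\}$ by $\{g^{-1}g_{n_k}\}$ destroys pairwise commutativity and changes $U^{\pm}_{\alpha}$. Fortunately the reduction to $w=v$ is unnecessary: keep general $v,w$ with $\vert\langle\pi(g_{n_k})v,w\rangle\vert\geq\epsilon$ and deduce $Ev\neq 0$ from $\langle Ev,w\rangle\neq 0$, as the paper does. Second, the $U^{-}_{\alpha}$ step is not ``the same manipulation as in Lemma~\ref{vN_normal}'': that double-limit swap uses commutativity to prove $EE^{*}=E^{*}E$ and does not by itself give $\langle Ev,\pi(g^{-1})w\rangle=\langle Ev,w\rangle$; a direct attempt stalls because $g_{n}^{-1}gg_{n}$ is uncontrolled for $g\in U^{-}_{\alpha}$. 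What is actually needed is the relation $E\pi(u_{-})=E$ (equivalently $\pi(u_{-})E^{*}=E^{*}$), obtained from the Mautner estimate in which $g_{n}u_{-}^{\pm 1}g_{n}^{-1}\to e$ acts on the right-hand vector — this is precisely the ``claim'' in the paper's proof, or Lemma~\ref{Mautner} applied to the sequence $\{g_{n}^{-1}\}$ — combined with the normality fact above. You name exactly these ingredients in your closing paragraph, so the shortfall is one of execution rather than of idea; with those two fixes the argument is complete and essentially the paper's.
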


\begin{proof}
As $G$ is second countable and using Lemma~\ref{lem::sep_loc_compact}, we can assume, without loss of generality, that the Hilbert space $\mathcal{H}$ is separable. In particular, by restricting to a subsequence, we can consider  that  the sequence $\{\pi(g_n)\}_{n>0}$ weakly converges to an operator $E$.

By contraposition, we assume there exist two non-zero vectors $v,w \in \mathcal{H}$ (considered fixed for what follows) such that $ \vert  \left\langle \pi(g_{n})v, w \right\rangle \vert  \nrightarrow 0$. We want to prove that $(\pi, \mathcal{H})$ has a non-zero $G$--invariant vector. Because $\{\pi(g_n)\}_{n>0}$ weakly converge to an operator $E$, by  Lemma~\ref{vN_normal} we have moreover that $E$ is normal and $E \neq 0$.

\medskip
We now claim that the following important property of $E$ holds: $\pi(u_{+})E\pi(u_{-})=E$, for every $u_{+} \in U^{+}_{\alpha}$ and every $u_{-} \in U^{-}_{\alpha}$. 

\textit{Proof of the claim.}

Let $u_{-} \in U^{-}_{\alpha}$.  For every $\phi, \psi \in \mathcal{H}$ one has:
\begin{equation*}
\begin{split}
|\left\langle E\pi(u_{-})\phi, \psi \right\rangle- \left\langle E\phi, \psi \right\rangle |&= |\lim_{n \rightarrow \infty} \left\langle \pi(g_{n} u_{-})\phi-\pi(g_{n})\phi, \psi \right\rangle|\\
&= |\lim_{n \rightarrow \infty} \left\langle \pi(g_{n} u_{-} g_{n}^{-1}) \pi(g_{n})\phi- \pi(g_{n})\phi, \psi \right\rangle |\\
&=|\lim_{n \rightarrow \infty} \left\langle \pi(g_{n})\phi, \pi(g_{n} u_{-}^{-1} g_{n}^{-1}) \psi-\psi \right\rangle | \\
&\leq \lim_{n \rightarrow \infty} \Vert \pi(g_{n})\phi\Vert \cdot \Vert (\pi(g_{n} u_{-}^{-1} g_{n}^{-1}-e)\psi\Vert =0, \\
\end{split}
\end{equation*} as $\lim\limits_{n \rightarrow \infty} g_{n}u_{-}g_{n}^{-1}=e$. We conclude that $E\pi(u_{-})=E$, for every $u_{-} \in U^{-}_{\alpha}$.
In the same way one proves that $\pi(u_{+})E=E$, for every $u_{+} \in U^{+}_{\alpha}$. This proves the claim.

\medskip

By the claim we obtain that,  for every $\phi \in \mathcal{H}$, the vector $E(\phi)$ (respectively, $E^{*}(\phi)$) is $U^{+}_{\alpha}$ (respectively, $U^{-}_{\alpha}$) invariant. In particular, this is the case for the vectors $E(v)$  and respectively, $E^{*}(w)$. 

Recall that  $E$ is normal and different from zero. Thus, we have that $E^{*}E=EE^{*}\neq 0$ (otherwise $0=\left\langle E^{*}E (\phi), \phi \right\rangle =\Vert E (\phi) \Vert^{2}$ for every $\phi \in \mathcal{H}$ contradicting $E \neq 0$). In this way we are able to find a non-zero vector $\psi  \in E(\mathcal{H})\bigcap E^{*}(\mathcal{H})\neq \{0\} \subset \mathcal{H}$ which is $\left\langle N^{+}_{\alpha},N^{-}_{\alpha} \right\rangle $--invariant, so also $G$--invariant. The contradiction follows and the lemma stands proven.
\end{proof}

\begin{theorem}
\label{Crit_H_M}
Let $G$ be a second countable, locally compact group having the following properties:
\begin{list}{\roman{qcounter})~}{\usecounter{qcounter}}
\item
$G$ admits a decomposition $K_1A^{+}K_2$, where $K_1,K_2 \subset G$ are compact subsets and $A^{+}$ is an abelian sub semi-group of $G$;
\item 
every sequence $\alpha=\{g_{n}\}_{n>0} \subset A^{+}$, with $g_{n} \rightarrow \infty$,  admits a subsequence $\beta=\{g_{n_k}\}_{n_k>0}$ such that  $G= \overline{ \left\langle U^{+}_{\beta},U^{-}_{\beta}\right\rangle }$.
\end{list}

Then $G$ has  the Howe--Moore property. 
\end{theorem}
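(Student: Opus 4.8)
The plan is to combine the two reductions already set up in the excerpt with Lemma~\ref{lem::gen_G_invariant_vect}. By Remark~\ref{rem::irr_H-M} it suffices to verify the $\mathrm{C}_0$ condition for an arbitrary irreducible, non-trivial unitary representation $(\pi,\mathcal{H})$ of $G$; since such a representation is non-trivial and irreducible, it has no non-zero $G$--invariant vectors, and by Lemma~\ref{lem::sep_loc_compact} its Hilbert space $\mathcal{H}$ is separable. So I would fix such a $(\pi,\mathcal{H})$ and aim to show that all its matrix coefficients vanish at $\infty$.

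Next I would invoke hypothesis~i) and Lemma~\ref{sequn_A} with $A:=A^{+}$: it is enough to prove that for every sequence $\{a_n\}_{n>0}\subset A^{+}$ with $a_n\to\infty$, the matrix coefficients $c_{v,w}(a_n)$ tend to $0$ for all $v,w\in\mathcal{H}$. (Strictly speaking Lemma~\ref{sequn_A} asks for the conclusion "the matrix coefficients are $\mathrm{C}_0$" along each such sequence, but since every $\mathrm{C}_0$ failure is detected by some sequence $g_n\to\infty$, and that sequence decomposes through $A^{+}$, this is exactly what Lemma~\ref{sequn_A} delivers; I would just apply the lemma as stated.) Then fix such a sequence $\alpha'=\{a_n\}_{n>0}\subset A^{+}$ with $a_n\to\infty$. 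By hypothesis~ii) there is a subsequence $\beta=\{a_{n_k}\}_{k>0}$ with $G=\overline{\langle U^{+}_{\beta},U^{-}_{\beta}\rangle}$; moreover the elements of $\beta$ all lie in the abelian semi-group $A^{+}$, hence pairwise commute, and $a_{n_k}\to\infty$. Thus $\beta$ satisfies all the hypotheses of Lemma~\ref{lem::gen_G_invariant_vect}, which yields that all matrix coefficients of $(\pi,\mathcal{H})$ along $\beta$ are $\mathrm{C}_0$; in particular $c_{v,w}(a_{n_k})\to 0$.

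The only subtlety is to pass from "vanishing along a subsequence $\beta$ of every $\alpha'$" to "vanishing along every $\alpha'\subset A^{+}$ with $a_n\to\infty$", i.e. to rule out that some full sequence $c_{v,w}(a_n)$ fails to converge to $0$ while every subsequence supplied by~ii) does converge. This is a standard subsequence argument run by contradiction: if $c_{v,w}(a_n)\nrightarrow 0$, pass first to a subsequence along which $\vert c_{v,w}(a_n)\vert\geq\epsilon$ for some $\epsilon>0$; this is again a sequence in $A^{+}$ tending to $\infty$, so hypothesis~ii) gives a further subsequence $\beta$ with $G=\overline{\langle U^{+}_{\beta},U^{-}_{\beta}\rangle}$, and Lemma~\ref{lem::gen_G_invariant_vect} forces $c_{v,w}\to 0$ along $\beta$, contradicting $\vert c_{v,w}\vert\geq\epsilon$ on the whole subsequence. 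Hence $c_{v,w}(a_n)\to 0$ for every $v,w$, Lemma~\ref{sequn_A} upgrades this to all matrix coefficients of $G$ vanishing at $\infty$, and the Howe--Moore property follows.

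I expect no serious obstacle here: the theorem is essentially a bookkeeping assembly of Remark~\ref{rem::irr_H-M}, Lemma~\ref{sequn_A}, and Lemma~\ref{lem::gen_G_invariant_vect}, the real work having been done in Lemma~\ref{lem::gen_G_invariant_vect} (the normal-operator/Mautner argument). The one place to be careful is the interaction between "$\mathrm{C}_0$" (a statement about all of $G$, or about a set being relatively compact) and "vanishing along a sequence"; keeping that distinction straight, and making sure the subsequence extracted in~ii) still escapes to $\infty$ and still consists of pairwise commuting elements (automatic, since $A^{+}$ is abelian), is all that is needed.
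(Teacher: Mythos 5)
Your proposal is correct and follows essentially the same route as the paper: reduce to separable Hilbert spaces without invariant vectors, use Lemma~\ref{sequn_A} to pass to sequences in $A^{+}$, then apply hypothesis~ii) together with Lemma~\ref{lem::gen_G_invariant_vect} (pairwise commutativity being automatic in the abelian semi-group) and conclude by the subsequence/contradiction bookkeeping. The only cosmetic differences are that the paper runs the whole argument by contradiction from a non-vanishing coefficient and extracts the weak limit explicitly before invoking Lemma~\ref{lem::gen_G_invariant_vect}, while you localize the contradiction inside $A^{+}$ and use Remark~\ref{rem::irr_H-M} for the separability reduction; these do not change the substance.
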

\begin{proof}
Notice that by Lemma~\ref{lem::sep_loc_compact} and the fact that $G$ is also separable, it is enough to prove the Howe--Moore property only for unitary representations that are over separable Hilbert spaces.

Therefore, let $(\pi, \mathcal{H})$ be a unitary representation of $G$, without non-zero $G$--invariant vectors and where $ \mathcal{H}$ a separable Hilbert space. Suppose that not all matrix coefficients of $(\pi, \mathcal{H})$ vanish at $\infty$. 

Using Lemma~\ref{sequn_A} there exists a sequence $\{ g_{n} \}_{n>0} \subset A^{+}$ and two non-zero vectors $v,w \in \mathcal{H}$ such that $g_{n} \rightarrow \infty$ and $ \vert  \left\langle \pi(g_{n})v, w \right\rangle \vert  \nrightarrow 0$. By passing to a subsequence $\{g_{n_{k}}\}_{k>0}$ and using the separability of $\mathcal{H}$ (see Lemma~\ref{vN_normal}), assume that $\{ \pi(g_{n_{k}}) \}_{n_{k}}$ weakly converges to an operator $E$ and that $\vert  \left\langle \pi(g_{n_k})v, w \right\rangle \vert >C>0$, for every $n_k$ and where $C$ is a constant.  Applying Lemma~\ref{lem::gen_G_invariant_vect} to a corresponding subsequence of $\alpha=\{ \pi(g_{n_{k}}) \}_{n_{k}} $ given by hypothesis ii), we obtain a contradiction.  Our criterion stands proven.
\end{proof}

\section{Unified proof of known examples}
\label{sec::unif_examples}
As announced, in this section we apply the criterion provided by Theorem~\ref{Crit_H_M} to the classes of groups mentioned in the introduction. Thereby we deduce, in a unified fashion, that the Howe--Moore property holds for all those classes.

\subsection{Real Lie groups}
\label{subsec::real_Lie_groups}

For the class of connected, non-compact, simple real Lie groups with finite center the proof is the same as the one given in Bekka--Mayer~\cite{BM}; still we outline the ideas.

It is a well-known fact that a connected, non-compact, semi-simple real Lie group $G$, with finite center, has a $KA^{+}K$ decomposition (called the polar decomposition) where $K$ is a maximal compact subgroup, $A < G$ is the closed subgroup corresponding to the maximal abelian sub-algebra of the Lie algebra of $G$ and $A^{+} \subset A$ is the corresponding positive sub semi-group part (see~\cite[Chapter~III]{BM} for more details).

\begin{example}
Let $G=\SL(n, \RR)$. Its maximal compact subgroup $K$, up to conjugation, is $\SO(n, \RR)$. The subgroup $A$ is the group of all diagonal matrices in $\SL(n, \RR)$ with positive entries and $A^{+}$ is the subset of $A$ formed by the elements $a \in A$ having the diagonal coefficients $a_{ii}$ verifying $a_{ii} \leq a_{i+1, i+1}$, for every $1\leq i \leq n-1$.
\end{example}

Therefore, to use our criterion Theorem \ref{Crit_H_M} for connected, non-compact, simple real Lie groups, with finite center, it is enough to prove that for every sequence $\alpha = \left( g_{n}\right)_{n \geq 1} \subset A^{+}$ with $g_{n} \rightarrow \infty$ one has that  $\overline{ \left\langle U^{+}_{\alpha},U^{-}_{\alpha}\right\rangle } =G$. More exactly, this is given by the  following general lemma (see~\cite[Chap.~III, Lemma~1.8]{BM}):

\begin{lemma}
\label{lem::existence_hyp_element}
Let $G$ be a connected, non-compact, semi-simple real Lie group, with finite center. Using the above notation, we have:
\begin{list}{\roman{qcounter})~}{\usecounter{qcounter}}
\item
For every $a \in A^{+}$, the subgroup $\overline{ \left\langle U^{+}_{a},U^{-}_{a}\right\rangle }$ is normal and non-discrete in $G$. 
\item 
For every $\alpha =\left( g_{n}\right)_{n \geq 1} \subset A^{+}$, with $g_{n} \rightarrow \infty$, there exist $b \in A^{+}$  and a subsequence $\beta=\{g_{n_k}\}_{n_k>0}$ of $\alpha$ such that $\overline{ \left\langle U^{+}_{\beta},U^{-}_{\beta}\right\rangle } =\overline{ \left\langle U^{+}_{b},U^{-}_{b}\right\rangle } $; moreover $N^{\pm}_{b}= N^{\pm}_{\beta}$.
\end{list}
\end{lemma}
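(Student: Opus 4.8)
The plan is to reduce part ii) to part i) by extracting, from an arbitrary escaping sequence $\alpha \subset A^+$, a single element $b \in A^+$ whose contraction groups govern the behaviour of a subsequence of $\alpha$. Write $\mathfrak{a}$ for the Lie algebra of $A$ and let $\Sigma \subset \mathfrak{a}^*$ be the (finite) set of restricted roots of $G$; the root space decomposition gives $\mathfrak{g} = \mathfrak{g}_0 \oplus \bigoplus_{\lambda \in \Sigma} \mathfrak{g}_\lambda$, and for $a = \exp(H)$ with $H \in \mathfrak{a}$ one has, for $X \in \mathfrak{g}_\lambda$, that $a^{-1} (\exp X) a = \exp( e^{-\lambda(H)} X)$. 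Consequently, for a single element $a = \exp(H)$,
\[
U^+_a = \Big\langle \exp X : X \in \mathfrak{g}_\lambda,\ \lambda(H) > 0 \Big\rangle, \qquad U^-_a = \Big\langle \exp X : X \in \mathfrak{g}_\lambda,\ \lambda(H) < 0 \Big\rangle,
\]
so the pair $(U^+_a, U^-_a)$ depends only on the set $\{ \lambda \in \Sigma : \lambda(H) > 0\}$, i.e. on which open half-spaces (determined by the root hyperplanes) the vector $H$ lies in. The analogous statement for a sequence $\alpha = (g_n) = (\exp H_n)$ is that $g_n^{-1} (\exp X) g_n \to e$ for $X \in \mathfrak{g}_\lambda$ precisely when $\lambda(H_n) \to +\infty$, so $U^+_\alpha$ and $U^-_\alpha$ are each generated by those root groups whose exponents escape to $+\infty$, respectively $-\infty$, along $\alpha$.

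First I would write $g_n = \exp(H_n)$ with $H_n \in \overline{\mathfrak{a}^+}$ (the closed positive Weyl chamber corresponding to $A^+$). Since $g_n \to \infty$ we have $\|H_n\| \to \infty$. Passing to a subsequence, arrange that the unit vectors $H_n/\|H_n\|$ converge to some $H_\infty$ in the closed chamber, and refine further so that for each root $\lambda \in \Sigma$ the sequence $\lambda(H_n)$ is eventually of constant sign pattern in the following sense: either $\lambda(H_n) \to +\infty$, or $\lambda(H_n) \to -\infty$, or $\lambda(H_n)$ stays bounded. (This is a finite refinement because $\Sigma$ is finite; when $\lambda(H_\infty) \neq 0$ the sign is forced and the limit is $\pm\infty$, and when $\lambda(H_\infty) = 0$ one still passes to a subsequence along which the bounded-or-not dichotomy stabilises.) Then choose $b = \exp(H_\infty) \in A^+$, or rather a suitable positive multiple $\exp(tH_\infty)$ with $t$ large enough that $\lambda(tH_\infty) > 0$ for every root with $\lambda(H_\infty) > 0$; by the computation above, $U^{\pm}_b$ is generated by exactly the root groups $\mathfrak{g}_\lambda$ with $\pm\lambda(H_\infty) > 0$.

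The remaining point is to check that along the chosen subsequence $\beta$ one actually has $U^{\pm}_\beta = U^{\pm}_b$, hence $N^{\pm}_\beta = N^{\pm}_b$ and $\overline{\langle U^+_\beta, U^-_\beta\rangle} = \overline{\langle U^+_b, U^-_b\rangle}$. The inclusion $U^{\pm}_b \subseteq U^{\pm}_\beta$ is clear: if $\lambda(H_\infty) > 0$ then $\lambda(H_n) \to +\infty$, so each root group $\exp \mathfrak{g}_\lambda$ is contracted by $\beta$ on the $+$ side (and symmetrically). For the reverse inclusion I would argue that a root $\lambda$ with $\lambda(H_\infty) = 0$ cannot contribute to $U^\pm_\beta$: if $\lambda(H_\infty)=0$ then along our refined subsequence $\lambda(H_n)$ is bounded, so $g_n^{-1}(\exp X)g_n = \exp(e^{-\lambda(H_n)}X)$ does not tend to $e$ for $0 \neq X \in \mathfrak{g}_\lambda$ — and since the full contraction group of a sequence in $A$ is always generated by root groups $\mathfrak{g}_\lambda$ entirely contracted (because $A$ normalises each $\mathfrak{g}_\lambda$ and conjugation acts by the scalar $e^{-\lambda(H_n)}$, so a product $\exp X_{\lambda_1} \cdots \exp X_{\lambda_r}$ is contracted iff each factor is), no element outside $\langle \exp\mathfrak{g}_\lambda : \lambda(H_\infty)>0\rangle$ lies in $U^+_\beta$. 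This gives $U^+_\beta = U^+_b$ and likewise $U^-_\beta = U^-_b$, and taking closures yields $N^{\pm}_\beta = N^{\pm}_b$.

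I expect the main obstacle to be the careful bookkeeping in the last step: making precise that the contraction group of a sequence lying in the abelian $A$ decomposes cleanly along root spaces (so that boundedly-scaled directions genuinely drop out), rather than some subtle interaction producing extra contracted elements. This requires the observation that $A$ normalises every $\mathfrak{g}_\lambda$ and that the unipotent subgroup $\prod_{\lambda \in \Psi}\exp\mathfrak{g}_\lambda$ (for $\Psi$ a set of roots positive on $H_\infty$, which one should check is closed under the relevant Lie-algebra operations, being an intersection of the nilradical of a parabolic with the relevant pieces) is a closed subgroup on which conjugation by $g_n$ is computed coordinatewise up to the group structure — a standard but slightly delicate fact about root group coordinates in semisimple Lie groups. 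Once that is in hand, part i) — that $\overline{\langle U^+_b, U^-_b\rangle}$ is normal and non-discrete — is the classical statement: non-discreteness is immediate since $U^+_b$ is a nontrivial connected unipotent subgroup (nontrivial because $b \in A^+$ and $A^+$ escaping forces some root to be positive on it), and normality follows because this closed subgroup is normalised by $A$ (conjugation permutes/scales the root groups), by $K$ up to the fact that $G = \overline{\langle K, A\rangle}$ together with simplicity of $G$ forcing the normal closure to be all of $G$ — in the simple case one then concludes $\overline{\langle U^+_b, U^-_b\rangle} = G$ outright.
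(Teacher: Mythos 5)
Your overall strategy (restricted-root decomposition, trichotomy $\lambda(H_n)\to+\infty$ / $\to-\infty$ / bounded after passing to a subsequence, then producing a single $b\in A^+$ realizing the same contraction data) is exactly the spirit of the argument the paper delegates to Bekka--Mayer \cite[Chap.~III, Lemma~1.8]{BM}; the paper itself gives no proof beyond that citation. However, your execution has a genuine gap at the key step: you take $b=\exp(tH_\infty)$ where $H_\infty$ is the radial limit of $H_n/\Vert H_n\Vert$, and you claim that if $\lambda(H_\infty)=0$ then $\lambda(H_n)$ stays bounded along the refined subsequence. That is false: $\lambda$ can vanish on the limit direction while $\lambda(H_n)\to\infty$ sublinearly. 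Concretely, in $G=\SL(3,\RR)$ take $H_n=\diag(a_n,b_n,c_n)$ in the closed chamber with $a_n-b_n=\sqrt{n}$ and $b_n-c_n=n$; then $\alpha_1(H_\infty)=0$ but $\alpha_1(H_n)=\sqrt{n}\to\infty$, so $\mathfrak{g}_{\alpha_1}$ is contracted by $\beta$ but not by your $b$, and $U^{+}_{b}\subsetneq U^{+}_{\beta}$, so the asserted equalities $N^{\pm}_{b}=N^{\pm}_{\beta}$ fail for that choice of $b$. (For semi-simple, non-simple $G$ even the equality of the closed subgroups $\overline{\langle U^{+},U^{-}\rangle}$ can fail: diverge at rate $n$ in one factor and $\sqrt{n}$ in another.) The fix is standard and is what the cited lemma does: after refining so that each \emph{simple} root either diverges or converges, choose $b=\exp(H_b)$ with $\alpha_i(H_b)>0$ precisely for the divergent simple roots and $\alpha_i(H_b)=0$ for the others; since $H_n$ lies in the closed chamber, every positive root is a nonnegative combination of simple roots, so $\lambda(H_n)\to\infty$ exactly when $\lambda$ involves a divergent simple root, i.e.\ exactly when $\lambda(H_b)>0$. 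With that $b$ your inclusion argument does give $U^{\pm}_{b}=U^{\pm}_{\beta}$, modulo the (real, but standard) point you flag yourself: that the contraction group of a sequence in $A$ is exactly $\exp\bigl(\bigoplus_{\lambda(H_n)\to\infty}\mathfrak{g}_\lambda\bigr)$, which needs the big-cell/parabolic coordinate argument rather than the bare assertion that a product is contracted iff its factors are.

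A smaller but still real defect is your sketch of part i): being normalized by $A$ and having normal closure equal to $G$ does not make $\overline{\langle U^{+}_{b},U^{-}_{b}\rangle}$ normal. The clean argument is Lie-algebraic: the subalgebra generated by $\bigoplus_{\lambda(H_b)>0}\mathfrak{g}_\lambda$ and $\bigoplus_{\lambda(H_b)<0}\mathfrak{g}_\lambda$ is an ideal of $\mathfrak{g}$ (it is $\ad\mathfrak{g}_0$--invariant and $\ad\mathfrak{g}_\mu$--invariant for all roots $\mu$), so the corresponding connected subgroup is normal in the connected group $G$, non-discrete because some root space is nonzero; simplicity then gives $\overline{\langle U^{+}_{b},U^{-}_{b}\rangle}=G$, which is what the paper actually uses in Theorem~\ref{Crit_H_M}.
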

The proof of this lemma uses the Lie algebra of $G$ and the root system.

\subsection{Euclidean building groups}

Let us now turn our attention to the totally disconnected analogs of semi-simple real Lie groups; namely locally compact groups acting continuously and properly by automorphisms and strongly transitively on locally finite thick Euclidean buildings. For the definition of a building and the related concepts the reader can freely consult the book of  Abramenko and Brown~\cite{AB}.

\begin{notation}

To fix the notation, let us denote a locally finite thick Euclidean building by $\Delta$. Fix for what follows an apartment $\mathcal{A}$ in $\Delta$,  a chamber $C$ in $\mathcal{A}$ and  a special vertex $x_0$ of $C$. By $\Ch(\mathcal{B})$ we denote  the set of all chambers of an apartment $\mathcal{B}$ in $\Delta$ and by $\partial \mathcal{B}$ its corresponding apartment in the spherical building at infinity $\partial \Delta$.
\end{notation}

We make use also of the following:

\begin{definition}
\label{def::strong_trans_action}
Let $\Delta$ be a Euclidean or a spherical building and $G$ be a subgroup of the full automorphisms group $\Aut(\Delta)$ of $\Delta$. We say that $G$ acts  \textbf{strongly transitively} on $\Delta$ if for any two pairs $(A_1,c_{1})$ and $(A_2,c_{2})$ consisting of an apartment $A_i$ and a chamber $c_i \in \Ch(A_i)$, there exists $g \in G$ such that $g(A_1)=A_2$ and $g(c_{1})=c_{2}$. Moreover, since buildings are colorable (i.e., the vertices of any chamber are colored differently and any chamber uses the same set of colors), we say that $g \in \Aut(\Delta)$ is \textbf{type-preserving} if $g$ preserves the coloration of the building. We say that $G \leq \Aut(\Delta)$ is \textbf{type-preserving} if all elements of $G$ are type-preserving.

\end{definition}

\begin{remark}
\label{rem::HM_type_pres}
Recall that the subgroup of all type-preserving automorphisms of $\Delta$ is of finite index in $\Aut(\Delta)$ (see Abramenko--Brown~\cite[Proposition~A.14]{AB}). Therefore, if a subgroup of $\Aut(\Delta)$ enjoys the Howe--Moore property then, by Proposition~\ref{prop::open_compact}, that group must be type-preserving. 
\end{remark}

For the rest of this article, by Remark~\ref{rem::HM_type_pres}, we design $G$ to be a closed, strongly transitive and type-preserving subgroup of $\Aut(\Delta)$. For most of such groups $G$ we want to verify the two conditions given in Theorem~\ref{Crit_H_M}. The first one is the well-known polar decomposition whose proof is recalled in the next subsection. 

\subsubsection*{Polar decomposition}
\label{subsubsec::Polar_decom}

Let $\Stab_{G}(\mathcal{A})$, respectively $\Fix_{G}(\mathcal{A})$, be the stabilizer subgroup, respectively the pointwise stabilizer subgroup, in $G$ of an apartment $\mathcal{A}$ of $\Delta$. 
\medskip

Recall that $\Stab_{G}(\mathcal{A}) / \Fix_{G}(\mathcal{A})$ is the Weyl group $W$, corresponding to the Euclidean building $\Delta$ and that $W$ is generated by the reflexions through the walls of a chamber $C \in \Ch(\mathcal{A})$. Another well-known fact is that $W$ contains a maximal abelian normal subgroup isomorphic to $\mathbb{Z}^{m}$, whose elements are Euclidean translation automorphisms of the apartment $\mathcal{A}$ and $m$ is the Euclidean dimension of $\mathcal{A}$. Denote this maximal abelian normal subgroup by $A$ and remark that its elements are induced by hyperbolic automorphisms of $\Delta$. In particular, every element of $A$ can be lifted (not in a unique way) to a hyperbolic element of $G$. 

\begin{remark}
\label{rem::algabraic_groups_local_fields}
Notice that, for a general closed, strongly transitive and type-preserving subgroup $G$ of $\Aut(\Delta)$, its corresponding abelian subgroup $A < W$ does not necessarily lift to an abelian subgroup of $G$. Still, this is the case if we consider that $G$ is a semi-simple algebraic group over a non Archimedean local field. By the theory of Bruhat--Tits, such a group gives rise to a Euclidean building where the group $G$ acts by automorphisms and strongly transitively. The abelian group $A <W$ is in fact the restriction to $\mathcal{A}$ of a maximal split torus of $G$, which is abelian.
\end{remark}

Let us now construct a basis of $A$ and define the sub semi-group $A^+$. For this, let $x_0 \in \mathcal{A}$ be a special vertex and let $Q$ be a sector in $\mathcal{A}$ based at $x_0$. Recall that $W_{x_0}$, the stabilizer subgroup in $W$ of the vertex $x_0$, is transitive on the set of sectors of $\mathcal{A}$ that are based at $x_0$. This implies that for every $a \in A$, there exists $k \in W_{x_0}$ such that $ka k^{-1}$ maps $x_0$ in $Q$. Moreover, as the Euclidean dimension of $\mathcal{A}$ is $m$ and $A$ is isomorphic to $\mathbb{Z}^{m}$, one can choose a basis $\{\gamma_1,\cdots, \gamma_m\}$ of $A$ such that $\gamma_i$ maps $x_0$ in $Q$, for every $i \in \{1,\cdots, m\}$. Define now $A^{+} \subset A$ to be the sub semi-group consisting of translations mapping $x_0$ in $Q$. Remark also that every element of $A^{+}$ is a product of positive powers of elements from the basis $\{\gamma_1,\cdots, \gamma_m\}$. The subset $A^{+}$ is the desired `maximal abelian' and `positive' sub semi-group part of $A$.

Using the above notation we recall below the proof of the \textbf{polar decomposition}.

\begin{lemma}
\label{lem::polar_decom}
Let $G$ be a closed, strongly transitive and type-preserving subgroup of $\Aut(\Delta)$. Take $\mathcal{A}$ an apartment in $\Delta$ and $x_0$ be a special vertex in $\mathcal{A}$. Then $G=\Stab_{G}(x_0) A^{+} \Stab_{G}(x_0)$.
\end{lemma}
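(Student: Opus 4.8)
The plan is to prove $G = \Stab_G(x_0)\, A^+\, \Stab_G(x_0)$ by taking an arbitrary $g \in G$, looking at the vertex $g(x_0)$, and showing that after pre- and post-composing $g$ with elements of $K := \Stab_G(x_0)$ we can assume $g$ fixes $x_0$, which would already give $g \in K A^+ K$ with the middle factor trivial, unless we first arrange the middle factor to be a translation landing $x_0$ in the sector $Q$. More precisely, first I would pick an apartment $\mathcal{B}$ containing both $x_0$ and $g(x_0)$ — this exists because any two points of a building lie in a common apartment. Since $G$ acts strongly transitively, $G$ is in particular transitive on apartments; fix $h \in G$ with $h(\mathcal{B}) = \mathcal{A}$. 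Then $h(x_0)$ and $hg(x_0)$ are two special vertices of $\mathcal{A}$, and $h(x_0)$ is a vertex of the same type as $x_0$.

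Next, using the Weyl group $W = \Stab_G(\mathcal{A})/\Fix_G(\mathcal{A})$ and the fact that $G$ acts strongly transitively (so every element of $W$ is realized by an element of $\Stab_G(\mathcal{A}) \le G$), I would move $h(x_0)$ to $x_0$: there is $w_1 \in \Stab_G(\mathcal{A})$ with $w_1 h(x_0) = x_0$, because $W$ acts transitively on the vertices of $\mathcal{A}$ of a given type and $h(x_0)$ has the type of $x_0$ (automorphisms are type-preserving). Replacing $g$ by $w_1 h g$, and noting $w_1 h \in K$ would be wrong — rather $w_1 h$ sends $x_0$ to $x_0$? No: I should be careful. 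The clean bookkeeping is: set $g_1 = w_1 h$, so $g_1(x_0) = x_0$, i.e. $g_1 \in K$; then $g_1 g (x_0) = w_1 h g(x_0)$ is some special vertex $y \in \mathcal{A}$ of the type of $x_0$. Now $g = g_1^{-1} (g_1 g)$ with $g_1^{-1} \in K$, so it suffices to write $g_1 g \in K A^+ K$; equivalently, renaming, we may assume from the start that $g(x_0) = y \in \mathcal{A}$ is a special vertex of the same type as $x_0$.

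Then, using transitivity of $W_{x_0}$ on the sectors of $\mathcal{A}$ based at $x_0$, I would find $k \in W_{x_0}$ (again realized in $\Stab_G(x_0) \le K$) such that $k(y)$ lies in the closed sector $Q$. At this point $k(y)$ is a special vertex of $\mathcal{A}$ in $Q$ of the type of $x_0$, and by the construction of $A^+$ — the sub semi-group of translations of $\mathcal{A}$ mapping $x_0$ into $Q$, generated by positive powers of the basis $\{\gamma_1,\dots,\gamma_m\}$ — there is a unique $a \in A^+$ (viewed as an element of $W$, lifted to a hyperbolic element of $G$) with $a(x_0) = k(y)$; here one uses that the translations in $A$ act simply transitively on the type-$x_0$ special vertices of the translation lattice, so landing $k(y)$ inside $Q$ is exactly the condition for the corresponding translation to be in $A^+$. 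Then $a^{-1} k g$ fixes $x_0$, hence lies in $K$, giving $g = k^{-1} a (a^{-1} k g) \in K A^+ K$.

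The main obstacle I expect is the combinatorial verification that every special vertex of $\mathcal{A}$ of the correct type which lies in $Q$ is of the form $a(x_0)$ for a unique $a \in A^+$, and dually that $A^+$ is precisely the translations carrying $x_0$ into $Q$ — this requires knowing that the type-$x_0$ special vertices form a single $A$-orbit (a lattice) and that $Q$ is a fundamental-domain-like sector for $W_{x_0}$, so that the translation part and the finite part $W_{x_0}$ decompose $W$ compatibly. A secondary, more bookkeeping-level point is keeping straight that every element of $W$ and of $W_{x_0}$ can be lifted into $G$ (which is where strong transitivity, not just transitivity on chambers, is used) and that all these lifts can be chosen in $\Stab_G(x_0)$ when they fix $x_0$; this follows from strong transitivity since fixing a chamber at $x_0$ and its apartment pins down the automorphism up to $\Fix_G(\mathcal{A})$, whose elements fix $x_0$ as well.
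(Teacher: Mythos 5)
Your proof is correct and takes essentially the same route as the paper's: reduce $g$ modulo $\Stab_G(x_0)$ on the left, via strong transitivity, to an element sending $x_0$ to a same-type special vertex of $\mathcal{A}$, then use the structure of the affine Weyl group ($W=A\rtimes W_{x_0}$ at a special vertex, with $W_{x_0}$ transitive on sectors based at $x_0$ and $A$ transitive on same-type special vertices) to produce the $A^+$ middle factor, lifting Weyl elements to $\Stab_G(\mathcal{A})$ by strong transitivity. The only difference is bookkeeping: the paper maps an apartment containing $C$ and $g(C)$ onto $\mathcal{A}$ fixing $C$ pointwise (so the first $\Stab_G(x_0)$-factor appears in one step) and conjugates the resulting translation into $A^+$ by $W_{x_0}$ at the end, whereas you first move the target vertex into the sector $Q$ by $W_{x_0}$ and then apply the translation from $A^+$.
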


\begin{remark}
We mention that the equality $G=\Stab_{G}(x_0) A^{+} \Stab_{G}(x_0)$ is an abuse of notation, as $A$ is not a subgroup of $G$. In fact, the polar decomposition is coming from the Bruhat decomposition of $G$ with respect to the affine BN-pair. The reader can consult the book of Abramenko and Brown~\cite[Sections~6.1,~6.2, Proposition~6.3]{AB}.
\end{remark}

\begin{proof}[Proof of Lemma~\ref{lem::polar_decom}]
Take $g \in G$ and let $\mathcal{B}$ be an apartment in $\Delta$ such that $C, g(C) \in \Ch(\mathcal{B})$. As $G$ is strongly transitive, there exists $k \in G$ such that $k(\mathcal{B})=\mathcal{A}$ and $k(C)=C$ pointwise. Therefore $k \in \Stab_{G}(x_0)$. As $ G$ is type-preserving, $kg(x_0)$ is a special vertex of the same type as $x_0$. Therefore, using the fact  that $W$ is simply transitive on special vertices of the same type, there exists a translation automorphism $a$ of the apartment $\mathcal{A}$ sending $kg(x_0)$ into $x_0$. We conclude that $akg(x_0)=x_0$ and thus $akg \in \Stab_{G}(x_0)$. By writing the element $a$ using elements from $A^{+}$ and $ \Stab_{G}(x_0)$, we have that $g \in \Stab_{G}(x_0) A^{+} \Stab_{G}(x_0)$.  Therefore the polar decomposition of the group $G$ is proven, as $\Stab_{G}(x_0)$, being the stabilizer of the vertex $x_0$, is compact and open in $G$. 
\end{proof}

\begin{example}[See Platonov--Rapinchuk~\cite{PR94} page~151]
For $G=\SL(n, \QQ_p)$ the `good' maximal compact subgroup is $\SL(n, \ZZ_{p})$ and $A$ is the group of diagonal matrices in $\SL(n, \QQ_p)$ of the form $\diag(p^{a_1}, \cdots, p^{a_n})$, with the condition that $(a_1,\cdots, a_n) \in \ZZ^{n}$ and $\sum\limits_{i=1}^{n} a_i=0$. In this case, the sub semi-group $A^{+}$ is the subset of $A$ such that $a_1 \leq a_2 \leq \cdots \leq a_n$.
\end{example}

\begin{example}(See Burger--Mozes~\cite[Section~4]{BM00b})
\label{exam::trees_B_M}
When the Euclidean building is a bi-regular tree $T_{bireg}$, with valence $\geq 3$ at every vertex, and $G$ is a closed, type-preserving subgroup of $\Aut(T_{bireg})$, with $2$--transitive action on the boundary of $T_{bireg}$, the polar decomposition goes as follows. Notice that $G$ acts without fixed point on the boundary of $T_{bireg}$ and from~\cite[Section~4]{BM00b}, we have that $G$ is not compact. By Tits~\cite{Ti70}, $G$ must contain a hyperbolic element. Moreover, by using the $2$--transitivity of $G$ on the boundary of $T_{bireg}$, we can construct a hyperbolic element $a \in G$ whose translation length is $2$. Denote by $\ell \subset T_{bireg}$ the  translation axis of $a$, which is a bi-infinite geodesic line of the tree $T_{bireg}$. Let $x_0$ be a vertex of $\ell$. The maximal compact subgroup of the polar decomposition of $G$ is the stabilizer subgroup in $G$ of $x_0$, the subgroup $A =\langle a\rangle$ and $A^{+}=\{a^{n} \; \vert \; n \geq 1\}$. We also mention the known fact that in the case of a thick tree $T$, the strongly transitive action of a subgroup of $\Aut(T)$ on $T$ is equivalent to the $2$--transitive action of that group on the boundary of $T$ (see for example Caprace--Ciobotaru~\cite[Theorem~1.1 and Corollary~3.6]{CaCi}).
\end{example}
 
Related to the polar decomposition from Lemma~\ref{lem::polar_decom}, we recall another well-known decomposition of the group $G$. 

Let $c$ be an ideal chamber in $\partial \mathcal{A}$. Denote by
\begin{equation}
\label{equ::geom_parab_subgroup}
G_c^{0}: =\{ g \in G \; \vert \; g(c)=c \text{ and } g \text{ fixes at least one vertex of } \Delta\}
\end{equation} the `pointwise' stabilizer subgroup in $G$ corresponding to $c \in \Ch(\partial \mathcal{A})$.  Remark that $G_c^{0}$ is a closed subgroup of $\Stab_{G}(c)$ and it does not contain any hyperbolic automorphisms of $\Delta$. Indeed, let $g \in G_c^{0}$ and let $x_g \in \Delta$ be a vertex fixed by $g$. Consider an apartment $\mathcal{B}$ in $\Delta$ such that $x_g \in \mathcal{B}$ and $c \in \Ch(\partial \mathcal{B})$. Then $\mathcal{A}$ and $\mathcal{B}$ share  a common sector pointing to the chamber at infinity $c$. As $g(c)=c$ and $g$ fixes the vertex $x_g$, $g$ must fix a sector in $\mathcal{A} \cap \mathcal{B}$ that points to the chamber $c$. Using this observation, it is easy to see that $G_c^{0}$ is a group. To prove that $G_c^{0}$ is closed, let $\{g_n\}_{n>0} \subset G_c^{0}$ such that $g_n \to g$, for some $g \in G$. We have that $g(c)=c$. Moreover, as $\{g_n\}_{n>0}$ are elliptic elements in $G$, $g$ cannot be hyperbolic. We have that $g \in G_c^{0}$ as desired.

Take $g \in G$ and let $\mathcal{B}$ be an apartment in $\Delta$ such that $g^{-1}(C) \in \Ch(\mathcal{B})$ and $c \in \Ch(\partial \mathcal{B})$. Remark that $\mathcal{A}$ and $\mathcal{B}$ have a sector in common, corresponding to the chamber at infinity $c$. Thus, by the strongly transitive action, there exists $n \in G_c^{0}$ such that $n(\mathcal{B})=\mathcal{A}$. We obtain that $ng^{-1}(C) \in \Ch(\mathcal{A})$. As $G$ is type-preserving, there exists a translation automorphism $a \in A $ of the apartment $\mathcal{A}$ such that $ang^{-1}(x_0)=x_0$. From here we conclude that $ang^{-1} \in \Stab_{G}(x_0)$ and thus $g \in \Stab_{G}(x_0) A G_c^{0}$. Therefore 
\begin{equation}
\label{equ::KAN_decom}
G=\Stab_{G}(x_0) A G_c^{0}.
\end{equation}

\subsubsection*{Levi conditions}
\label{subsubsec::Levi_cond}

Let us now start verifying the second condition given by Theorem~\ref{Crit_H_M}, for a closed, strongly transitive and type-preserving subgroup  $G$ of $\Aut(\Delta)$. Its first step is given by the following well-known proposition; for the convenience of the reader we include its proof.

Regarding the closed subgroup $G_c^{0}$ defined in (\ref{equ::geom_parab_subgroup}), by the strongly transitive action of $G$ on $\Delta$, we have that $G_c^{0}$ acts transitively on the set $\Opp(c)$ of all chambers opposite $c$ of $\Ch(\partial \Delta)$. Indeed, let $c_1 \neq c_2 \in \Opp(c)$. Denote by $A_1$ and respectively, by $A_2$, the unique apartments of $\Delta$ such that $c_1,c \in \Ch(\partial A_1)$ and respectively, $c_{2},c \in \Ch(\partial A_2)$.  Let $Q \subset A_1\cap A_2 $ be a sector pointing to the chamber at infinity $c$ and choose a chamber $C$ of $\Ch(\Delta)$ contained in the interior of $Q$. Then, by the strong transitive action of $G$, there exists $g \in G$ such that $g(C)=C$ and $g(A_1)=A_2$. In addition, we have that $g(Q)=Q$ pointwise; therefore $g \in G_c^{0}$ and $g(c_1)=c_2$.

\begin{proposition}
\label{prop::gen_parab_subgroups}
Let $G$ be a closed, strongly transitive and type-preserving subgroup of $\Aut(\Delta)$. Let $\mathcal{A}$ be an apartment of $\Delta$ and take $c _+\in \Ch(\partial \mathcal{A})$. Denote by $c_-$ the chamber opposite $c_+$ in $\partial \mathcal{A}$.

Then $G=\langle G_{c_+}^{0}, G_{c_-}^{0} \rangle$.
\end{proposition}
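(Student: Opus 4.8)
The plan is to prove that the subgroup $H:=\langle G_{c_+}^{0}, G_{c_-}^{0}\rangle$ exhausts $G$ by combining the two decompositions established above: the polar decomposition $G=\Stab_G(x_0)A^+\Stab_G(x_0)$ (Lemma~\ref{lem::polar_decom}) and the refined decomposition $G=\Stab_G(x_0)AG_c^{0}$ of equation~(\ref{equ::KAN_decom}). Since $G_{c_+}^{0}\subset H$ by definition, it suffices to show that $\Stab_G(x_0)\subset H$ and that $A\subset H$ (more precisely, that every translation automorphism of $\mathcal A$ occurring in the polar decomposition can be realized by an element of $H$); then~(\ref{equ::KAN_decom}) with $c=c_+$ gives $G=\Stab_G(x_0)\cdot A\cdot G_{c_+}^{0}\subset H$.

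First I would handle the translation subgroup $A$. Fix $x_0$ a special vertex on a sector $Q\subset\mathcal A$ pointing to $c_+$, with the opposite sector $Q^{op}$ pointing to $c_-$. Given a translation $a\in A$, I want to write $a=g_+g_-$ (or a short word) with $g_\pm\in G_{c_\mp}^{0}$: geometrically, translating toward $c_+$ can be achieved by an element fixing a sector toward $c_-$ followed by one fixing a sector toward $c_+$, using the transitivity of $G_{c_+}^{0}$ on $\Opp(c_+)$ recalled just before the statement. Concretely, for a translation $a$ moving $x_0$ deep into $Q$, the chamber $a(C)$ and a suitable apartment through it still contain a common sector with $\mathcal A$ pointing to $c_-$; strong transitivity then produces $h\in G_{c_-}^{0}$ with $h^{-1}a$ fixing $x_0$ and an appropriate sector toward $c_+$, hence $h^{-1}a\in G_{c_+}^{0}$, giving $a=h\cdot(h^{-1}a)\in H$. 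Iterating over a basis $\{\gamma_1,\dots,\gamma_m\}$ of $A$ shows $A\subset H$.

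Next, $\Stab_G(x_0)\subset H$. Take $k\in\Stab_G(x_0)$. The chamber $c_+$ and the chamber $k(c_+)$ at infinity, together with the fixed vertex $x_0$, both lie in apartments sharing a sector based at $x_0$; by strong transitivity (or directly by the established transitivity of $G_{c_+}^{0}$, applied after first adjusting) there is $n\in G$ fixing $x_0$ with $nk\in G_{c_+}^{0}$ --- but to land inside $H$ one should instead argue: $k$ and $k^{-1}$ both fix $x_0$, pick $g\in G_{c_-}^{0}$ with $g(k(c_+))=c_+$ using transitivity of $G_{c_-}^{0}$ on $\Opp(c_-)$ together with the fact that $k(c_+)\in\Opp(c_-)$ after a preliminary move, so that $gk\in G_{c_+}^{0}$ (it fixes $c_+$ and the vertex $x_0$), whence $k=g^{-1}(gk)\in\langle G_{c_-}^{0},G_{c_+}^{0}\rangle=H$. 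The point is that any element fixing both $c_+$ and a vertex lies in $G_{c_+}^{0}$, and any element fixing $x_0$ can be split off a factor from $G_{c_-}^{0}$ using the transitivity of $G_{c_-}^{0}$ on chambers opposite $c_-$.

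The main obstacle I anticipate is bookkeeping the apartment-and-sector geometry precisely enough that the strong transitivity hypothesis applies with the correct stabilizer constraints: one must ensure at each step that the group element produced by strong transitivity not only maps the right apartments/chambers but actually lies in $G_{c_\pm}^{0}$, i.e.\ fixes a vertex of $\Delta$ (equivalently, is elliptic), which forces one to arrange that the two apartments genuinely share a sector pointing to the relevant chamber at infinity and that this common sector contains the fixed vertex. Once that geometric setup is done carefully, each reduction is a one-line application of strong transitivity, and assembling $\Stab_G(x_0)$, $A$, and $G_{c_+}^{0}$ via~(\ref{equ::KAN_decom}) finishes the proof.
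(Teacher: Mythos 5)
Your reduction (show $\Stab_G(x_0)\subseteq H:=\langle G_{c_+}^{0},G_{c_-}^{0}\rangle$ and $A\subseteq H$, then conclude via~(\ref{equ::KAN_decom})) is a sensible plan, but both of the one-step factorizations on which it rests are false, and the missing ingredient is precisely the nontrivial part of the paper's argument. For the translations: there is no $h\in G_{c_-}^{0}$ with $h(x_0)=a(x_0)$ when $a$ is a nontrivial translation towards $Q$. Indeed, every element of $G_{c_-}^{0}$ fixes pointwise a sector pointing to $c_-$, hence preserves all Busemann functions centered at points of $\overline{c_-}$, while $a$ shifts at least one of them; so $a$ can never be written as a product of \emph{one} element of $G_{c_-}^{0}$ and \emph{one} element of $G_{c_+}^{0}$. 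Concretely, in $G=\SL(2,\QQ_p)$ with $c_\pm$ the two ends of the standard apartment, $G_{c_+}^{0}$ (resp.\ $G_{c_-}^{0}$) consists of the upper (resp.\ lower) triangular matrices whose diagonal entries are units, and the $(1,1)$ entry of any product of one element of each is a unit, so such a product is never $\diag(p,p^{-1})$. The same defect appears in your treatment of $\Stab_G(x_0)$: from $gk(c_+)=c_+$ you may only conclude $gk\in G_{c_+}$, not $gk\in G_{c_+}^{0}$, since $gk$ sends $x_0$ to $g(x_0)$ and a product of two elliptic elements need not be elliptic. For instance $g=\begin{pmatrix}1&0\\ p^{-1}&1\end{pmatrix}\in G_{c_-}^{0}$ and $k=\begin{pmatrix}p&1\\-1&0\end{pmatrix}\in\SL(2,\ZZ_p)=\Stab_G(x_0)$ satisfy $g(k(c_+))=c_+$, yet $gk=\begin{pmatrix}p&1\\0&p^{-1}\end{pmatrix}$ is hyperbolic and fixes no vertex. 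So neither $A\subseteq H$ nor $\Stab_G(x_0)\subseteq H$ is actually established.

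The idea your proposal lacks is the thickness-based reflection construction of Lemma~\ref{lem::stab_ap_subgroup}: for each wall $M$ of $\mathcal{A}$ one exchanges half-apartments using three elements of $H$ (each produced by strong transitivity and fixing a sector toward $c_+$ or $c_-$, using that $c_+$ and $c_-$ are separated by every wall of $\mathcal{A}$), obtaining $r=vuw\in H$ which acts on $\mathcal{A}$ as the reflection through $M$; since $W=\Stab_{G}(\mathcal{A})/\Fix_{G}(\mathcal{A})$ is generated by such reflections and $\Fix_G(\mathcal{A})\subseteq G_{c_+}^{0}$, this gives $\Stab_G(\mathcal{A})\leq H$. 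In particular lifts of translations do lie in $H$, but only as longer words (products of reflections, hence of several alternating elements of $G_{c_\pm}^{0}$), never as two-term products. With that lemma the paper then avoids the polar decomposition altogether: an arbitrary $g$ is pushed into $\Stab_G(\mathcal{A})$ by one element of $G_{c_+}^{0}$ (transitivity on apartments containing a sector toward $c_+$), one element of $\Stab_G(\partial\mathcal{A})\leq H$, and another element of $G_{c_+}^{0}$. Your outline could be repaired along these lines, but as written the two key steps fail.
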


Before starting the proof, we state the next useful lemma.

\begin{lemma}
\label{lem::stab_ap_subgroup}
Let $G$ be a closed, strongly transitive and type-preserving subgroup of $\Aut(\Delta)$. Let $\mathcal{A}$ be an apartment of $\Delta$ and take $c _+\in \Ch(\partial \mathcal{A})$. Denote by $c_-$ the chamber opposite $c_+$ in $\partial \mathcal{A}$.

Then $\Stab_{G}(\mathcal{A})=\Stab_{G}( \partial \mathcal{A})  \leq  \langle G_{c_+}^{0}, G_{c_-}^{0} \rangle$.\end{lemma}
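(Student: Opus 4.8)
The plan is to prove the two claims in Lemma~\ref{lem::stab_ap_subgroup} separately. First, the equality $\Stab_{G}(\mathcal{A})=\Stab_{G}(\partial\mathcal{A})$: the inclusion $\Stab_{G}(\mathcal{A})\subseteq\Stab_{G}(\partial\mathcal{A})$ is immediate, since an automorphism fixing $\mathcal{A}$ setwise induces an automorphism of $\partial\mathcal{A}$. For the reverse inclusion, suppose $g\in G$ stabilizes $\partial\mathcal{A}$. Pick an ideal chamber $c_+\in\Ch(\partial\mathcal{A})$ together with its opposite $c_-$; then $g$ permutes the pair $\{c_+,c_-\}$ (after possibly replacing $g$ by its action, or noting that $g$ may swap them). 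Either way, $\mathcal{A}$ is the unique apartment of $\Delta$ whose boundary contains both $c_+$ and $c_-$ (two opposite chambers at infinity determine a unique apartment), so $g(\mathcal{A})$ is again an apartment with boundary containing $\{g(c_+),g(c_-)\}=\{c_+,c_-\}$ in some order, hence $g(\mathcal{A})=\mathcal{A}$. This gives $\Stab_{G}(\partial\mathcal{A})\subseteq\Stab_{G}(\mathcal{A})$.

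Next, the inclusion $\Stab_{G}(\mathcal{A})\leq\langle G_{c_+}^{0},G_{c_-}^{0}\rangle$. The idea is to decompose an arbitrary $g\in\Stab_{G}(\mathcal{A})$ into a product of elements fixing $c_+$ and elements fixing $c_-$, each of which additionally fixes a vertex, so lies in the appropriate $G_{c_\pm}^{0}$. Fix the special vertex $x_0\in\mathcal{A}$ and let $g\in\Stab_{G}(\mathcal{A})$. Since $g$ preserves $\mathcal{A}$ and is type-preserving, $g$ induces an element of the Weyl group $W=\Stab_G(\mathcal{A})/\Fix_G(\mathcal{A})$; I would first reduce to the case where $g$ is a translation, i.e.\ lies in a lift of the abelian subgroup $A<W$, by multiplying by a suitable element. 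More precisely: the subgroup generated by $G_{c_+}^{0}\cap\Stab_G(\mathcal{A})$ and $G_{c_-}^{0}\cap\Stab_G(\mathcal{A})$ already surjects onto $W$ --- the reflections through the walls of $\mathcal{A}$ that are "orthogonal" to the direction of $c_\pm$ fix $c_+$ (respectively $c_-$) and fix $x_0$, hence lie in $G_{c_\pm}^{0}$, and such reflections generate the finite part $W_{x_0}$; together with translations (see below) they generate all of $W$. So after multiplying $g$ by an element of $\langle G_{c_+}^{0},G_{c_-}^{0}\rangle$ we may assume $g$ acts on $\mathcal{A}$ as a translation $t$, and moreover (multiplying by an element of $\Fix_G(\mathcal{A})$, which fixes every vertex of $\mathcal{A}$, in particular fixes $c_+$ and $x_0$, so lies in $G_{c_+}^{0}$) we may assume $g$ literally equals a lift of a translation.

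It then remains to show that every translation lift $t$ of an element of $A$ lies in $\langle G_{c_+}^{0},G_{c_-}^{0}\rangle$. Write the translation direction with respect to the pair $(c_+,c_-)$: decompose $t=t_+ t_-$ where $t_+$ translates "toward $c_+$" (away from $c_-$) and $t_-$ translates "toward $c_-$"; concretely, choosing the basis $\{\gamma_1,\dots,\gamma_m\}$ of $A$ adapted to a sector pointing to $c_+$, write $t$ as a product of positive powers of the $\gamma_i$ times negative powers of the $\gamma_i$. A translation by a positive power of $\gamma_i$ moves every sector pointing toward $c_-$ into itself and fixes a vertex of such a sector after correction --- the cleanest argument is: a translation $s$ whose direction points away from $c_-$ satisfies $s(c_-)=c_-$ and fixes a sub-sector of $\mathcal{A}$ pointing to $c_-$, in particular fixes a vertex, so $s\in G_{c_-}^{0}$; symmetrically a translation pointing away from $c_+$ lies in $G_{c_+}^{0}$. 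Hence $t_+\in G_{c_-}^{0}$ and $t_-\in G_{c_+}^{0}$ (being careful that here "toward $c_+$" means "away from $c_-$"), so $t=t_+t_-\in\langle G_{c_+}^{0},G_{c_-}^{0}\rangle$, completing the proof.

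The main obstacle I expect is the bookkeeping in the last two paragraphs: matching up the geometric statement "a translation pointing toward $c$ fixes a vertex and the chamber at infinity opposite $c$" with the algebraic/combinatorial structure of $A$ and its preferred basis, and making sure the generation of $W_{x_0}$ by reflections lying in $G_{c_\pm}^{0}$ is correctly justified using the root system of the (spherical) Weyl group acting on $\partial\mathcal{A}$ --- the reflections one needs are those fixing $c_+$ (equivalently $c_-$) as an ideal chamber, and one must check these generate the stabilizer $W_{x_0}$, which is a standard fact about the spherical building $\partial\mathcal{A}$ but requires a careful statement. Everything else (uniqueness of an apartment from a pair of opposite ideal chambers, the fact that $\Fix_G(\mathcal{A})$ and walls-reflections land in the $G_{c_\pm}^0$, the transitivity facts already established before the proposition) is routine given the preliminaries in the excerpt.
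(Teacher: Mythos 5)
Your proof of the equality $\Stab_{G}(\mathcal{A})=\Stab_{G}(\partial\mathcal{A})$ is essentially fine (the paper treats it as immediate), except that $g$ need not permute $\{c_+,c_-\}$: it sends them to \emph{some} opposite pair of chambers of $\partial\mathcal{A}$, and uniqueness of the apartment spanned by that pair still gives $g(\mathcal{A})=\mathcal{A}$. The main inclusion, however, has two fatal gaps. First, your claim that reflections in $W_{x_0}$ fixing $c_+$ (or $c_-$) lie in $G_{c_\pm}^{0}$ and generate $W_{x_0}$ is false: $W_{x_0}$ acts simply transitively on the chambers of $\partial\mathcal{A}$, so the only element of $W_{x_0}$ fixing the chamber $c_+$ is the identity. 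More structurally, any element of $G_{c_+}^{0}$ fixes pointwise a sector pointing to $c_+$ (this is observed in the paper right after the definition of $G_c^0$), so an element of $G_{c_\pm}^{0}$ that also stabilizes $\mathcal{A}$ acts trivially on $\mathcal{A}$; hence $G_{c_+}^{0}\cap\Stab_G(\mathcal{A})=G_{c_-}^{0}\cap\Stab_G(\mathcal{A})=\Fix_G(\mathcal{A})$, and the subgroup you propose to use maps to the identity in $W$, not onto $W$. Second, your treatment of translations fails for the same reason: a lift of a nontrivial translation of $\mathcal{A}$ is a hyperbolic automorphism of $\Delta$, so it fixes no vertex and by definition does not belong to $G_{c_\pm}^{0}$ (the paper explicitly notes $G_c^0$ contains no hyperbolic elements). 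A translation "toward $c_+$" merely stabilizes the ideal chamber $c_-$ and maps a sector pointing to $c_-$ into itself; it does not fix such a sector, nor any vertex. So $t_+\notin G_{c_-}^{0}$ and the decomposition $t=t_+t_-$ buys nothing.

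The underlying obstruction is that the lemma cannot be proved by manipulating only elements of $G_{c_\pm}^{0}$ that preserve $\mathcal{A}$ — there are none beyond $\Fix_G(\mathcal{A})$. The paper's proof (following Caprace--Ciobotaru) necessarily passes through apartments other than $\mathcal{A}$: since $c_+$ and $c_-$ are separated by every wall of $\mathcal{A}$, any half-apartment of $\mathcal{A}$ contains a sector pointing to $c_+$ or to $c_-$, so by strong transitivity one finds, for any apartment $\mathcal{A}'$ meeting $\mathcal{A}$ in a half-apartment, an element of $\langle G_{c_+}^{0},G_{c_-}^{0}\rangle$ fixing $\mathcal{A}\cap\mathcal{A}'$ pointwise and mapping $\mathcal{A}'$ to $\mathcal{A}$. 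Thickness then provides, for each wall $M$ of $\mathcal{A}$ with half-apartments $H,H'$, a third half-apartment $H''$, and a product $r=vuw$ of three such elements stabilizes $\mathcal{A}$ and acts on it as the reflection through $M$. Since $W=\Stab_G(\mathcal{A})/\Fix_G(\mathcal{A})$ is generated by reflections through walls (in particular, translations are products of reflections in parallel walls, so they never need separate treatment) and $\Fix_G(\mathcal{A})\leq G_{c_\pm}^{0}$, the inclusion follows. You would need to adopt this folding-through-a-third-half-apartment mechanism, or something equivalent, to repair your argument.
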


\begin{proof}
The equality $\Stab_{G}(\mathcal{A})=\Stab_{G}( \partial \mathcal{A})$ is straightforward.

 Following Caprace--Ciobotaru~\cite[proof of Lemma~3.5 ]{CaCi} we start with a basic observation. Let $\mathcal{A}'$ be an apartment of $\Delta$ such that the intersection $\mathcal{A}\cap \mathcal{A}'$ is a half-apartment. We claim that there is some $u \in \langle G_{c_+}^{0}, G_{c_-}^{0} \rangle $ mapping $\mathcal{A}'$ to $\mathcal{A}$ and fixing $\mathcal{A} \cap \mathcal{A}'$ pointwise. Indeed, because $c_+,c_-$ are opposite in $\partial \mathcal{A}$, they are separated by any wall of $\mathcal{A}$. Thus, $\mathcal{A} \cap \mathcal{A}'$, being a half-apartment, contains a sector $Q$ corresponding either to $c_+$, or to $c_-$. We apply the strong transitivity of $G$; there exists an element $g \in G$ fixing pointwise a chamber in the interior of the sector $Q$ and sending $\mathcal{A}'$ to $\mathcal{A}$. Then $g \in \langle G_{c_+}^{0}, G_{c_-}^{0} \rangle$.

\medskip

Let now $M$ be a wall of $\mathcal{A}$. Let $H$ and $H'$ be the two half-apartments of $\mathcal{A}$ determined by $M$. Since $\Delta$ is thick, there exists a half-apartment $H''$ such that $H \cup H''$ and $H' \cup H''$ are both apartments of $\Delta$. By the above claim, we can find an element $u \in \langle G_{c_+}^{0}, G_{c_-}^{0} \rangle$ fixing $H$ pointwise and mapping $H'$ to $H''$. Similarly, there are elements $v, w \in \langle G_{c_+}^{0}, G_{c_-}^{0} \rangle$ fixing $H'$ pointwise and such that $v(H'') = H$ and $w(H) = u^{-1}(H')$. Now we set $r := vuw$. By construction $r$ fixes pointwise the wall $M$ and $r \in \langle G_{c_+}^{0}, G_{c_-}^{0} \rangle$. Moreover we have
$$r(H) = vu u^{-1}(H') = v(H') = H'$$
and 
$$r(H') = vu(H') = v(H'') = H,$$
so that $r$ swaps $H$ and $H'$. It follows that $r$ stabilizes the apartment $\mathcal{A}$ and acts on $\mathcal{A}$ as the reflection through the wall $M$. Since this holds for an arbitrary wall of $\mathcal{A}$ and $\Stab_{G}(\mathcal{A}) /\Fix_{G}(\mathcal{A})$, being the Weyl group $W$, is generated by reflexions through the walls of a chamber $C \in \Ch(\mathcal{A})$, our conclusion follows. 
\end{proof}

\begin{proof}[Proof of Proposition \ref{prop::gen_parab_subgroups}]

Take $g \in G$ and let $\mathcal{B}$ an apartment of $\Delta$ such that $g(c_+), c_+ \in \Ch(\partial \mathcal{B})$. Then $\mathcal{A}$ and $\mathcal{B}$ have a sector in common, corresponding to the chamber at infinity $c_+$. By the strong transitivity of $G$, take $h_1 \in G_{c_+}^{0}$ such that $h_1(\mathcal{B})=\mathcal{A}$. Therefore $h_1g(c_+) \in \Ch(\partial \mathcal{A})$. Recall that the strong transitivity of $G$ on $\Delta$ implies that $G$ acts strongly transitively on $\bd \Delta$ (see Garrett~\cite[Section~17.1]{Gar97}). Applying Lemma \ref{lem::stab_ap_subgroup}, there exists $h_2 \in \Stab_{G}( \partial \mathcal{A}) \leq \langle G_{c_+}^{0}, G_{c_-}^{0} \rangle$ such that $h_2h_1g(c_+)=c_+ $. Thus $h_2h_1g(\mathcal{A})$ and  $\mathcal{A}$ have a sector in common corresponding to the chamber at infinity $c_+$. Take now $h_3 \in G_{c_+}^{0}$ with $h_3h_2h_1g(\mathcal{A})= \mathcal{A}$. Therefore $h_3h_2h_1g \in \Stab_{G}(\mathcal{A})$ and by Lemma \ref{lem::stab_ap_subgroup} the conclusion follows.
\end{proof}

\subsubsection*{`Algebraic' Levi decomposition}

Recall from Baumgartner and Willis~\cite{BW04} the following general theory. Let $G $ be a totally disconnected locally compact group and take $a \in G$. Let 
\begin{equation} 
\label{equ::parab_subgroup}
P^{+}_{a}:=\{ g \in G \; \vert \; \{a^{-n}g a^{n}\}_{n \in \mathbb{N}} \text{ is bounded}\}. 
\end{equation} In the same way, but using $a^{n}g a^{-n}$, we define $P^{-}_{a}$. Following~\cite[Section~3]{BW04}, $P^{+}_{a}$ is a closed subgroup of $G$ and denote
\begin{equation}
\label{equ::contr_subgroup}
U^{+}_{a}:=\{ g \in G \; \vert \; \lim_{n \to \infty }a^{-n}g a^{n}=e\}. 
\end{equation}
By~\cite[Section~3]{BW04}, $P^{+}_{a}$ and $U^{+}_{a}$ are called the \textbf{parabolic}, respectively the \textbf{contraction}, subgroups associated to $a$, where in general $U^{+}_{a}$ is not closed. Moreover, from the algebraic point of view, we have the following \textbf{Levi decomposition}:

\begin{theorem}(See Baumgartner and Willis~\cite[Proposition~3.4, Corollary~3.17]{BW04})
\label{thm::levi_decom}
Let $G$ be a totally disconnected locally compact group which is also metrizable. Let $a \in G$.
Then $P^{+}_{a}= U^{+}_{a} M_a$, where $M_{a}:= P^{+}_{a} \cap P^{-}_{a}$. Moreover, $U^{+}_{a}$ is normal in $P^{+}_{a}$.
\end{theorem}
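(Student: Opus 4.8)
\emph{Proof proposal.} The statement is due to Baumgartner and Willis, and the plan is to derive it from Willis's structure theory of tidy subgroups, which is the natural framework for parabolic and contraction subgroups of totally disconnected locally compact groups. Throughout, write $\phi\colon G\to G$ for the inner automorphism $\phi(x)=a^{-1}xa$, so that $P^{+}_{a}=\{x\in G:\{\phi^{n}(x)\}_{n\ge 0}\text{ is bounded}\}$, $U^{+}_{a}=\{x\in G:\phi^{n}(x)\to e\}$, and $M_{a}=P^{+}_{a}\cap P^{-}_{a}$ is the set of elements whose two-sided $\phi$--orbit is bounded; metrizability of $G$ lets us argue with sequences. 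I would first dispose of the two easy assertions. The inclusion $U^{+}_{a}M_{a}\subseteq P^{+}_{a}$ is immediate since $U^{+}_{a}\subseteq P^{+}_{a}$ and $M_{a}\subseteq P^{+}_{a}$ and $P^{+}_{a}$ is a group. For normality, take $g\in P^{+}_{a}$ and $u\in U^{+}_{a}$ and write $\phi^{n}(gug^{-1})=\phi^{n}(g)\,\phi^{n}(u)\,\phi^{n}(g)^{-1}$, where the family $\{\phi^{n}(g)\}_{n\ge 0}$ lies in a compact set $K$; by the standard uniform continuity of conjugation along a compact set (for every neighbourhood $V$ of $e$ there is a neighbourhood $W$ of $e$ with $kWk^{-1}\subseteq V$ for all $k\in K\cup K^{-1}$), the hypothesis $\phi^{n}(u)\to e$ forces $\phi^{n}(gug^{-1})\to e$, so $gug^{-1}\in U^{+}_{a}$; since $g^{-1}\in P^{+}_{a}$ as well, this gives $U^{+}_{a}\trianglelefteq P^{+}_{a}$.

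The substance is the inclusion $P^{+}_{a}\subseteq U^{+}_{a}M_{a}$. Here I would invoke Willis's theorem that there is a compact open subgroup $V\le G$ which is \emph{tidy} for $a$: setting $V_{+}=\bigcap_{n\ge 0}\phi^{n}(V)$ and $V_{-}=\bigcap_{n\ge 0}\phi^{-n}(V)$, one has the factorization $V=V_{+}V_{-}$ (``tidiness above'') and the ascending union $\bigcup_{n\ge 0}\phi^{-n}(V_{-})$ is a closed subgroup (``tidiness below''). From these one extracts the two ingredients needed. The first is that $\bigcap_{n\ge0}\phi^{n}(V_{-})$ is a $\phi$--invariant compact subgroup, hence contained in $M_{a}$, while every element of $V_{-}$ whose forward iterates escape this invariant ``core'' is contracted to $e$, so that $V_{-}=(V_{-}\cap U^{+}_{a})\cdot(V_{-}\cap M_{a})$ and, pushing forward by the $\phi^{-n}$, $\bigcup_{n\ge0}\phi^{-n}(V_{-})\subseteq U^{+}_{a}M_{a}$. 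The second is that for an arbitrary $g\in P^{+}_{a}$ the relatively compact forward orbit $\{\phi^{n}(g)\}_{n\ge0}$ meets only finitely many left cosets of $V$, and a pigeonhole argument combined with tidiness puts a tail $\phi^{N}(g)$ of the orbit into $V=V_{+}V_{-}$, say $\phi^{N}(g)=v_{+}v_{-}$ with $v_{+}\in V_{+}\subseteq M_{a}$ and $v_{-}\in V_{-}$. Applying $\phi^{-N}$ and using that $\phi^{-N}(V_{+})\subseteq V_{+}\subseteq M_{a}$ while $\phi^{-N}(v_{-})\in\bigcup_{n\ge0}\phi^{-n}(V_{-})\subseteq U^{+}_{a}M_{a}$ (and that $M_{a}$ is a group, so the stray $M_{a}$--factors may be collected on one side), one rewrites $g$ as a product of an element of $U^{+}_{a}$ and an element of $M_{a}$. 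Combining the two ingredients yields $P^{+}_{a}=U^{+}_{a}M_{a}$.

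I expect the genuine obstacle to be the tidy-below input, i.e.\ the closedness of $\bigcup_{n\ge0}\phi^{-n}(V_{-})$: without it one only obtains the weaker decomposition $P^{+}_{a}=\overline{U^{+}_{a}}\,M_{a}$, and closing this gap is precisely the refinement of Willis's original tidy-subgroup construction (this is where metrizability is genuinely used). Granting that, the remaining difficulty is purely bookkeeping: one must keep careful track, along the iterates of $\phi$, of which factors in the successive factorizations land in $M_{a}$ and which are contracted, and verify that no factor is lost when the $M_{a}$--parts are amalgamated; this is routine but should be written out with care. The normality statement then also follows a second time as a formal consequence of the decomposition together with the first paragraph.
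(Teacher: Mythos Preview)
The paper does not give its own proof of this theorem: it is quoted verbatim from Baumgartner--Willis~\cite[Proposition~3.4, Corollary~3.17]{BW04} and used as a black box. So there is nothing in the paper to compare your argument against beyond the citation; your choice to go through Willis's tidy subgroup machinery is exactly the framework in which Baumgartner--Willis establish the result, and your first paragraph (the easy inclusion and the normality of $U^{+}_{a}$) is correct.

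That said, the substantive paragraph has two genuine errors. First, with your conventions $\phi(x)=a^{-1}xa$ and $V_{+}=\bigcap_{n\ge0}\phi^{n}(V)$, an element $x\in V_{+}$ satisfies $\phi^{-n}(x)\in V$ for all $n\ge0$, so $V_{+}\subseteq P^{-}_{a}$, \emph{not} $V_{+}\subseteq M_{a}$; only the intersection $V_{0}:=V_{+}\cap V_{-}$ lies in $M_{a}$. Thus your factorization $\phi^{N}(g)=v_{+}v_{-}$ with ``$v_{+}\in V_{+}\subseteq M_{a}$'' does not place $v_{+}$ where you need it, and the subsequent bookkeeping collapses. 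Second, the pigeonhole step is not valid as stated: relative compactness of $\{\phi^{n}(g)\}_{n\ge0}$ only gives that the orbit meets finitely many left $V$--cosets, but there is no reason any particular iterate (let alone a tail) lands in $V$ itself. The actual Baumgartner--Willis argument does not proceed by pushing a tail into $V$; rather, one works with a convergent subsequence of the orbit and exploits the tidy structure (together with the closedness of $V_{--}$, which in the metrizable case forces the nub to be trivial and $U^{+}_{a}$ to be closed) to manufacture the $U^{+}_{a}$-- and $M_{a}$--factors simultaneously. Your instinct that ``tidiness below'' is the crux is right, but the way you feed it into the decomposition needs to be reworked: you should extract an accumulation point $m$ of $\{\phi^{n}(g)\}$, show $m\in M_{a}$ via $\phi$--invariance of the $\omega$--limit set, and then use the tidy factorization inside a single coset (not inside $V$) to control $gm^{-1}$.
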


Notice that Theorem~\ref{thm::levi_decom} applies to our context of $G$ being a closed, strongly transitive and type-preserving subgroup of $\Aut(\Delta)$, where $\Delta$ is a locally finite Euclidean building.


\begin{remark}
In the setting of a locally finite Euclidean building $\Delta$, a subset  $F \subset \Aut(\Delta)$ is called \textbf{bounded} if there exists a point $y \in \Delta$ such that the set $F \cdot y$ is a bounded subset of $\Delta$. For equivalent definitions see Abramenko--Brown~\cite[Lemma~11.32]{AB}. Those characterizations are useful  and used in what follows when working with elements of $P^{+}_{a}$.
\end{remark}

\subsubsection*{`Geometric' Levi decomposition}

This subsection proposes to link the `algebraic' Levi decomposition from Theorem~\ref{thm::levi_decom} with the subgroup $G_c^{0}$ defined in (\ref{equ::geom_parab_subgroup}), a key ingredient being given by the following:




\begin{proposition}
\label{prop::geom_levi_decom}
Let $G$ be a closed, non-compact and type-preserving subgroup of $\Aut(\Delta)$. Let $\mathcal{A}$ be an apartment in $\Delta$ and assume there exists a hyperbolic automorphism $a \in \Stab_{G}(\mathcal{A})$. Denote its attracting endpoint by $\xi_+ \in \partial \mathcal{A}$ and let  $c_+ \in \Ch(\partial \mathcal{A})$, with $\xi_+ \in c_+$. Let $G_{c_+}:= \{ g \in G \; \vert \; g(c_+)=c_+\}$ and $G_{\xi_+}:= \{ g \in G \; \vert \; g(\xi_+)=\xi_+\}$.

Then $G_{c_+} \leq P^{+}_{a} = G_{\xi_+}= G_{\sigma}$, where $\sigma$ is the unique simplex in $\bd \mathcal{A}$ such that $\xi_+$ is contained in the interior of $\sigma$ ($\sigma=\xi_+$ if and only if $\xi_+$ is a vertex). In particular, we obtain that $P^{+}_{a} \cap P^{-}_{a} = G_{\{\xi_-, \xi_+\}}$, where $G_{\{\xi_-, \xi_+\}}:=\{ g \in G \; \vert \; g(\xi_+)=\xi_+ \text{ and } g(\xi_-)=\xi_-\}$.
\end{proposition}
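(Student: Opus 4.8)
The plan is to prove the chain of equalities and inclusions by identifying each group with a stabilizer of the appropriate object at infinity, then invoking the algebraic Levi decomposition from Theorem~\ref{thm::levi_decom}. First I would show $P^{+}_{a} \subseteq G_{\xi_+}$: if $g \in P^{+}_{a}$, then $\{a^{-n} g a^{n}\}_{n}$ is bounded, so this sequence has a convergent subsequence; since $a^{n}$ translates towards $\xi_+$ along the axis of $a$, evaluating at a base point shows $g(\xi_+)$ cannot escape to another endpoint, and a limiting argument on $\Delta \cup \partial \Delta$ (using that $a^{-n}$ contracts towards $\xi_-$ on $\partial \Delta \setminus \{\xi_+\}$, or directly that $\xi_+ = \lim a^n y$ is preserved under the bounded conjugates) forces $g(\xi_+) = \xi_+$. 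Conversely, for $g \in G_{\xi_+}$ one picks a common sector of $\mathcal{A}$ and $g^{-1}(\mathcal{A})$ pointing to $\xi_+$ (these two apartments share such a sector because both contain a ray to $\xi_+$), and then observes that for $n$ large enough $a^{-n} g a^{n}$ moves a fixed base point only within a bounded region determined by that common sector — so the conjugates stay bounded and $g \in P^{+}_{a}$. This gives $P^{+}_{a} = G_{\xi_+}$.

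Next I would identify $G_{\xi_+}$ with $G_{\sigma}$, where $\sigma$ is the smallest simplex of $\partial \mathcal{A}$ whose interior contains $\xi_+$. The inclusion $G_{\sigma} \subseteq G_{\xi_+}$ is trivial once one notes $\xi_+$ is canonically determined by $\sigma$ inside its closure — but actually one wants equality, so the point is: any $g$ fixing $\xi_+$ must permute the (finitely many) simplices of $\partial \Delta$ having $\xi_+$ in their closure, and since $g$ is a simplicial automorphism fixing the interior point $\xi_+$, it must fix $\sigma$ (the unique simplex whose \emph{relative interior} contains $\xi_+$) as a set; being type-preserving it then fixes $\sigma$ pointwise, hence fixes $c_+ \ni \xi_+$ as well when needed. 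Thus $G_{\xi_+} = G_{\sigma}$, and in particular $G_{c_+} \subseteq G_{\xi_+} = P^{+}_{a}$, since every $g$ fixing the chamber $c_+$ fixes the point $\xi_+$ in its interior.

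Finally, applying the same analysis to $a^{-1}$ (whose attracting endpoint is $\xi_-$) gives $P^{-}_{a} = G_{\xi_-}$, and intersecting yields $P^{+}_{a} \cap P^{-}_{a} = G_{\xi_+} \cap G_{\xi_-} = G_{\{\xi_-, \xi_+\}}$, which is the claimed description of $M_a$.

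I expect the main obstacle to be the converse inclusion $G_{\xi_+} \subseteq P^{+}_{a}$, i.e.\ showing that fixing the endpoint at infinity forces the conjugates $a^{-n} g a^{n}$ to remain bounded. The delicate point is that $g$ need not stabilize $\mathcal{A}$, so one must argue geometrically with the Busemann function (horoball) centered at $\xi_+$: $g$ fixing $\xi_+$ means $g$ preserves this horofunction up to an additive constant, and conjugating by the hyperbolic $a^{n}$ (which shifts the Busemann function by $n$ times the translation length) pushes $g$ deeper into the horoball where its displacement of a point on the axis becomes controlled by the displacement of $g$ on a single horosphere — which is bounded because $\Delta$ is locally finite and $g$ is an automorphism. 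Making this displacement estimate precise, using the CAT(0) geometry of $\Delta$ and the fact that apartments containing a ray to $\xi_+$ eventually coincide with $\mathcal{A}$ near the horoball, is the technical heart of the argument.
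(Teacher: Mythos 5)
Your proposal is correct and follows essentially the same route as the paper: $P^{+}_{a}\subseteq G_{\xi_+}$ by boundedness of the conjugates and passing to the limit in the cone topology, $G_{\xi_+}\subseteq P^{+}_{a}$ by bounded displacement along a ray towards $\xi_+$, the identification $G_{\xi_+}=G_{\sigma}$ from type-preservation, and the intersection statement by applying everything to $a^{-1}$. The step you single out as the technical heart is in fact a one-line argument in the paper: choosing a ray $[y,\xi_+)$ in $\mathcal{A}$ containing the points $a^{n}(y)$, the image ray $g([y,\xi_+))$ is asymptotic to it since $g(\xi_+)=\xi_+$, so by CAT(0) convexity $\dist(g(a^{n}y),a^{n}y)\leq \dist(g(y),y)$ and the conjugates $a^{-n}ga^{n}$ stay bounded --- no Busemann-function or horosphere estimate is needed. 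One small caution: your aside that fixing $\sigma$ pointwise ``fixes $c_+$ as well'' is false in general (many chambers contain $\sigma$, and indeed $P^{+}_{a}=G_{c_+}$ fails for general hyperbolic $a$, as the paper remarks); it is harmless here because the statement only needs the trivial inclusion $G_{c_+}\leq G_{\xi_+}$.
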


\begin{proof}

The equality $ G_{\xi_+}= G_{\sigma}$ is immediate as $G$ is type-preserving.

Let us first prove that $P^{+}_{a} \leq G_{\xi_+}$.

\medskip

Take $g \in P^{+}_{a}$ and let $x_0$ be a special vertex of $\mathcal{A}$. By the definition of $P^{+}_{a}$, the set $\{a^{-n}g a^{n}(x_0) \; \vert \; n \in \mathbb{N}\}$ is a bounded subset of $\Delta$ and thus is finite. Extract a subsequence $n_k \to \infty$ such that $a^{-n_k}g a^{n_k}(x_0)=y$, for every $n_k$. As $G$ is type-preserving,  the point $y$ is also a special vertex of $\Delta$. 

Now, because $a^{n_k}(x_0) \to \xi_+$ and $a^{n_k}(y) \to \xi_+$ (in the cone topology of $\Delta \cup \bd \Delta$), one can conclude that $g(\xi_+)=\xi_+$. Therefore $g \in G_{\xi_+}$. 

\medskip
Let us show that $G_{\xi_+} \leq P^{+}_{a}$.
\medskip

Let $g \in G_{\xi_+}$. As $g(\xi_+)=\xi_+$ there exists a geodesic ray in $\mathcal{A}$, say $[y, \xi_+)$, such that $g([y, \xi_+)) \subset \mathcal{A}$ is parallel to $[y, \xi_+)$. Moreover, we have that $\dist_{\mathcal{A}}(g(t),t) = \dist_{\mathcal{A}}(g(y),y)$, for every $t \in [y, \xi_+)$. To prove that $g \in P^{+}_{a}$ it is enough to show that the set $\{a^{-n}g a^{n}(z) \; \vert \; n \in \mathbb{N}\}$ is a bounded subset of $\Delta$, for some point $z \in \mathcal{A}$. Take $z=y$. Because $a$ is a hyperbolic element in $\Stab_{G}(\mathcal{A})$, $a^{n}(y) \in [y, \xi_+)$. From here, we immediately conclude that  $\{a^{-n}g a^{n}(z) \; \vert \; n \in \mathbb{N}\}$ is indeed a bounded set.

\medskip

The fact that $G_{c_+} \leq P^{+}_{a} $ is an immediate consequence of the equality $P^{+}_{a} = G_{\xi_+}$, as $G$ is type-preserving and $G_{c_+} \leq G_{\xi_+}$.
\end{proof}



\begin{remark}
For a general hyperbolic element $a \in \Stab_{G}(\mathcal{A})$ we do not necessarily have the equality $P^{+}_{a} =G_{c_+}$ in Proposition \ref{prop::geom_levi_decom}. However, if $a$ is a strongly regular hyperbolic element of $\Stab_{G}(\mathcal{A})$ (see Caprace--Ciobotaru~\cite[Definition~2.2 and Lemma~2.3 ]{CaCi}), the equality $P^{+}_{a} =G_{c_+}$ holds, as in this case $G_{c_+}=G_{\xi_+}$. In addition, if we take $\Delta$ to be a locally finite tree, the equality $P^{+}_{a} =G_{c_+}$ is always verified as the chambers at infinity are just the ends of the tree, which are points.
\end{remark}

By combining Proposition \ref{prop::geom_levi_decom} and Theorem \ref{thm::levi_decom} we obtain the \textbf{geometric Levi decomposition}:

\begin{corollary}
\label{coro::geom_levi_decom}
Let $G$ be a closed and type-preserving subgroup of $\Aut(\Delta)$. Let $\mathcal{A}$ be an apartment in $\Delta$ and assume there exists a hyperbolic automorphism $a \in \Stab_{G}(\mathcal{A})$. Denote its attracting endpoint by $ \xi_+ \in \partial \mathcal{A}$ and let $\sigma$ be a simplex in $\bd \mathcal{A}$ such that $\xi_+ \in \sigma$. Let also $c_+ \in \Ch(\partial \mathcal{A})$ with $\xi_+ \in c_+$.

Then $G_{\sigma}= U^{+}_{a} (M_a \cap G_{\sigma})$, where $M_{a}:= P^{+}_{a} \cap P^{-}_{a}$. In particular,  $G_{c_+}^{0}= U^{+}_{a} (M_a \cap G_{c_+}^{0})$. In addition $U^{+}_{a} $ is normal in $G_{c_+}^{0}$, respectively, $G_{\sigma}$.
\end{corollary}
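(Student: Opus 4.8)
The plan is to deduce Corollary~\ref{coro::geom_levi_decom} by combining the algebraic Levi decomposition of Theorem~\ref{thm::levi_decom} with the geometric identification of parabolic subgroups established in Proposition~\ref{prop::geom_levi_decom}. The starting observation is that $G$, being a closed subgroup of $\Aut(\Delta)$ for a locally finite Euclidean building $\Delta$, is a totally disconnected, second countable (hence metrizable) locally compact group, so Theorem~\ref{thm::levi_decom} applies verbatim to the hyperbolic element $a \in \Stab_{G}(\mathcal{A})$. Thus $P^{+}_{a} = U^{+}_{a} M_a$ with $M_a = P^{+}_{a} \cap P^{-}_{a}$, and $U^{+}_{a}$ is normal in $P^{+}_{a}$.

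First I would record that, by Proposition~\ref{prop::geom_levi_decom}, $P^{+}_{a} = G_{\xi_+} = G_{\sigma}$, where $\sigma$ is the unique simplex of $\partial \mathcal{A}$ containing $\xi_+$ in its interior; note that if $\sigma$ is replaced by any simplex of $\partial \mathcal{A}$ merely having $\xi_+$ in its (closed) face, the type-preserving hypothesis still forces $G_{\sigma} = G_{\xi_+}$, so we may without loss of generality take $\sigma$ as in Proposition~\ref{prop::geom_levi_decom}. Hence the decomposition $P^{+}_{a} = U^{+}_{a} M_a$ reads $G_{\sigma} = U^{+}_{a} M_a$. To obtain the asserted form $G_{\sigma} = U^{+}_{a}(M_a \cap G_{\sigma})$ I would simply observe that $U^{+}_{a} \subseteq P^{+}_{a} = G_{\sigma}$, so every element of $M_a$ appearing in a product $u m$ with $u \in U^{+}_{a}$, $m \in M_a$, $um \in G_{\sigma}$ automatically satisfies $m = u^{-1}(um) \in G_{\sigma}$; thus $M_a = M_a \cap G_{\sigma}$ already, and the refinement is cosmetic. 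The normality of $U^{+}_{a}$ in $G_{\sigma} = P^{+}_{a}$ is exactly the last assertion of Theorem~\ref{thm::levi_decom}.

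Next I would treat the statement for $G_{c_+}^{0}$. By Proposition~\ref{prop::geom_levi_decom} we have $G_{c_+} \leq P^{+}_{a} = G_{\xi_+}$, and since $G_{c_+}^{0} \subseteq G_{c_+}$ this gives $G_{c_+}^{0} \subseteq P^{+}_{a}$. The key point to check is that $U^{+}_{a} \subseteq G_{c_+}^{0}$: indeed every $u \in U^{+}_{a}$ fixes $\xi_+$ (as $U^{+}_{a} \leq P^{+}_{a} = G_{\xi_+}$), it is type-preserving, so it fixes $c_+$, and moreover $u$ must fix at least one vertex of $\Delta$ — this is where I would use that a non-trivial element of the contraction group is not hyperbolic (a hyperbolic element cannot contract to $e$ under conjugation by powers of $a$, since its translation length is conjugation-invariant), hence $u$ is elliptic and fixes a vertex, so $u \in G_{c_+}^{0}$ by the defining condition~(\ref{equ::geom_parab_subgroup}). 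Given $U^{+}_{a} \subseteq G_{c_+}^{0} \subseteq P^{+}_{a} = U^{+}_{a} M_a$, intersecting with $G_{c_+}^{0}$ and using the modular-law-type argument from the previous paragraph yields $G_{c_+}^{0} = U^{+}_{a}(M_a \cap G_{c_+}^{0})$, and normality of $U^{+}_{a}$ in $G_{c_+}^{0}$ follows from its normality in the larger group $P^{+}_{a}$.

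The main obstacle, and the only place demanding real care, is the verification that $U^{+}_{a} \subseteq G_{c_+}^{0}$ — specifically the claim that every element of the contraction group fixes some vertex of $\Delta$. The cleanest route is the translation-length argument sketched above; alternatively one can argue that if $u \in U^{+}_{a}$ then the orbit $\{a^{-n} u a^{n}(x_0)\}_{n}$ is bounded and converges to $u$ acting on the limit, forcing $u$ to have bounded orbit on a suitable horoball and hence (by the structure of elliptic versus hyperbolic isometries of $\CAT(0)$ spaces and the discreteness of the building) to be elliptic. Everything else in the corollary is a formal consequence of Theorem~\ref{thm::levi_decom} and Proposition~\ref{prop::geom_levi_decom}, together with the bookkeeping remark that $M_a \subseteq G_{\sigma}$ and $M_a \cap G_{c_+}^{0}$ absorb the relevant factors because $U^{+}_{a}$ is contained in both $G_{\sigma}$ and $G_{c_+}^{0}$.
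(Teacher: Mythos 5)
Your overall architecture coincides with the paper's: establish the chain $U^{+}_{a}\leq G_{c_+}^{0}\leq G_{\sigma}\leq P^{+}_{a}$ and then apply Theorem~\ref{thm::levi_decom}, with the modular-law bookkeeping (if $U^{+}_{a}\leq H\leq P^{+}_{a}=U^{+}_{a}M_a$, then $H=U^{+}_{a}(M_a\cap H)$ and normality of $U^{+}_{a}$ in $H$ is inherited from $P^{+}_{a}$); that part of your write-up is correct and is exactly the paper's proof. The genuine gap is in the one step that actually requires care, namely the inclusion $U^{+}_{a}\subseteq G_{c_+}^{0}$. You deduce $u(c_+)=c_+$ from ``$u$ fixes $\xi_+$ and is type-preserving''. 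This implication fails unless $\xi_+$ lies in the interior of $c_+$, i.e.\ unless $a$ is regular: in general the support $\sigma$ of $\xi_+$ is a proper face of $c_+$, and a type-preserving element fixing $\xi_+$ only fixes $\sigma$ pointwise; it may permute the chambers of $\St(\sigma)$. For instance, for $G=\SL(3,\QQ_p)$ and $a=\diag(p,p,p^{-2})$, the point $\xi_+$ is a vertex at infinity, $G_{\xi_+}=P^{+}_{a}$ is a maximal parabolic, and its Levi factor moves the chambers at infinity containing $\xi_+$. The same faulty implication underlies your claim that $G_{\sigma}=G_{\xi_+}$ for every simplex $\sigma$ with $\xi_+\in\sigma$ (only $G_{\sigma}\leq G_{\xi_+}$ holds in general) and the remark that ``$M_a=M_a\cap G_{\sigma}$ is cosmetic'', which is correct only when $\sigma$ is the support of $\xi_+$. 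Since the corollary is later applied to possibly singular elements (the element $b$ of Proposition~\ref{prop::lem_analog}), the regular case does not suffice.

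A correct verification must use the contraction property itself, not merely $u(\xi_+)=\xi_+$. From $a^{-n}ua^{n}\to e$ one gets, for every $R$, that $u$ fixes $B(a^{n}x_0,R)$ pointwise for all large $n$; note this already shows $u$ is elliptic and fixes vertices, with no need for your translation-length or $\CAT(0)$ semisimplicity detour (which, as written, only excludes hyperbolicity, not parabolicity). One then argues that $\Fix(u)\cap\mathcal{A}$, being a convex subcomplex of the Coxeter complex $\mathcal{A}$ and hence an intersection of half-apartments each containing all these balls, contains a sector pointing to $c_+$: a root of $\mathcal{A}$ containing balls of unbounded radius centred along the ray towards $\xi_+$ must have $\xi_+$ in the interior of its boundary at infinity, hence (chambers of $\bd\mathcal{A}$ do not cross walls) its boundary contains the whole closed star $\St(\sigma)$, so the root contains a sector based at one of the ball centres pointing to $c_+$. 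Therefore $u$ fixes such a sector pointwise, giving $u\in G_{c_+}^{0}$ as defined in~(\ref{equ::geom_parab_subgroup}). (Quoting Corollary~\ref{cor::equal_contr_root_groups} instead would be circular, since its proof uses the present corollary.) With $U^{+}_{a}\leq G_{c_+}^{0}\leq G_{\sigma}\leq P^{+}_{a}$ secured, the rest of your argument goes through verbatim.
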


\begin{proof}
By the definition of $U^{+}_{a}$ one can easily verify that $U^{+}_{a} \leq G_{c_+}^{0} \leq G_{\sigma} \leq P^{+}_{a}$. The conclusion follows by applying Theorem \ref{thm::levi_decom}. 
\end{proof}

Furthermore, by Proposition \ref{prop::gen_parab_subgroups} and Corollary \ref{coro::geom_levi_decom} we obtain:

\begin{corollary}
\label{coro::gen_parab_subgroups}
Let $G$ be a closed, strongly transitive and type-preserving subgroup of $\Aut(\Delta)$. Let  $\mathcal{A}$ be an apartment of $\Delta$ and let $a \in \Stab_{G}(\mathcal{A})$ be a hyperbolic automorphism.
 
Then $\overline{\langle U^{+}_{a},U^{-}_{a} \rangle}$ is normal in $G$. 
\end{corollary}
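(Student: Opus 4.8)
The plan is to reduce the normality of $\overline{\langle U^{+}_{a},U^{-}_{a} \rangle}$ to the fact that it contains a generating set for $G$ which is normalized by the whole group. Let $c_+ \in \Ch(\partial \mathcal{A})$ be a chamber with the attracting endpoint $\xi_+$ of $a$ in its closure, and let $c_-$ be the opposite chamber in $\partial \mathcal{A}$, corresponding to the repelling endpoint $\xi_-$. By Proposition~\ref{prop::gen_parab_subgroups} we have $G = \langle G_{c_+}^{0}, G_{c_-}^{0} \rangle$, so it suffices to show that $G_{c_+}^{0}$ and $G_{c_-}^{0}$ both normalize $N:=\overline{\langle U^{+}_{a},U^{-}_{a} \rangle}$. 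By Corollary~\ref{coro::geom_levi_decom}, $G_{c_+}^{0} = U^{+}_{a}\,(M_a \cap G_{c_+}^{0})$ with $U^{+}_{a}$ normal in $G_{c_+}^{0}$; similarly $G_{c_-}^{0} = U^{-}_{a}\,(M_a \cap G_{c_-}^{0})$ with $U^{-}_{a}$ normal in $G_{c_-}^{0}$ (applying the corollary to $a^{-1}$, whose attracting endpoint is $\xi_-$). Since $U^{+}_{a} \subset N$, it is enough to check that $M_a$ normalizes both $U^{+}_{a}$ and $U^{-}_{a}$, hence normalizes $N$.

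The key step is therefore: $M_a = P^{+}_{a} \cap P^{-}_{a}$ normalizes $U^{+}_{a}$ and $U^{-}_{a}$. This is where I expect the main (though not deep) work to lie. For $U^{+}_{a}$: by Theorem~\ref{thm::levi_decom}, $U^{+}_{a}$ is normal in $P^{+}_{a}$, and $M_a \le P^{+}_{a}$, so $M_a$ normalizes $U^{+}_{a}$ immediately. Symmetrically, $U^{-}_{a}$ is normal in $P^{-}_{a}$, and $M_a \le P^{-}_{a}$, so $M_a$ normalizes $U^{-}_{a}$. Consequently every element of $M_a$ normalizes $\langle U^{+}_{a}, U^{-}_{a}\rangle$ and, by continuity of conjugation, its closure $N$. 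Combined with the inclusions $U^{\pm}_{a}\subset N$, this shows $G_{c_+}^{0} = U^{+}_{a}(M_a\cap G_{c_+}^{0})$ normalizes $N$, and likewise $G_{c_-}^{0}$ normalizes $N$.

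Finally, since $G = \langle G_{c_+}^{0}, G_{c_-}^{0}\rangle$ and both generating subgroups normalize $N$, the subgroup $N = \overline{\langle U^{+}_{a},U^{-}_{a} \rangle}$ is normal in $G$, as claimed. The only subtlety to be careful about is that one must verify the hypotheses of the results being invoked: Proposition~\ref{prop::gen_parab_subgroups} requires $G$ closed, strongly transitive and type-preserving (which is assumed); Corollary~\ref{coro::geom_levi_decom} and Theorem~\ref{thm::levi_decom} require the hyperbolic element to lie in $\Stab_{G}(\mathcal{A})$ and the building to be locally finite (both satisfied here). I do not anticipate a genuine obstacle; the argument is a clean assembly of Proposition~\ref{prop::gen_parab_subgroups}, Corollary~\ref{coro::geom_levi_decom} and the normality clauses in the Levi decomposition.
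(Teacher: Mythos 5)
Your proposal is correct and follows essentially the same route as the paper: reduce to $G=\langle G_{c_+}^{0},G_{c_-}^{0}\rangle$ via Proposition~\ref{prop::gen_parab_subgroups}, then use the geometric Levi decomposition of Corollary~\ref{coro::geom_levi_decom} together with the normality of $U^{\pm}_{a}$ in the parabolics (Theorem~\ref{thm::levi_decom}) to see that both $G_{c_\pm}^{0}$ normalize $\overline{\langle U^{+}_{a},U^{-}_{a}\rangle}$. You merely spell out the step that $M_a\le P^{\pm}_{a}$ normalizes $U^{\pm}_{a}$, which the paper leaves implicit.
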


\begin{proof}
Let $c_-,c_+ \in \Ch(\bd \mathcal{A})$ be two opposite ideal chambers containing the repelling point and respectively the attracting point of the hyperbolic element $a$.  By Proposition \ref{prop::gen_parab_subgroups}  we have that $G=\langle G_{c_-}^{0}, G_{c_+}^{0}  \rangle$.  Therefore, to prove that $\overline{\langle U^{+}_{a},U^{-}_{a} \rangle}$ is normal in $G$, it is enough to verify that, for every $g \in G_{c_{\pm}}^{0}$, we have that $g U^{\pm}_{a}g^{-1} \in \langle U^{+}_{a},U^{-}_{a} \rangle$. Indeed, using the decomposition of $G_{c_{\pm}}^{0}$ given by Corollary \ref{coro::geom_levi_decom}, and the fact that $U^{\pm}_{a} $ is normal in $G_{c_\pm}^{0}$, the conclusion follows. 
\end{proof}

\subsubsection*{The proof of the Howe--Moore property}

Using all the ingredients presented in the above subsections, the goal here is to provide the proof of the following theorem:

\begin{theorem}
\label{thm::H-M-buildings}
Let $G$ be an isotropic simple algebraic group over a non Archimedean local field or a closed, topologically simple subgroup of $\Aut(T)$ that acts $2$--transitively on the boundary $\partial T$, where $T$ is a bi-regular tree with valence $\geq 3$ at every vertex. Then $G$ admits the Howe--Moore property.
\end{theorem}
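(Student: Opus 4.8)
The plan is to reduce to the algebraic criterion of Theorem~\ref{Crit_H_M}. Both classes of groups in the statement are second countable, locally compact groups (for the algebraic case this is the content of Remark~\ref{rem::exam_sep_loc_com}; for trees it follows since $\Aut(T)$ is second countable when $T$ is locally finite). So it suffices to verify the two hypotheses of Theorem~\ref{Crit_H_M}: the polar decomposition $G = K_1 A^+ K_2$, and the condition that every sequence $\alpha = \{g_n\}_{n>0} \subset A^+$ escaping to infinity has a subsequence $\beta$ with $G = \overline{\langle U^+_\beta, U^-_\beta\rangle}$.

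\textbf{Step 1: reduce to the building picture.} First I would recall that in both cases $G$ acts continuously, properly, by type-preserving automorphisms and strongly transitively on a locally finite thick Euclidean building $\Delta$: for an isotropic simple algebraic group over a non-Archimedean local field this is the Bruhat--Tits theory; for a closed topologically simple $2$-transitive-on-$\partial T$ subgroup of $\Aut(T)$ this is the equivalence recorded in Example~\ref{exam::trees_B_M} (strong transitivity on $T$ $\Leftrightarrow$ $2$-transitivity on $\partial T$), with $\Delta = T_{bireg}$. One should also note that topological (resp.\ abstract) simplicity guarantees $G$ is non-compact (it has no proper open subgroup of finite index, or directly: it acts without global fixed point on $\partial T$), so the machinery applies. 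By Remark~\ref{rem::HM_type_pres} we may and do assume $G$ is type-preserving.

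\textbf{Step 2: polar decomposition and hyperbolic elements.} Hypothesis i) of Theorem~\ref{Crit_H_M} is exactly Lemma~\ref{lem::polar_decom}: $G = \Stab_G(x_0)\, A^+\, \Stab_G(x_0)$ with $\Stab_G(x_0)$ compact open and $A^+$ the positive sub-semigroup of the translation lattice $A < W$ (in the tree case $A^+ = \{a^n : n\geq 1\}$ for a hyperbolic $a$ of translation length $2$, as in Example~\ref{exam::trees_B_M}). The elements of $A^+$ lift to hyperbolic elements of $G$ stabilizing the fixed apartment $\mathcal{A}$; abusing notation, identify them with such lifts. Note these lifts can be chosen pairwise commuting: in the algebraic-group case $A$ lifts to the abelian maximal split torus (Remark~\ref{rem::algabraic_groups_local_fields}), and in the tree case $A = \langle a\rangle$ is already cyclic.

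\textbf{Step 3: verify hypothesis ii).} Given $\alpha = \{g_n\} \subset A^+$ with $g_n \to \infty$, I would argue as follows. Each $g_n$ is a translation of $\mathcal{A}$ by a vector $v_n$ in the lattice $A \cong \mathbb{Z}^m$ lying in the sector direction defining $A^+$; since $g_n \to \infty$, $\|v_n\| \to \infty$. Passing to a subsequence $\beta$, we may assume $v_n / \|v_n\|$ converges to a unit vector $\xi$, determining a fixed ideal point $\xi_+ \in \partial\mathcal{A}$ and an ideal chamber $c_+ \in \Ch(\partial\mathcal{A})$ with $\xi_+ \in c_+$; let $c_-$ be its opposite and $\xi_-$ the corresponding point. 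One checks from the definition that $U^+_\beta \supseteq U^+_a$ for any single $a \in A^+$ whose translation direction lies in the (open cone around the) $\xi$-direction — more precisely, conjugation by $g_n$ contracts precisely those root groups whose directions pair positively with $v_n$, and for $n$ large this includes a fixed hyperbolic $b \in A^+$ pointing toward $\xi_+$; and similarly $U^-_\beta \supseteq U^-_b$. By Proposition~\ref{prop::gen_parab_subgroups}, $G = \langle G_{c_+}^0, G_{c_-}^0\rangle$, and by Corollary~\ref{coro::geom_levi_decom} applied to $b$, $G_{c_+}^0 = U^+_b (M_b \cap G_{c_+}^0)$ with $U^+_b$ normal in $G_{c_+}^0$, and symmetrically for $c_-$. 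Hence $\overline{\langle U^+_b, U^-_b\rangle}$ is normal in $G$ (this is Corollary~\ref{coro::gen_parab_subgroups}), and it is non-trivial and non-discrete since $U^\pm_b$ are non-trivial (the building is thick, so there are genuine contractions). Invoking topological (resp.\ abstract) simplicity of $G$, any non-trivial closed normal subgroup is all of $G$, so $G = \overline{\langle U^+_b, U^-_b\rangle} \subseteq \overline{\langle U^+_\beta, U^-_\beta\rangle} \subseteq G$, giving equality.

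\textbf{Step 4: conclude.} With both hypotheses of Theorem~\ref{Crit_H_M} verified, $G$ has the Howe--Moore property. Combined with Section~\ref{subsec::real_Lie_groups} for real Lie groups, this establishes Theorem~\ref{thm::H-M-buildings_2}.

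\textbf{Main obstacle.} The delicate point is Step~3: matching the \emph{dynamical} contraction groups $U^\pm_\beta$ attached to a sequence with the \emph{geometric} parabolic/Levi picture attached to a single hyperbolic element pointing in the limiting direction. One must be careful that a sequence $g_n$ with wandering direction could a priori have strictly larger contraction group than any single $b$ (picking up root groups from several walls), but for the argument we only need the inclusion $U^\pm_\beta \supseteq U^\pm_b$ for a suitable fixed $b$, which is the easy direction; the subtlety is ensuring such a $b$ exists after passing to a subsequence, i.e.\ that the limiting direction $\xi$ genuinely lies in the interior of the positive cone so that some lattice element $b \in A^+$ points strictly toward $\xi_+$ and hence $g_n^{-1} b g_n \to$ contraction behaviour is inherited. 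In the tree case $m = 1$ and this is trivial; in higher rank it requires the observation that $A^+$ meets every open sub-cone of the defining Weyl cone, which follows from $A \cong \mathbb{Z}^m$ being a lattice of full rank. The simplicity hypothesis then does the rest of the work for free.
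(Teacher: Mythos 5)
Your overall route coincides with the paper's: reduce to Theorem~\ref{Crit_H_M}, get hypothesis i) from Lemma~\ref{lem::polar_decom} (with the abelianness of the lift of $A^{+}$ handled exactly as in Remark~\ref{rem::algabraic_groups_local_fields} and Example~\ref{exam::trees_B_M}), and get hypothesis ii) by extracting a limiting direction, producing one hyperbolic element $b$ with $U^{\pm}_{b}\leq U^{\pm}_{\beta}$, and then combining Proposition~\ref{prop::gen_parab_subgroups}, Corollary~\ref{coro::geom_levi_decom}, Corollary~\ref{coro::gen_parab_subgroups} and simplicity (Tits' theorem in the algebraic case) to force $\overline{\langle U^{+}_{b},U^{-}_{b}\rangle}=G$. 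The tree case in your Step~3 is complete and is exactly the paper's argument. However, in the higher-rank algebraic case Step~3 has a genuine gap, precisely at the point your ``main obstacle'' paragraph tries to dismiss. After passing to a subsequence, the limiting direction $\xi$ need \emph{not} lie in the interior of the Weyl cone: it can lie in a proper face, as for $g_n=\diag(p^{-n},1,p^{n})$ in $\SL(3,\QQ_p)$, and your remark that $A^{+}$ meets every open sub-cone does not help, because the problem is not the existence of lattice points near $\xi$ but that any $b$ with regular (interior) direction contracts too much: if $\xi$ lies on the wall of a root $h$, the component of $v_n$ along $h$ may stay bounded, so $U_{h}\not\leq U^{+}_{\beta}$, while $U_{h}\leq U^{+}_{b}$ for every regular $b$. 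Thus the inclusion $U^{+}_{b}\leq U^{+}_{\beta}$ fails for the $b$ you select. The correct choice, as in Proposition~\ref{prop::lem_analog}, is $b=\gamma_{i_1}\cdots\gamma_{i_l}$, a sub-product of the basis of $A^{+}$ whose attracting endpoint lies in the interior of the \emph{same} simplex $\sigma\subset\bd\mathcal{A}$ as $\xi$, so that $b$ contracts only the root groups $U_{h}$ with $\St(\sigma)\subset h$, and these are also contracted by $\beta$.

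Moreover, even with the right $b$, the inclusion $U^{\pm}_{b}\leq U^{\pm}_{\beta}$ is not something one ``checks from the definition''. An element $u\in U^{+}_{b}$ is a priori only known to fix larger and larger balls centred along the translation axis of $b$, whereas the points $g_n(x_0)$ may drift linearly away from that axis, so contraction along $\beta$ does not follow formally. The paper needs two non-formal inputs specific to the Bruhat--Tits setting: the identification $U^{+}_{b}=\langle U_{h} : \St(\sigma)\subset h\rangle$ via the root group datum (Corollary~\ref{cor::equal_contr_root_groups}, resting on Corollary~\ref{coro::geom_levi_decom} and Ronan's theory), and the quantitative fact that an affine root group $U_{\mathcal{H}}$ fixes pointwise balls whose radius grows with the distance to $\bd\mathcal{H}$ (Proposition~\ref{prop::contr_prop}, i.e.\ Bruhat--Tits 7.4.33), which is what gives $U_{h}\leq U^{+}_{\beta}$ (Corollary~\ref{cor::contr_groups}). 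Your phrase ``conjugation by $g_n$ contracts precisely those root groups whose directions pair positively with $v_n$'' is both imprecise (the pairing must tend to infinity, not merely be positive -- which is exactly why singular limit directions are the delicate case) and unjustified without these inputs. The remaining steps, normality via Proposition~\ref{prop::gen_parab_subgroups} and Corollary~\ref{coro::geom_levi_decom}, then topological simplicity in the tree case and Tits' simplicity theorem in the algebraic case, agree with the paper and are fine.
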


To prove Theorem~\ref{thm::H-M-buildings}, in the case of algebraic groups, it remains to verify the second condition given by Theorem~\ref{Crit_H_M}, namely, the analogue of Lemma~\ref{lem::existence_hyp_element}. As in the case of connected simple real Lie groups with finite center, the analogue lemma uses the theory of root group datum related to Euclidean buildings; therefore, we briefly recall some definitions and state some known properties of this theory from Weiss~\cite{Weiss}, Abramenko--Brown ~\cite{AB} and Ronan~\cite{Ron89}. We mention that the proof of the analogue of Lemma~\ref{lem::existence_hyp_element}, stated in Proposition~\ref{prop::lem_analog} for the case of Euclidean buildings, works in the same spirit as for real Lie groups.

\begin{definition}
\label{def::root_group_sph}
Let $X$ be a Euclidean or a spherical building.  A half-apartment $h$ of $X$ is called a \textbf{root of $X$}. By abuse of notation, we use $\bd h$ to denote the boundary wall in $X$ which is determined by $h$. 

Moreover, if $X$ is a spherical building, the \textbf{root group} $U_{h}$ corresponding to a root $h$ of $X$ is defined to be the set of all $g \in \Aut(X)$ such that $g$ fixes  pointwise $h$ and  also the star of every panel contained in $h \setminus \bd h$. We denote $U_{h}^{*}:=U_{h} \setminus \{ \id\}$.
\end{definition}

\begin{example}
\label{exm::trees_roots}
If $X$ is a locally finite tree without vertex of valence one, then a root $h$ of $X$ corresponds to an infinite ray in the tree and its boundary $\bd h$ is just the base-vertex in $X$ of that infinite ray.  Denote by $\bd X$ the boundary at infinity of the tree $X$. For a point $\xi \in \bd X$, its corresponding root group is the contraction group $U^{+}_{a} \leq \Aut(X)$ defined in (\ref{equ::contr_subgroup}) above and where $a$ is a hyperbolic element of $\Aut(X)$ with $\xi$ its attracting endpoint (see Caprace--De Medts~\cite[Lemma 2.7]{CaDM}). 
\end{example}

\begin{definition}
\label{def::Moufang_prop}
Let $X$ be a thick spherical building. For a root $h$ of $X$, we denote by $\mathcal{A}(h)$ the set of all apartments in $X$ that contain $h$. We say that \textbf{$X$ is Moufang} if, for every root $h$ of $X$, the root group $U_{h} < \Aut(X)$ is transitive on $\mathcal{A}(h)$. 
\end{definition}

It is immediate that the boundary at infinity of a locally finite tree without vertex of valence one is Moufang.

Recall that for a semi-simple algebraic group $\mathbb{G}$ over a non Archimedean local field, by Bruhat and Tits~\cite{BT72} one associates a locally finite thick Euclidean building $\Delta$ where $\mathbb{G}$ acts by automorphisms and strongly transitively. More precisely:

\begin{definition}
\label{def::alg_groups}
Let $\mathbb{G}$ be a semi-simple algebraic group over a non Archimedean local field $k$, of rank $\geq 1$. Let $G=\mathbb{G}(k)$ be the group of all $k$--rational points of $\mathbb{G}$. By~\cite{BT72}, to the group $G$ one associates a locally finite thick Euclidean building $\Delta$ where $G$ acts by type-preserving automorphisms and strongly transitively. Moreover, by~\cite{BT72}, the spherical building $\bd \Delta$ at infinity of $\Delta$ is Moufang. Let $G^{+}:=  \langle U_h \leq G \; \vert \; h \text{ is a root of } \bd \Delta \rangle$. It is known that $G^{+}$ is a closed normal subgroup of $G$ and that the factor group $G/G^{+}$ is compact (see Margulis~\cite[Thm.~2.3.1]{Mar91}). We call $G^{+}$ \textbf{an isotropic simple algebraic group over the non Archimedean local field $k$}. From its definition, notice that $G^{+}$ acts by type-preserving automorphisms and also strongly transitively on $\Delta$. 
\end{definition}

\begin{proposition}(See Weiss~\cite[Prop.~13.2 and~13.5]{Weiss})
\label{prop::root_groups_at_infinity}
Let $\Delta$ be a Euclidean building such that its corresponding building at infinity $\bd \Delta$ is Moufang. Let $\mathcal{A}$ be an apartment of $\Delta$ and let $h$ be a root of $\bd \mathcal{A}$. For $u \in U_{h}^{*}$ let $h_u := \mathcal{A} \cap u(\mathcal{A})$. 

Then $h_u$ is a root of $\mathcal{A}$ such that its boundary at infinity $h_u(\infty)$ is $h$. Moreover, $u$ acts trivially on $h_u$. Conversely, let $\mathcal{H}$ be a root of $\mathcal{A}$ such that its boundary at infinity $\mathcal{H}(\infty)$ is $h$. Then there exists $u \in U_{h}^{*}$ such that $\mathcal{H}=h_u$.
\end{proposition}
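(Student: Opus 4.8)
The goal is to prove Proposition~\ref{prop::root_groups_at_infinity}, which identifies, for a root $h$ of an apartment at infinity $\bd\mathcal{A}$ and a nontrivial element $u \in U_h^*$, the half-apartment $h_u := \mathcal{A} \cap u(\mathcal{A})$ of $\mathcal{A}$, and establishes the converse parametrization. The plan is to exploit the Moufang structure of $\bd\Delta$ together with the standard dictionary between sectors in the Euclidean building $\Delta$ and chambers/roots in $\bd\Delta$. First I would recall that a root group element $u$ of the spherical building $\bd\Delta$ fixes $h$ pointwise together with the star of every panel in $h \setminus \bd h$; I would lift this to $\Delta$ via the observation that $u$ is induced by an automorphism of $\Delta$ (elements of $U_h$ sit inside $\Aut(\Delta)$ acting on the Euclidean building). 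The key geometric fact to establish is that $\mathcal{A} \cap u(\mathcal{A})$ is nonempty and is precisely a half-apartment, not something smaller or larger.

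The main steps, in order: (1) Show $\mathcal{A} \cap u(\mathcal{A})$ contains a sector pointing toward $h$. Since $u$ fixes $h$ pointwise (as a simplicial structure on $\bd\mathcal{A}$), any sector $Q$ of $\mathcal{A}$ whose boundary at infinity lies in $h$ — more precisely whose sector-face points into $h$ — is mapped by $u$ to a sector asymptotic to $Q$; by the fact that two asymptotic sectors in a building share a subsector, and because $u$ fixes enough of the combinatorial structure at infinity, one concludes $u$ fixes a subsector of $Q$. Hence $\mathcal{A} \cap u(\mathcal{A})$ contains a sector, so it is a nonempty convex subcomplex of $\mathcal{A}$ that is "large at infinity" in the direction $h$. (2) Show $\mathcal{A} \cap u(\mathcal{A})$ is a root of $\mathcal{A}$. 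It is an intersection of two apartments, hence convex; it contains a sector pointing toward $h$ but cannot be all of $\mathcal{A}$ because $u \neq \id$ (so $u(\mathcal{A}) \neq \mathcal{A}$, using that $U_h$ acts freely enough — if $u$ fixed $\mathcal{A}$ it would be an element of the stabilizer of $\bd\mathcal{A}$ fixing $h$, forcing triviality by the Moufang/BN-pair axioms). A convex subcomplex of a Euclidean apartment that is properly contained in it yet contains a full sector must be a half-apartment $\mathcal{H}$ with $\mathcal{H}(\infty) = h$. (3) $u$ acts trivially on $h_u$: this is immediate since $h_u \subset \mathcal{A} \cap u(\mathcal{A})$ and $u$ restricted to $u^{-1}(\mathcal{A} \cap u(\mathcal{A}))$... more carefully, $x \in h_u$ means $x \in \mathcal{A}$ and $x = u(y)$ for some $y \in \mathcal{A}$; using that $u$ fixes the sector at infinity direction and the retraction arguments one gets $u(x) = x$. (4) The converse: given a root $\mathcal{H}$ of $\mathcal{A}$ with $\mathcal{H}(\infty) = h$, apply the Moufang transitivity of $U_h$ on $\mathcal{A}(h)$, the set of apartments of $\bd\Delta$ containing $h$. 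Pick an apartment $\mathcal{B}$ of $\Delta$ containing $\mathcal{H}$ and differing from $\mathcal{A}$ across the wall $\bd\mathcal{H}$ (exists by thickness); then $\bd\mathcal{B}$ and $\bd\mathcal{A}$ are two apartments of $\bd\Delta$ sharing $h$, so there is $u \in U_h$ with $u(\bd\mathcal{B}) = \bd\mathcal{A}$, and one checks this $u$ maps $\mathcal{H}$ correctly so that $h_u = \mathcal{H}$; nontriviality of $u$ follows since $\mathcal{B} \neq \mathcal{A}$.

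I expect step (1)–(2), establishing that the intersection is exactly a half-apartment (and in particular nonempty and not everything), to be the main obstacle: it requires care with the correspondence between the combinatorial/simplicial spherical building $\bd\Delta$ and the metric/polyhedral Euclidean building $\Delta$, and with what "$u$ fixes the star of panels in $h \setminus \bd h$" translates to in terms of fixing sectors and sector-panels in $\Delta$. The cleanest route is probably to cite the structure theory directly — this is essentially Weiss~\cite{Weiss}, Propositions~13.2 and~13.5, and the proof there proceeds via the building-theoretic notion of the "wall" $\bd h$ and the induced action on the Euclidean apartment; alternatively one can phrase everything in terms of projections $\mathrm{proj}$ of chambers onto residues and the gate property. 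Steps (3) and (4) are then comparatively routine consequences of convexity, thickness, and Moufang transitivity.
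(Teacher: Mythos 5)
You should first note that the paper offers no proof of this proposition at all: it is imported wholesale from Weiss~\cite[Prop.~13.2 and~13.5]{Weiss}, so there is no internal argument to compare against, and your text is an attempt to reconstruct the external proof. As a standalone argument it has a genuine gap at its core, in steps (1)--(3). The hypothesis on $u$ --- that it fixes $h$ pointwise and fixes the star of every panel of $h\setminus\bd h$ --- is data entirely at infinity, and from it you infer that $u$ fixes a subsector of $\mathcal{A}$ pointwise, via ``asymptotic sectors share a subsector'' plus ``$u$ fixes enough of the combinatorial structure at infinity''. That inference is a non sequitur: two asymptotic sectors share a subsector as \emph{subsets}, which says nothing about pointwise fixing, and an automorphism can fix a great deal at infinity while fixing no point of $\Delta$ whatsoever (a hyperbolic translation stabilizing $\mathcal{A}$ fixes $\bd\mathcal{A}$, hence $h$, pointwise). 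Showing that a root-group element of $\bd\Delta$ acts on $\Delta$ with a large pointwise-fixed set is precisely the nontrivial content of Weiss's Chapter~13; for Bruhat--Tits buildings it rests on the valuation of the root datum, whose quantitative form is exactly Proposition~\ref{prop::contr_prop} (Bruhat--Tits~7.4.33). Your step (3) is correspondingly circular: that $u$ fixes $h_u=\mathcal{A}\cap u(\mathcal{A})$ pointwise is part of what must be proved, not a formal consequence of the definition of $h_u$.

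Step (2) also rests on a false intermediate claim: a convex subcomplex of a Euclidean apartment that is proper and contains a full sector need not be a half-apartment --- a sector itself, or the intersection of two half-apartments with non-parallel walls, are counterexamples. Intersections of apartments are intersections of half-apartments; to conclude that $\mathcal{A}\cap u(\mathcal{A})$ is bounded by a \emph{single} wall, and one whose direction at infinity is exactly $\bd h$, you must make essential use of the fixing of the stars of all panels in $h\setminus\bd h$, which your sketch never actually does. The converse direction (4) is the standard Moufang-transitivity argument and is fine in outline (transitivity of $U_h$ on apartments of $\bd\Delta$ containing $h$, plus thickness to produce an apartment $\mathcal{B}$ with $\mathcal{A}\cap\mathcal{B}=\mathcal{H}$), but its final step again presupposes the forward direction's pointwise fixing. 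Given that the paper itself simply cites Weiss here, the realistic options are to do the same, or to supply the missing fixed-point input explicitly rather than deriving it from asymptotics of sectors.
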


\begin{definition}
\label{def::root_group_aff}
Let $\Delta$ be a Euclidean building such that its corresponding building at infinity $\bd \Delta$ is Moufang. Let $\mathcal{H}$ be a root of an apartment $\mathcal{A}$ of $\Delta$ and denote $h := \mathcal{H}(\infty)$ the corresponding root in $\bd \mathcal{A}$. With respect to $U_{h}$, the \textbf{affine root group corresponding to $\mathcal{H}$} is the set $U_{\mathcal{H}}:= \{u \in U_{h}^{*}\;\vert \; h_u=\mathcal{H}\} \cup \id$, where $h_u := \mathcal{A} \cap u(\mathcal{A})$, for $u \in U_{h}^{*}$.
\end{definition}

\begin{example}
\label{exm::trees_affine_roots}
Let $X$ is a locally finite tree without vertex of valence one as in Example~\ref{exm::trees_roots}. Let $\mathcal{H}$ be an infinite ray of $X$ and denote by $h \in \bd X$ the endpoint at infinity of $\mathcal{H}$. Then $U_{\mathcal{H}}:= \{g \in U_{a}^{+}  \; \vert \; h_g=\mathcal{H} \} \cup \id$, where $U_{a}^{+}$ is defined like in Example~\ref{exm::trees_roots}.
\end{example}

\begin{proposition}(See Bruhat--Tits~\cite[Prop.~7.4.33]{BT72})
\label{prop::contr_prop}
Let $\mathbb{G}$ be a semi-simple algebraic group over a non Archimedean local field $k$ and let $G=\mathbb{G}(k)$ be the group of all $k$--rational points of $\mathbb{G}$. Denote by $\Delta$ the corresponding locally finite thick Euclidean building on which $G$ acts by type-preserving automorphisms and strongly transitively.  Let $\mathcal{H}$ be a root of $\Delta$ and let $x \in \mathcal{H}$. Then the radius of the ball of  $\Delta$ around $x$ which is fixed pointwise by $U_{\mathcal{H}} < G$ goes to infinity as the distance from $x$ to $\bd \mathcal{H}$ goes to infinity. 
\end{proposition}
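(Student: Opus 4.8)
The plan is to reduce the statement to a computation inside a single apartment and then invoke the contraction property of affine root groups. First I would recall the structure: an affine root group $U_{\mathcal{H}}$ is a subset of the spherical root group $U_h$ where $h = \mathcal{H}(\infty)$, and by Proposition~\ref{prop::root_groups_at_infinity} each nontrivial $u \in U_{\mathcal{H}}$ satisfies $h_u = \mathcal{A} \cap u(\mathcal{A}) = \mathcal{H}$, and $u$ acts trivially on $\mathcal{H}$. Fix $x \in \mathcal{H}$ and let $d := \dist(x, \bd\mathcal{H})$; I want to show that the pointwise-fixed ball around $x$ under all of $U_{\mathcal{H}}$ has radius $\to \infty$ as $d \to \infty$. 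Since $U_{\mathcal{H}}$ already fixes $\mathcal{H}$ pointwise, the real content is controlling how $u$ behaves on chambers of $\Delta$ that are near $x$ but not contained in $\mathcal{A}$.

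The key step is to combine two facts. On the one hand, since $u$ fixes $\mathcal{H}$ pointwise and $x$ lies deep inside $\mathcal{H}$, any gallery from $x$ of length less than $d$ that stays inside $\mathcal{A}$ is fixed by $u$; so within the apartment there is nothing to prove. On the other hand, for chambers branching off $\mathcal{A}$, I would use the retraction $\rho = \rho_{\mathcal{A}, C}$ onto $\mathcal{A}$ based at a chamber $C$ deep in $\mathcal{H}$, together with the fact that $u$ fixes $\mathcal{H}$ pointwise and hence fixes the germ of every panel in the interior of $\mathcal{H}$. A panel $\pi$ of $\Delta$ whose projection to $\mathcal{A}$ lies in the interior of $\mathcal{H}$ and at distance $< d$ from $x$ has its entire residue fixed by $u$ — this is exactly the defining property of the spherical root group $U_h$ transported to the affine setting via Proposition~\ref{prop::root_groups_at_infinity}, because such a panel's star is combinatorially pinned down by $\mathcal{H}$. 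Propagating this outward by induction on gallery distance from $x$, one sees that the ball of radius roughly $d$ (or a fixed fraction of $d$, depending on the combinatorial geometry of the building) around $x$ is fixed pointwise by every $u \in U_{\mathcal{H}}$.

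The main obstacle I anticipate is making the phrase ``the star of every panel contained in $h \setminus \bd h$'' in Definition~\ref{def::root_group_sph} interact cleanly with the affine picture: one must verify that a panel of $\Delta$ sitting at bounded distance from $x$ inside $\mathcal{H}$, but away from $\bd\mathcal{H}$, really does correspond (under passing to the building at infinity, or under a suitable sector-germ argument) to a panel in $h \setminus \bd h$ of the spherical building $\bd\Delta$, so that $u \in U_h$ fixes its full star. Equivalently, one needs the quantitative statement that the ``wall'' $\bd\mathcal{H}$ is far from $x$ forces the local configuration at $x$ to look like the interior of the spherical root $h$ out to distance comparable to $d$. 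Once this translation is in place, the radius estimate is essentially bookkeeping; I would cite Bruhat--Tits~\cite[Prop.~7.4.33]{BT72} for the precise growth rate rather than re-deriving the constants, and present the apartment-level argument plus the retraction propagation as the conceptual content.
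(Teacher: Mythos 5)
There is a genuine gap, and it sits exactly at the step you flag as the ``key step''. The paper offers no proof of this proposition at all: it is quoted verbatim from Bruhat--Tits~\cite[Prop.~7.4.33]{BT72}, because the statement is \emph{not} a combinatorial consequence of the Moufang property at infinity together with Proposition~\ref{prop::root_groups_at_infinity}; it depends on the valued root datum of the algebraic group $G=\mathbb{G}(k)$, i.e.\ on the filtrations $U_{\alpha,r}$ of the root groups. Your central claim --- that a panel of $\Delta$ lying in the interior of $\mathcal{H}$ at distance $<d$ from $\bd\mathcal{H}$ has its entire residue fixed by $u\in U_{\mathcal{H}}$ because ``its star is combinatorially pinned down by $\mathcal{H}$'' --- is false. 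Definition~\ref{def::root_group_sph} only constrains the action of $u$ on stars of panels of the \emph{spherical} building $\bd\Delta$, i.e.\ on ideal panels; it says nothing about the residues of panels of the affine building $\Delta$ contained in $\mathcal{H}$, and fixing $\mathcal{H}$ pointwise does not force $u$ to fix the chambers of $\Delta$ branching off the interior of $\mathcal{H}$.

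A concrete counterexample shows the retraction/propagation argument cannot work with only the inputs you use. Take $\Delta=T$ a regular tree, $G=\Aut(T)$, $\mathcal{A}$ a line, $\mathcal{H}\subset\mathcal{A}$ a ray with base vertex $\bd\mathcal{H}$, and recall Examples~\ref{exm::trees_roots} and~\ref{exm::trees_affine_roots}: here $U_h=U^{+}_{a}$ and $U_{\mathcal{H}}=\{g\in U^{+}_{a}\;\vert\; h_g=\mathcal{H}\}\cup\{\id\}$. For any $d$, choose $g$ fixing the ray $\mathcal{H}$ pointwise, moving the edge of $\mathcal{A}$ at the base vertex into a third branch (so that $\mathcal{A}\cap g(\mathcal{A})=\mathcal{H}$ exactly), and also permuting nontrivially a branch hanging off the interior vertex of $\mathcal{H}$ at distance $d$ from $\bd\mathcal{H}$. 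Since $g$ moves only finitely much of the tree near those two vertices, it fixes balls of radius tending to infinity around points deep along $\mathcal{H}$, hence $g\in U^{+}_{a}$ and $g\in U_{\mathcal{H}}$; yet $g$ moves a vertex at distance $1$ from a point of $\mathcal{H}$ at distance $d$ from $\bd\mathcal{H}$, for every $d$. All the ingredients of your argument (Moufang boundary, Proposition~\ref{prop::root_groups_at_infinity}, retractions, gallery induction) are available in this setting, so the argument would ``prove'' a false statement; the algebraic hypothesis on $G$ is essential. Consequently the correct treatment here is the one the paper adopts --- simply cite Bruhat--Tits 7.4.33 for the whole statement, not merely for ``the precise growth rate'' --- and the apartment-plus-retraction discussion should be dropped rather than presented as the conceptual content.
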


From Propositions~\ref{prop::contr_prop} and~\ref{prop::root_groups_at_infinity} we immediately obtain:

\begin{corollary}
\label{cor::contr_groups}
Let $\mathbb{G}$ be a semi-simple algebraic group over a non Archimedean local field $k$ and let $G=\mathbb{G}(k)$ be the group of all $k$--rational points of $\mathbb{G}$. Denote by $\Delta$ the corresponding locally finite thick Euclidean building on which $G$ acts by type-preserving automorphisms and strongly transitively.   Let $\mathcal{A}$ be an apartment of $\Delta$ and let $h$ be a root of $\bd \mathcal{A}$. Let $\alpha=\{a_n\}_{n>0} \subset \Stab_{G}(\mathcal{A})$ with $a_n \to \infty$. Assume that $a_n(x_0) \to \xi \in h \setminus \bd h$, in the cone topology, where $x_0 \in \mathcal{A}$ is some special vertex.

Then $U_{h} \leq U_{\alpha}^{+}$. In particular, we obtain that $U_{h} \leq U_{a}^{+}$, for every hyperbolic element $a  \in \Stab_{G}(\mathcal{A})$ whose attracting endpoint $\xi_+ \in h \setminus \bd h$.
\end{corollary}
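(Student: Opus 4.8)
The plan is to deduce Corollary~\ref{cor::contr_groups} directly from Propositions~\ref{prop::contr_prop} and~\ref{prop::root_groups_at_infinity}, by showing that each affine root group $U_{\mathcal{H}}$ sitting ``deep enough'' towards $\xi$ is eventually contracted to the identity by conjugation by $a_n$. First I would fix $h$, a root of $\bd\mathcal{A}$, and the sequence $\alpha=\{a_n\}$ with $a_n(x_0)\to\xi\in h\setminus\bd h$ in the cone topology. Take an arbitrary $u\in U_h^{*}$; by Proposition~\ref{prop::root_groups_at_infinity}, $h_u:=\mathcal{A}\cap u(\mathcal{A})$ is a root of $\mathcal{A}$ with $h_u(\infty)=h$, and $u$ acts trivially on $h_u$. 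The goal is to check that $u\in U_\alpha^{+}$, i.e.\ $a_n^{-1}u\,a_n\to e$.

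The key step is a ``translation into balls'' argument. Since $a_n$ stabilizes $\mathcal{A}$ and $a_n(x_0)\to\xi$ with $\xi$ in the interior of the root $h$, the distance from $a_n(x_0)$ to the wall $\bd h_u$ (equivalently, to $\bd h$ measured inside $\mathcal{A}$) tends to infinity: the cone-topology convergence to an interior point of $h$ forces the footpoints $a_n(x_0)$ to march off towards $\xi$ and away from the bounding wall. By Proposition~\ref{prop::contr_prop} applied with $\mathcal{H}=h_u$ and $x=a_n(x_0)\in h_u$ (for $n$ large), the radius $R_n$ of the ball $B(a_n(x_0),R_n)$ fixed pointwise by $U_{h_u}\ni u$ satisfies $R_n\to\infty$. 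Conjugating, $a_n^{-1}u\,a_n$ fixes pointwise the ball $B(x_0,R_n)=a_n^{-1}\bigl(B(a_n(x_0),R_n)\bigr)$, whose radius tends to infinity; since $\Delta$ is locally finite, this means $a_n^{-1}u\,a_n$ agrees with the identity on larger and larger balls around $x_0$, hence $a_n^{-1}u\,a_n\to e$ in $\Aut(\Delta)$. Thus $u\in U_\alpha^{+}$, and as $u\in U_h^{*}$ was arbitrary, $U_h\leq U_\alpha^{+}$.

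For the ``in particular'' clause, if $a\in\Stab_G(\mathcal{A})$ is hyperbolic with attracting endpoint $\xi_+\in h\setminus\bd h$, apply the first part to the sequence $\alpha=\{a^{n}\}_{n>0}$: indeed $a^n\to\infty$ and $a^n(x_0)\to\xi_+$ in the cone topology, since the $a$-orbit of any point converges to the attracting endpoint of the hyperbolic isometry $a$. Then $U_\alpha^{+}=U_a^{+}$ by the notational convention of Definition~\ref{def::contr_groups}, giving $U_h\leq U_a^{+}$.

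The main obstacle I anticipate is the careful justification that $a_n(x_0)\to\xi\in h\setminus\bd h$ in the cone topology really does force $\dist(a_n(x_0),\bd h_u)\to\infty$ inside $\mathcal{A}$ --- i.e.\ that convergence to an \emph{interior} point of the root, rather than to a point on its boundary wall, is what makes Proposition~\ref{prop::contr_prop} applicable with radius going to infinity. This is an elementary fact about the geometry of a single Euclidean apartment and the cone topology (the ray from $x_0$ towards $\xi$ enters the open half-apartment and moves away from the supporting hyperplane at linear rate), but it must be stated precisely; everything else is a routine packaging of the two cited propositions together with local finiteness of $\Delta$.
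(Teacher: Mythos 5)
Your argument is correct and is essentially the paper's own proof: the paper likewise reduces to the affine root groups (writing $U_h$ as the union of the $U_{\mathcal{H}}$ with $\mathcal{H}(\infty)=h$, which is exactly your observation that $u\in U_{h_u}$ via Proposition~\ref{prop::root_groups_at_infinity}) and then applies Proposition~\ref{prop::contr_prop} to the points $a_n(x_0)$, whose distance to the bounding wall tends to infinity because $\xi$ lies in $h\setminus\bd h$, conjugating the resulting large fixed balls back to $x_0$. The geometric point you flag is also treated only briefly in the paper, so no essential difference remains.
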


\begin{proof}
As $U_h$ is the union of its corresponding affine root groups $U_{\mathcal{H}}$, it is enough to prove that $U_{\mathcal{H}} \leq U_{\alpha}^{+}$. Indeed, let $u \in U_{\mathcal{H}}$. As $a_n(x_0) \to \xi \in h \setminus \bd h$, in the cone topology, where $x_0$ is a special vertex of $\mathcal{A}$, we have that the intersection of the geodesic ray $[x_0, \xi)$ with the root $\mathcal{H}$ is a geodesic ray with endpoint $\xi$. Therefore, for every standard open neighborhood $V \subset \mathcal{H}$ of $\xi$ with respect to the cone topology of $\mathcal{A}$, there exists $N$ such that $a_n(x_0) \in V \subset \mathcal{H}$, for every $n \geq N$. By Proposition~\ref{prop::contr_prop}, we immediately obtain that $u \in U_{\alpha}^{+}$. 
\end{proof}

Another important property is recorded by the following corollary.

\begin{corollary}
\label{cor::equal_contr_root_groups}
Let $\mathbb{G}$ be a semi-simple algebraic group over a non Archimedean local field $k$ and let $G=\mathbb{G}(k)$ be the group of all $k$--rational points of $\mathbb{G}$. Denote by $\Delta$ the corresponding locally finite thick Euclidean building on which $G$ acts by type-preserving automorphisms and strongly transitively. Let $\mathcal{A}$ be an apartment of $\Delta$ and let $a \in \Stab_{G}(\mathcal{A})$ be a hyperbolic element whose attracting endpoint in $\bd \mathcal{A}$ is denoted by $\xi_+$. Let $\sigma$ be the unique simplex in $\bd \mathcal{A}$ such that $\xi_{+}$ is contained the interior of $\sigma$ and denote by $\St(\sigma)$ the star of $\sigma$ in $\bd \mathcal{A}$.

Then $U_{a}^{+}=  \langle U_h\; \vert \; h \text{ is a root of } \bd \mathcal{A} \text{ with } \St(\sigma) \subset h\rangle$. 
\end{corollary}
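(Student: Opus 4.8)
The plan is to prove the two inclusions between $U_{a}^{+}$ and the subgroup $\langle U_h \; \vert \; h \text{ is a root of } \bd \mathcal{A} \text{ with } \St(\sigma) \subset h \rangle$ separately, using the geometric translation provided by Corollary~\ref{cor::contr_groups} and Proposition~\ref{prop::root_groups_at_infinity} to pass between ``contraction in $G$'' and ``fixing large balls around the attracting ray''.

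\textbf{The inclusion $\supseteq$.} First I would observe that a root $h$ of $\bd \mathcal{A}$ satisfies $\St(\sigma) \subset h$ if and only if every chamber of $\bd\mathcal{A}$ containing $\sigma$ lies in $h$, which forces $\xi_+$ to lie in the interior $h \setminus \bd h$ of the half-apartment $h$: indeed $\xi_+$ is in the interior of $\sigma$, and $\sigma$ together with its whole star is contained in the open half-apartment. Hence for such an $h$, any sequence $\alpha = \{a^n\}_{n>0}$ has $a^n(x_0) \to \xi_+ \in h \setminus \bd h$ in the cone topology, so Corollary~\ref{cor::contr_groups} gives $U_h \leq U_{a}^{+}$. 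Since $U_{a}^{+}$ is a subgroup, the subgroup generated by all such $U_h$ is contained in $U_{a}^{+}$.

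\textbf{The inclusion $\subseteq$.} Here I would take $u \in U_{a}^{+}$, $u \neq \id$, and analyze it through the building at infinity. By Proposition~\ref{prop::geom_levi_decom} (or its consequences, since $U_{a}^{+} \leq G_{c_+}^{0} \leq G_\sigma \leq P_a^{+}$) the element $u$ fixes $\xi_+$, hence fixes $\sigma$, hence fixes $\St(\sigma)$ pointwise because $G$ is type-preserving and $\St(\sigma)$ is the set of simplices of $\bd\mathcal{A}$ incident to $\sigma$. The key geometric point is that because $\lim_n a^{-n} u a^n = e$, the automorphism $u$ fixes pointwise a half-apartment of $\bd\Delta$ whose boundary wall contains $\St(\sigma)$ on its closure; more precisely $u$ induces a nontrivial automorphism of $\bd\Delta$ fixing an apartment-neighborhood of $\xi_+$ inside $\bd\mathcal{A}$, and one should express $u$ as a product of root group elements $U_h$ with $\St(\sigma) \subset h$ using the $(B,N)$-pair / Moufang structure of $\bd\Delta$ — this is the standard fact that the pointwise stabilizer in the ``root datum direction'' of a large enough convex chunk of the spherical apartment is generated by the relevant root groups. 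Concretely one would use that $U_{a}^{+}$, being the contraction group of $a$, decomposes via the affine root groups $U_{\mathcal{H}}$ with $\mathcal{H}(\infty) = h$ for roots $h$ with $\xi_+ \in h \setminus \bd h$ (Corollary~\ref{coro::geom_levi_decom} together with Proposition~\ref{prop::root_groups_at_infinity}), and each such $h$ automatically has $\St(\sigma) \subset h$ since $h$ is a half-apartment of the spherical apartment $\bd\mathcal{A}$ whose open side contains $\xi_+$, hence contains the interior of $\sigma$, hence contains $\sigma$ and all chambers through $\sigma$.

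\textbf{Main obstacle.} The delicate step is the inclusion $\subseteq$: I need that $U_{a}^{+}$ is generated by (affine) root groups at all, and that those affine root groups sit inside the spherical root groups $U_h$ for $h$ on the ``attracting side'' of $\xi_+$. The first part follows from the Levi decomposition theory (Theorem~\ref{thm::levi_decom}, Corollary~\ref{coro::geom_levi_decom}) combined with Bruhat--Tits root-datum theory for $\Delta$, which tells us $U_{a}^{+}$ is the product, in a suitable order, of affine root groups $U_{\mathcal{H}}$ with $\mathcal{H}$ a root of $\Delta$ whose boundary ray points away from $\xi_+$ — equivalently $\xi_+ \in \mathcal{H}(\infty) \setminus \bd(\mathcal{H}(\infty))$. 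Identifying exactly which spherical roots $h = \mathcal{H}(\infty)$ occur, and checking the combinatorial equivalence ``$\xi_+ \in h \setminus \bd h$'' $\iff$ ``$\St(\sigma) \subset h$'' in a general (not necessarily irreducible) spherical Coxeter complex, is the real content; the reverse inclusion $\supseteq$ is essentially immediate from Corollary~\ref{cor::contr_groups}. I would therefore spend most of the write-up on carefully matching the index sets of roots on the two sides.
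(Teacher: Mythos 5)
Your overall strategy coincides with the paper's: the containment $\supseteq$ follows from Corollary~\ref{cor::contr_groups} once one checks that $\St(\sigma)\subset h$ forces $\xi_+\in h\setminus\bd h$, and the containment $\subseteq$ is delegated to the root group datum theory of the Moufang building at infinity (the paper's own proof is exactly this: combine Corollary~\ref{coro::geom_levi_decom} with Ronan, Thm.~6.18). Your combinatorial equivalence ``$\St(\sigma)\subset h$ iff $\xi_+\in h\setminus\bd h$'' is correct and elementary: a chamber of $\bd\mathcal{A}$ lies either in $h$ or in the opposite root, so if $\sigma\subset h$ and $\sigma\not\subset\bd h$ then every chamber containing $\sigma$ lies in $h$; conversely, if $\sigma\subset\bd h$ then $\sigma$ is a face of a panel of the wall, and the two chambers through that panel put chambers of $\St(\sigma)$ on both sides. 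So the easy inclusion and the matching of index sets, which you flag as the ``real content,'' are fine and are not where the difficulty sits.

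The genuine gap is in your inclusion $\subseteq$. You assert that $U_a^{+}$ ``decomposes via the affine root groups $U_{\mathcal{H}}$'' and attribute this to Corollary~\ref{coro::geom_levi_decom} together with Proposition~\ref{prop::root_groups_at_infinity}, but neither result yields anything of the sort: Corollary~\ref{coro::geom_levi_decom} only gives the factorization $G_{\sigma}=U^{+}_{a}(M_a\cap G_{\sigma})$ with $U^{+}_{a}$ normal, and says nothing about $U^{+}_{a}$ being generated by root groups, while Proposition~\ref{prop::root_groups_at_infinity} only converts a single element of a spherical root group $U_h$ into an affine root it fixes (it goes from root group elements to contraction-type behaviour, i.e.\ the direction of $\supseteq$, not back). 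The statement that every element of the contraction group $U^{+}_{a}$ is a product of elements of the $U_h$ with $\St(\sigma)\subset h$ is precisely the nontrivial input of the corollary, and it must be imported from the root group datum (with valuation) attached to $G$ --- this is what the paper cites from Ronan~[Thm.~6.18], equivalently the Bruhat--Tits fact that the unipotent radical of the parabolic determined by $\xi_+$ is directly spanned by the relative root groups positive on the corresponding direction. Your appeal to ``Bruhat--Tits root-datum theory'' gestures at the right source, but as written the concrete derivation you propose from the paper's internal lemmas would fail; you need to cite the external generation theorem explicitly, exactly as the paper does. A smaller inaccuracy: your claim that $u\in U^{+}_{a}$ fixes $\St(\sigma)$ pointwise ``because $G$ is type-preserving'' is unjustified --- a type-preserving element of $G_{\sigma}$ can permute the chambers of $\bd\mathcal{A}$ containing $\sigma$ --- though this remark is not load-bearing in your outline.
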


\begin{proof}
The conclusion follows by combining Corollary~\ref{coro::geom_levi_decom} with the theory of root group datum from Ronan~\cite[Thm.~6.18]{Ron89}.
\end{proof}

The analogue of Lemma~\ref{lem::existence_hyp_element} is given by the following proposition.
\begin{proposition}
\label{prop::lem_analog}
Let $\mathbb{G}$ be a semi-simple algebraic group over a non Archimedean local field $k$ and let $G=\mathbb{G}(k)$ be the group of all $k$--rational points of $\mathbb{G}$. Denote by $\Delta$ the corresponding locally finite thick Euclidean building on which $G$ acts by type-preserving automorphisms and strongly transitively.  Let $\mathcal{A}$ be the apartment that corresponds to the abelian sub semi-group $A^{+}$ of the Weyl group $W$ of $G$.

Let $\alpha=\{a_n\}_{n>0} \subset A^{+}$ with $a_n \to \infty$ and such that $a_{n}(x_0) \to \xi \in \bd \mathcal{A}$, for some special vertex $x_0 \in \mathcal{A}$. Then there exists a hyperbolic element $b \in \Stab_{G}(\mathcal{A})$ such that $U^{\pm}_{b} \leq U^{\pm}_{\alpha}$.
\end{proposition}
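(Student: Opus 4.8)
The plan is to reduce the statement to a problem about the sequence $\{a_n(x_0)\}_{n>0}$ of special vertices converging to $\xi \in \bd \mathcal{A}$, and to produce a single hyperbolic element $b$ whose attracting and repelling endpoints "see" at least as many roots at infinity as the sequence $\alpha$ does. First I would analyze which roots $h$ of $\bd\mathcal{A}$ contain $\xi$ in their interior, i.e. $\xi \in h \setminus \bd h$; by Corollary~\ref{cor::contr_groups}, for every such $h$ one has $U_h \leq U^{+}_{\alpha}$. So the goal becomes: choose a hyperbolic element $b \in \Stab_G(\mathcal{A})$ whose attracting endpoint $\eta_+$ lies in the interior of precisely those roots $h$ of $\bd\mathcal{A}$ that contain $\xi$ in their interior (and symmetrically for the repelling endpoint and $U^{-}$). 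Since $A^{+}$ consists of translations mapping $x_0$ into the sector $Q$, the point $\xi = \lim a_n(x_0)$ lies in the closure of $\bd Q$; let $\sigma$ be the unique simplex of $\bd\mathcal{A}$ with $\xi$ in its interior, and let $\St(\sigma)$ be its star. The roots $h$ of $\bd\mathcal{A}$ with $\xi \in h \setminus \bd h$ are exactly those with $\St(\sigma) \subset h$. By Corollary~\ref{cor::equal_contr_root_groups}, the subgroup generated by all these $U_h$ is precisely $U^{+}_{b}$ for any hyperbolic $b \in \Stab_G(\mathcal{A})$ whose attracting endpoint lies in the interior of $\sigma$.

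So the key steps, in order, would be: (1) using the structure of $A^{+}$ (products of positive powers of a basis $\gamma_1,\dots,\gamma_m$ all translating $x_0$ into $Q$), verify that $\xi = \lim a_n(x_0)$ lies in the closed sector $\bd Q$ and extract the simplex $\sigma$ of $\bd\mathcal{A}$ whose interior contains $\xi$; (2) construct a hyperbolic element $b \in \Stab_G(\mathcal{A})$ — e.g. lift a translation of $\mathcal{A}$ deep inside the open sector based at $x_0$ pointing "toward" $\sigma$, or more precisely a translation in the direction of a point in the interior of $\sigma$ — so that $b$'s attracting endpoint $\eta_+$ lies in the interior of $\sigma$; (3) apply Corollary~\ref{cor::contr_groups} to get $U_h \leq U^{+}_{\alpha}$ for every root $h$ of $\bd\mathcal{A}$ with $\xi \in h\setminus\bd h$, equivalently with $\St(\sigma)\subset h$; (4) apply Corollary~\ref{cor::equal_contr_root_groups} to identify $U^{+}_{b}$ with the group generated by exactly these $U_h$, giving $U^{+}_{b} \leq U^{+}_{\alpha}$; (5) run the mirror-image argument with $a_n^{-1}$ — noting $\{a_n^{-1}\}$ translates $x_0$ into the opposite sector and $a_n^{-1}(x_0) \to \xi_-$ lies in the star of the opposite simplex $-\sigma$, which is where $b$'s repelling endpoint lives — to get $U^{-}_{b} \leq U^{-}_{\alpha}$.

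A subtlety to handle carefully is that one must check $b$ can be taken in $\Stab_G(\mathcal{A})$ and genuinely hyperbolic with axis in $\mathcal{A}$: this is available because $A < W$ lifts (the abelian group $A$ is the restriction to $\mathcal{A}$ of a maximal split torus of $\mathbb{G}$, which is abelian — see Remark~\ref{rem::algabraic_groups_local_fields}), so any translation automorphism of $\mathcal{A}$ lying in $A$ lifts to a hyperbolic element of $\Stab_G(\mathcal{A})$, and one just needs the translation vector to point into the interior of $\sigma$, which is possible as long as $\sigma$ is nonempty (true since $\xi \in \bd\mathcal{A}$). Another point requiring a short argument is that $\xi$ really does lie in the interior of a well-defined simplex $\sigma$ of $\bd\mathcal{A}$ that is "visible" from the sector $Q$; this follows from $a_n(x_0) \in Q$ for all $n$ together with $Q$ being a union of closed faces of $\bd\mathcal{A}$ pulled back, so the limit $\xi$ lies in $\bd Q$ and hence in the interior of one of its faces.

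The main obstacle I expect is step (2)–(4): matching up, on the nose, the collection of root groups $U_h$ with $\St(\sigma)\subset h$ that appear in $U^{+}_{\alpha}$ (via Corollary~\ref{cor::contr_groups}) with the collection that equals $U^{+}_{b}$ (via Corollary~\ref{cor::equal_contr_root_groups}) — i.e. verifying that the simplex $\sigma$ associated to the limit point $\xi$ of the sequence is the \emph{same} simplex in whose interior one places the attracting endpoint of $b$. If the limit $\xi$ happens to lie in the interior of a lower-dimensional face (so $\sigma$ is not a chamber), then $U^{+}_{\alpha}$ genuinely contains more than $U^{+}_{b}$ for a strongly regular $b$, and the whole point is that we must choose $b$ with \emph{exactly} that degree of regularity; making this precise is where the bookkeeping with the root system and the star $\St(\sigma)$ has to be done honestly rather than waved through.
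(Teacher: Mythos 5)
Your proposal is correct and follows essentially the same route as the paper's proof: identify the simplex $\sigma$ of $\bd\mathcal{A}$ containing $\xi$ in its interior, produce a hyperbolic $b \in \Stab_G(\mathcal{A})$ whose attracting endpoint lies in the interior of $\sigma$ (the paper takes $b$ to be the product of those basis translations $\gamma_{i_1},\dots,\gamma_{i_l}$ of $A^{+}$ whose attracting endpoints span $\sigma$, which is just a concrete version of your lift), and then combine Corollary~\ref{cor::equal_contr_root_groups} with Corollary~\ref{cor::contr_groups}, with the symmetric argument for $U^{-}$. The "matching on the nose" you flag is exactly what these two corollaries accomplish, so there is no gap.
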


\begin{proof}
Let $x_0$ be a fixed special vertex in the apartment $\mathcal{A}$ and let $\{\gamma_1,\cdots, \gamma_m\}$ be the basis of $A^{+}$ described in the beginning of Section~\ref{subsubsec::Polar_decom}. Moreover, we can choose the basis $\{\gamma_1,\cdots, \gamma_m\}$ such that the attracting endpoint of $\gamma_j$, for every $j \in \{1, \cdots, m\}$, is a vertex in $\bd \mathcal{A}$ (i.e., it is a vertex of the chamber at infinity determined by the sector that defines $A^{+}$). Let $\sigma$ be the unique simplex of $\bd \mathcal{A}$ that contains $\xi$ in its interior ($\xi = \sigma$ if and only if $\xi$ is a vertex in $\bd \mathcal{A}$). Let  $\{\gamma_{i_1},\cdots, \gamma_{i_l}\}$ be the set of the elements of $\{\gamma_1,\cdots, \gamma_m\}$ whose attracting endpoints determine the simplex $\sigma$.  

Define $b:= \gamma_{i_1}\cdot... \cdot \gamma_{i_l}$ and notice that $b$ is a hyperbolic element in $\Stab_{G}(\mathcal{A})$. Moreover, the attracting endpoint $\xi_{+}$ of $b$ and the point $\xi$ are contained in the interior of the simplex $\sigma$ of $\bd \mathcal{A}$.
 
Apply Corollary~\ref{cor::equal_contr_root_groups} to $b$ and $\sigma$. Then, by Corollary~\ref{cor::contr_groups} we obtain that $U^{\pm}_{b} \leq U^{\pm}_{\alpha}$. 
\end{proof}

\begin{proof}[Proof of Theorem~\ref{thm::H-M-buildings}]
By Lemma~\ref{lem::sep_loc_compact} and Remark~\ref{rem::exam_sep_loc_com} it is enough to verify the Howe--Moore property only for separable Hilbert spaces. 

Let $G$ be an isotropic simple algebraic group over a non Archimedean local field or a closed, topologically simple subgroup of $\Aut(T)$ that acts $2$--transitively on the boundary $\partial T$, where $T$ is a bi-regular tree with valence $ \geq 3$ at every vertex. For such $G$, we verify the two conditions of Theorem~\ref{Crit_H_M}. In both cases, the first condition of Theorem~\ref{Crit_H_M} is verified by applying Lemma~\ref{lem::polar_decom} to $G$.

Let us verify the second condition of Theorem~\ref{Crit_H_M}. Notice that, in the case of a bi-regular tree, the corresponding group $A^{+}$ of $G$, given by the polar decomposition of $G$, is generated by a single hyperbolic element $a \in G$ (see Example~\ref{exam::trees_B_M}). Therefore, the second condition follows immediately from Corollary~\ref{coro::gen_parab_subgroups} applied to $G$. 

When $G$ is an isotropic simple algebraic group over a non Archimedean local field, to obtain the Howe--Moore property, it is enough to verify the second condition of Theorem~\ref{Crit_H_M} only for a sequence $\alpha=\{a_n\}_{n>0} \subset A^{+}$ with $a_n \to \infty$ and such that $a_{n}(x_0) \to \xi \in \bd \mathcal{A}$, for some special vertex $x_0 \in \mathcal{A}$. The latter assertion follows from the fact that $\bd \Delta \cup \Delta$ is compact with respect to the cone topology.
Then, apply to $\alpha=\{a_n\}_{n>0}$ Proposition~\ref{prop::lem_analog} and Corollary~\ref{coro::gen_parab_subgroups}. Notice that, for every hyperbolic element $b \in \Stab_G(\mathcal{A})$, the group $\overline{\langle U^{+}_{b},U^{-}_{b} \rangle}$, which is normal in $G$, acts non-trivially on $\Delta$. By Tits~\cite[Main Theorem]{Ti64}, we obtain that for every hyperbolic element $b \in \Stab_G(\mathcal{A})$, the group  $\overline{\langle U^{+}_{b},U^{-}_{b} \rangle}$ equals $G$. The theorem stands proven.
\end{proof}

\bigskip

\noindent
{\bf Acknowledgements.} \quad
This article is part of author's PhD project conducted under the supervision of Pierre-Emmanuel Caprace. I would like to thank him for valuable conversations and support during this project. My  thanks go also to Mihai Berbec and Steven Deprez for interesting discussions, in the early stage of this work, on von Neumann algebras, as well as providing the elegant proof of Lemma~\ref{vN_normal} and the Example~\ref{exa::not_normal}. I also thank the referee for his/her useful comments.

\begin{bibdiv}
\begin{biblist}

\bib{AB}{book}{
   author={Abramenko, Peter},
   author={Brown, Kenneth S.},
   title={Buildings},
   series={Graduate Texts in Mathematics},
   volume={248},
   note={Theory and applications},
   publisher={Springer},
   place={New York},
   date={2008},
}

\bib{BW04}{article}{
author={Baumgartner, Udo },
author={Willis, George A.},
title={Contraction groups and scales of automorphisms of totally disconnected locally compact groups},
journal={Israel Journal of Mathematics},
year={2004},
issn={0021-2172},
volume={142},
number={1},
doi={10.1007/BF02771534},
url={http://dx.doi.org/10.1007/BF02771534},
publisher={Springer-Verlag},
pages={221-248},
language={English}
}

\bib{BHV}{book}{
author = {Bekka, Bachir},
author={de la Harpe, Pierre}, 
 author={Valette, Alain},
title = {Kazhdan's Property (T)},
publisher = {New Mathematical Monographs, Cambridge University Press},
volume={11}
year = {2008},
}

\bib{BM}{book}{
   author={Bekka, M. Bachir},
   author={Mayer, Matthias},
   title={Ergodic theory and topological dynamics of group actions on homogeneous spaces},
   series={London Mathematical Society lecture note series},
   volume={269},
   publisher={Cambridge, U.K.; New York: Cambridge University Press},
   date={2000},
}

\bib{BT72}{article}{
   author={Bruhat, F.},
   author={Tits, J.},
   title={Groupes r\'educatifs sur un corps local: I. Donn\'ees radicelles valu\'ees},
   journal={Inst. Hautes \'Etudes Sci. Publ. Math.},
   number={41},
   date={1972},
   pages={5--251},
}

\bib{BM00a}{article}{
   author={Burger, Marc},
   author={Mozes, Shahar},
   title={Groups acting on trees: from local to global structure},
   journal={Inst. Hautes \'Etudes Sci. Publ. Math.},
   number={92},
   date={2000},
   pages={113--150 (2001)},
}

\bib{BM00b}{article}{
   author={Burger, Marc},
   author={Mozes, Shahar},
   title={Lattices in products of trees},
   journal={Inst. Hautes \'Etudes Sci. Publ. Math.},
   number={92},
   date={2000},
   pages={151--194 (2001)},
}

\bib{CaCi}{unpublished}{
  author={Caprace, P-E.},
   author={Ciobotaru, C.},
   title={Gelfand pairs and strong transitivity for Euclidean buildings},
 note={arXiv:1304.6210, to appear in `Ergodic Theory and Dynamical Systems'},
   doi={},
}

\bib{CaDM}{article}{
 author={Caprace, P-E.},
 author={De Medts, T.},
 title={Trees, contraction groups, and Moufang sets},
 journal={Duke Math. J.},
 number= {162}, 
 date={2013},
 pages={2413--2449},
 }

\bib{CCL+}{article}{
   author={Cluckers, Raf},
   author={de Cornulier, Yves},
   author={Louvet, Nicolas},
   author={Tessera, Romain},
   author={Valette, Alain},
   title={The Howe-Moore property for real and $p$-adic groups},
   journal={Math. Scand.},
   volume={109},
   date={2011},
   number={2},
   pages={201--224},
   issn={0025-5521},
   review={\MR{2854688 (2012m:22008)}},
}

\bib{FigaNebbia}{book}{
   author={Fig{\`a}-Talamanca, Alessandro},
   author={Nebbia, Claudio},
   title={Harmonic analysis and representation theory for groups acting on
   homogeneous trees},
   series={London Mathematical Society Lecture Note Series},
   volume={162},
   publisher={Cambridge University Press},
   place={Cambridge},
   date={1991},
}

\bib{Gar97}{book}{
  author={Garrett, P.},
  title={Buildings and Classical Groups},
    publisher={Chapman and Hall},
  date={1997},
}

\bib{HM79}{article}{
  author={Howe, Roger E.},
   author={Moore, Calvin C.},
   title={Asymptotic properties of unitary representations},
   pages={72--96},
   journal={Journal of Functional Analysis},
   volume={32}
   year={1979},
  }

\bib{LM92}{article}{
  author={Lubozky, A.},
   author={Mozes, Sh.},
   title={Asymptotic properties of unitary representations of tree automorphisms},
   pages={289--298},
   journal={In: "Harmonic analysis and discrete potential theory" (Frascati, 1991)},
   year={1992},
   note={Plenum, New York},
}

\bib{Mar91}{book}{
   author={Margulis, G. A.},
   title={Discrete subgroups of semisimple Lie groups},
   series={Ergebnisse der Mathematik und ihrer Grenzgebiete (3) [Results in
   Mathematics and Related Areas (3)]},
   volume={17},
   publisher={Springer-Verlag},
   place={Berlin},
   date={1991},
   pages={x+388},
   isbn={3-540-12179-X},
   review={\MR{1090825 (92h:22021)}},
}

\bib{PR94}{book}{
   author={Platonov, Vladimir},
   author={Rapinchuk, Andrei},
   title={Algebraic groups and number theory},
   series={Pure and Applied Mathematics},
   volume={139},
   note={Translated from the 1991 Russian original by Rachel Rowen},
   publisher={Academic Press, Inc., Boston, MA},
   date={1994},
   pages={xii+614},
   isbn={0-12-558180-7},
   review={\MR{1278263 (95b:11039)}},
}

\bib{Ron89}{book}{
  author={Ronan, M.},
  title={Lectures on Buildings},
    publisher={Academic Press, INC. Harcourt Brace Jovanovich, Publishers},
  date={1989 },
}

\bib{Ti70}{article}{
   author={Tits, Jacques},
   title={Sur le groupe des automorphismes d'un arbre},
   language={French},
   conference={
      title={Essays on topology and related topics (M\'emoires d\'edi\'es
      \`a Georges de Rham)},
   },
   book={
      publisher={Springer},
      place={New York},
   },
   date={1970},
   pages={188--211},
}

\bib{Ti64}{article}{
author={Tits, Jacques},
  title={Algebraic and Abstract Simple Groups},
   journal={Annals of Mathematics, Second Series,},
   volume={Vol. 80},
number={2}
   pages={313--329},
  date={1964},
}

\bib{Weiss}{book}{
     title = {The Structure of Affine Buildings. (AM-168)},     
     author = {Weiss, Richard M.},
     series = {Annals of Mathematics Studies},  
     pages = {392},         
     url = {http://www.jstor.org/stable/j.ctt7s9s3},
     EISBN = {978-1-4008-2905-7},       
     language = {English},     
     year = {2008},      
     publisher = {Princeton University Press},         
     }

\bib{WM}{unpublished}{
  author={Witte Morris, Dave},
   title={Introduction to Arithmetic Groups},
 note={preprint 2012, arXiv:math/0106063},
   doi={},
}

\bib{Zim84}{book}{
   author={Zimmer, Robert J.},
   title={Ergodic Theory and Semisimple Groups},
   series={Monographs in Mathematics},
   volume={81},
   publisher={Birkh\"{a}user Basel},
   date={1984},
}

\end{biblist}
\end{bibdiv}

\end{document}